\documentclass[reqno,10pt]{amsart}
\usepackage{CJK}
\usepackage{hyperref}
\usepackage{cite}
\usepackage{amsmath}
\usepackage{mathrsfs}
\usepackage{extarrows}
\usepackage{indentfirst}
\usepackage{color}

\makeatletter
\@namedef{subjclassname@2020}{%
	\textup{2020} Mathematics Subject Classification}
\makeatother

\textheight 20.0cm \textwidth 14.0cm
\numberwithin{equation}{section}

\newtheorem{theorem}{Theorem}[section]
\newtheorem{lemma}[theorem]{Lemma}
\newtheorem{definition}[theorem]{Definition}

\newtheorem{proposition}[theorem]{Proposition}
\newtheorem{remark}[theorem]{Remark}
\newtheorem{corollary}[theorem]{Corollary}

\allowdisplaybreaks

\begin{document}
	
	\title[\hfil Sobolev regularity for the perturbed fractional 1-Laplacian] {Sobolev regularity for the perturbed fractional 1-Laplace equations in the subquadratic case}
	
		\author[D. Li and C. Zhang  \hfil \hfilneg]
		{Dingding Li  and Chao Zhang$^*$}
	
	\thanks{$^*$ Corresponding author.}
	
	\address{Dingding Li \hfill\break School of Mathematics, Harbin Institute of Technology, Harbin 150001, China}
	\email{a87076322@163.com}

	\address{Chao Zhang  \hfill\break School of Mathematics and Institute for Advanced Study in Mathematics, Harbin Institute of Technology, Harbin 150001, China}
	\email{czhangmath@hit.edu.cn}

	\subjclass[2020]{35B65, 35D30, 35J60, 35R11}
	\keywords{Sobolev regularity; perturbed 1-Laplacian; nonhomogeneous growth; finite difference quotients}
	
	\maketitle
	
\begin{abstract}
This work investigates the Sobolev regularity of solutions to perturbed fractional 1-Laplace equations.
Under the assumption that weak solutions are locally bounded, we establish that the regularity properties are analogous to those observed in the superquadratic case. By introducing the threshold $\frac{p-1}{p}$, we divide the range of the parameter  $s_p$ into two distinct scenarios. Specifically, for any $s_p\in \left(0, \frac{p-1}{p}\right]$ and $q\ge p$, we demonstrate that the solutions possess $W_{\rm loc}^{\gamma, q}$-regularity for all $\gamma\in \left(0, \frac{s_p p}{p-1}\right)$ and  the $W_{\rm loc}^{1, q}$-regularity for any $s_p\in \left(\frac{p-1}{p}, 1\right)$ and $q\ge p$, respectively.
Our analysis relies on the nonlocal finite-difference quotient method combined with a Moser-type iteration scheme, which provides a systematic approach to the regularity theory for such nonlocal and singular problems.
\end{abstract}

\section{Introduction}
Let $\Omega$ be a bounded domain in $\mathbb{R}^N$ with $N\ge2$. The main focus of this paper is to study the Sobolev regularity of weak solutions to the following perturbed fractional $1$-Laplace equation
\begin{align}
	\label{1.1}
	(-\Delta_1)^{s_1}u+(-\Delta_p)^{s_p}u=0\quad\text{in }\Omega,
\end{align}
where $s_1, s_p\in (0, 1)$ and $p\in(1, 2)$. The fractional operators $(-\Delta_1)^{s_1}$ and $(-\Delta_p)^{s_p}$ are  defined by
\begin{align*}
	(-\Delta_1)^{s_1}u(x):=2\mathrm{P.V.}\int_{\mathbb{R}^N}\frac{u(x)-u(y)}{|u(x)-u(y)|}\frac{dy}{|x-y|^{N+s_1}}
\end{align*}
and
\begin{align*}
	(-\Delta_p)^{s_p}u(x):=2\mathrm{P.V.}\int_{\mathbb{R}^N}\frac{|u(x)-u(y)|^{p-2}(u(x)-u(y))}{|x-y|^{N+s_p p}}\,dy
\end{align*}
for $x\in \Omega$. Additionally, $\mathrm{P.V.}$ denotes the integral taken in the principal value sense. 

The problem \eqref{1.1} was first proposed by G\'{o}rny, Maz\'{o}n and Toledo in their recent work \cite{GMT24}, in which they investigated several phenomena of nonlocal PDEs with inhomogeneous growth within the general framework of random walk spaces. Due to the associated functionals of these $(1, p)$-Laplace equations exhibiting different growth on their respective structures, Eq. \eqref{1.1} can be regarded as a special case of $(p, q)$-growth problems. For the superlinear case $p>1$, several significant results have been established; see the pioneering studies \cite{M89, M91}, subsequent advances \cite{BM20, CMM24, M21, MR21, DM2301}, and the references therein.

Let us first focus on the classical version of problem \eqref{1.1}, namely the perturbed 1-Laplace equation
\begin{align}
	\label{1p}
	-\mathrm{div}\left( \frac{\nabla u}{|\nabla u|}\right)-\mathrm{div}\left( |\nabla u|^{p-2}\nabla u \right) =f\quad\text{in }\Omega.
\end{align}
This equation arises in both fluid mechanics \cite{DL76} and materials science \cite{S93}. Further assuming that the weak solutions to \eqref{1p} are convex, Giga and Tsubouchi \cite{GT22} proved that the gradient of solutions is continuous. Shortly thereafter, Tsubouchi removed the convexity condition and established the interior $C^1$-regularity in \cite{T24} using De Giorgi’s truncation method together with the freezing-coefficient technique. It is also worth noting that these methods were subsequently extended to show that $Du$ is continuous with respect to $(x, t)$ for the parabolic $(1,  p)$-Laplace system \cite{T25, T24A, T25A}. Related problems have also been studied in more specialized settings; see, for example, \cite{G23, MRST10} for results on the anisotropic case involving the $p$-Laplacian, where the operator exhibits linear growth in some of the coordinates. Moreover, in sharp contrast to that in the $(1, p)$-growth regime, De Filippis and Mingione in \cite{DM23} considered the functional with nearly linear growth
\begin{align}
	\label{nearly}
	\mathcal{L}(w, \Omega):=\int_{\Omega}\left[c(x)|Dw|\log(1+|Dw|)+a(x)|Dw|^q\right]\,dx, \quad q>1,
\end{align}
and established the Schauder-type estimates for its minimizers. 

%Under suitable assumptions on the coefficients, their results further yield $C^{1, \alpha}$-regularity for the corresponding minimizers.

%To the best of our knowledge, the previous results provide only interior $C^1$-regularity of weak solutions to problem \eqref{1p}. 

When addressing fractional problems involving the 1-Laplacian, several key features have been summarized in \cite[Subsection 1.1]{LZ25} as follows:

\begin{itemize}
	\item In the weak formulation, the nonlocal quotient $\frac{u(x)-u(y)}{|u(x)-u(y)|}$ is represented by a function $Z$, where $Z\in \mathrm{sgn}(u(x)-u(y))$. Here $\mathrm{sgn}(\cdot)$ denotes the set-valued sign function with $\mathrm{sgn}(0)=[-1, 1]$.
	
	\item The energy of weak solutions to elliptic nonlocal 1-Laplace equations associated with the source term $f$ can be discontinuous. Specifically, if $f\in L^\frac{N}{s}(\Omega)$ and satisfies a smallness condition, the energy of weak solutions remains bounded. However, if $\|f\|_{L^\frac{N}{s}(\Omega)}$ is sufficiently large, the energy becomes unbounded and a weak solution may fail to exist.
	
	\item Any $s$-minimal functions are continuous in the interior of the ambient domain and also continuous up to the boundary under some mild additional
	hypothesis. 
	
	\item In two dimensions, for a nonlocal variational problem arising from an image-denoising model, its minimizers preserve the same local H\"{o}lder regularity as the original image.
\end{itemize}
For more detailed results on the 1-Laplacian, see \cite{MRT16, MPRT16, AMRT08, AMRT09, MRT19, B23, BDLM23, BDLV25, NO23},
in which the distinctive properties are thoroughly discussed. These characteristics of the 1-Laplacian ultimately originate from the non-strict convexity of the corresponding space.

Next, we turn to our previous work \cite{LZ25}, where we considered the superquadratic case $p\ge 2$ and distinguished two ranges for $s_p$:
\begin{align*}
	s_p\in \left( 0,\frac{p-1}{p}\right]  \quad\text{and}\quad s_p\in \left( \frac{p-1}{p}, 1\right) .
\end{align*}
From a technical perspective, leveraging the development of the finite-difference quotient technique in the fractional setting (see \cite{BL17, BLS18, BDL2401, BDL2402, FLZ25, DKLN23, GL24}), we established the local Sobolev regularity of weak solutions to \eqref{1.1} in the supercritical case. For background on this technique, we refer the readers to \cite{L19}, which outlines its fundamental applications and historical context. It is noteworthy that the critical threshold $\frac{p-1}{p}$ characterizes the existence of $\nabla u$, differing from that of the fractional $p$-Laplacian, where the threshold is $\frac{p-2}{p}$ (see \cite{BDL2402}). Our results indicate that the involvement of the 1-structure raises this threshold.

Inspired by the aforementioned literature, we in this paper investigate the regularity of weak solutions to \eqref{1.1} in the subquadratic case. Our conclusions align closely with those in \cite{LZ25}; when the 1-structure is omitted, our results further consistent those in \cite{GL24}. However, under certain conditions,  the presence of the 1-structure can prevent the existence of $\nabla u$, in contrast to the fractional $p$-Laplacian setting, where the gradient always exists (see \cite{BDL2401}). This distinction highlights the impact of the 1-growth term on such problems. We further observe that, unlike in the superquadratic case, the local regularity estimates for weak solutions in the subquadratic regime depend on more parameters. This is primarily due to the fact that we adopt a different approach to derive the energy estimate of weak solutions, see Proposition \ref{pro9} for detailed arguments.

Before presenting our main results, we first provide the definition of a weak solution to \eqref{1.1}. The tail space $L^{p-1}_{s_pp}(\mathbb{R}^N)$ contains all function $w\in L^p_{\mathrm{loc}}(\mathbb{R}^N)$ that satisfies
\begin{align*}
	\int_{\mathbb{R}^N}\frac{|w|^{p-1}}{(1+|x|)^{N+s_pp}}\,dx<+\infty.
\end{align*}

\begin{definition}
	\label{def1}
	A function $u\in W^{s_1,1}_{\mathrm{loc}}(\Omega)\cap W^{s_p,p}_{\mathrm{loc}}(\Omega)\cap L^{p-1}_{s_pp}(\mathbb{R}^N)$ is said to be a weak solution to problem \eqref{1.1} if
	\begin{itemize}
		\item [(i)] there exists a function $Z\in L^\infty(\mathbb{R}^N\times\mathbb{R}^N)$ satisfying $Z\in \mathrm{sgn}(u(x)-u(y))$;
		\item [(ii)] for any $\varphi\in W^{s_1, 1}_{0}(\Omega)\cap W^{s_p, p}_{0}(\Omega)$, there holds
		\begin{align}
			\label{1.2}
			0&=\int_{\mathbb{R}^N}\int_{\mathbb{R}^N}Z\frac{\varphi(x)-\varphi(y)}{|x-y|^{N+s_1}}\,dxdy\nonumber\\
			&\quad+\int_{\mathbb{R}^N}\int_{\mathbb{R}^N}\frac{|u(x)-u(y)|^{p-2}(u(x)-u(y))(\varphi(x)-\varphi(y))}{|x-y|^{N+s_pp}}\,dxdy.
		\end{align}
	\end{itemize}
\end{definition}

\begin{remark}
	\label{rem0}
	No additional decay assumption is required for the 1-growth term because the integral
	\begin{align}
		\label{assume1}
		\int_{\mathbb{R}^N \setminus B_R(x_0)} \frac{dy}{|x - x_0|^{N + s_1}}
	\end{align}
	is finite.
	Moreover, the constant “1” appearing in our main results follows directly from \eqref{assume1}.
	The existence of weak solutions to problem \eqref{1.1} can be established via the method developed in \cite{B23}, and the local boundedness of weak solutions has been proved in \cite[Section~3]{LZ25}.
\end{remark}

Now, we state our main results. Define
\begin{align*}
	\mathcal{T}:=\|u\|_{L^\infty(B_{R}(x_0))}+\mathrm{Tail}(u;x_0,R),
\end{align*}
where $\operatorname{Tail}(u; x_0, R)$ denotes the tail term associated with the function $u$ at the point $x_0$ and radius $R$ (see Section~\ref{sec2} for details).

\begin{theorem}
	\label{th12}
	Let $p\in (1, 2)$ and $s_p\in \left(0, \frac{p-1}{p}\right]$. Suppose that $u$ is a locally bounded weak solution to problem \eqref{1.1}. Then, we have 
	\begin{align*}
		u \in W_{\mathrm{loc}}^{\sigma, q}(\Omega)
	\end{align*}
	for any $q\geq p$ and $s_p\leq \sigma<\frac{s_pp}{p-1}$. Moreover, there exist constants 
	$C$ and $\kappa$ depending on $N, p, q, s_1, s_p, \sigma$ 
	such that for any ball $B_R \equiv B_R\left(x_0\right) \subset \subset \Omega$ with $R\in (0,1)$ and $r\in (0,R)$,
	\begin{align*}
		[u]_{W^{\sigma,q}\left(B_r\right)}^q \leq \frac{C\left(\mathcal{T}+[u]_{W^{s_p,p}\left(B_R\right)}+1\right)^q}{(R-r)^{\kappa}}.
	\end{align*}
\end{theorem}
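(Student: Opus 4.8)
The plan is to combine the nonlocal finite-difference quotient technique with a Moser-type iteration, adapting the energy estimate of Proposition \ref{pro9} to the subquadratic range $p\in(1,2)$. For a small increment $h\in\mathbb{R}^N$ write $\tau_h u(x)=u(x+h)-u(x)$. The starting point is to test the weak formulation \eqref{1.2} with
\[
\varphi=\tau_{-h}\big(\eta^{q}\,|\tau_h u|^{\beta}\tau_h u\big),
\]
where $\eta$ is a cutoff function between $B_r$ and $B_R$ and $\beta\ge0$ is a parameter to be optimized ($\beta=0$ gives the basic differentiability step, larger $\beta$ feeds the iteration in $q$); admissibility of $\varphi$ in $W^{s_1,1}_0(\Omega)\cap W^{s_p,p}_0(\Omega)$ is guaranteed by the usual truncation and density argument. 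A discrete change of variables (an ``integration by parts'' for difference quotients) splits the resulting identity into a $1$-growth contribution and a $p$-growth contribution.

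For the $1$-growth term, the monotonicity of the set-valued sign function yields $\big(Z(x,y)-Z_h(x,y)\big)\big(\tau_h u(x)-\tau_h u(y)\big)\ge0$ whenever $\varphi$ is built from a nondecreasing function of $\tau_h u(x)-\tau_h u(y)$, so this term carries a favourable sign; the only cost is an error produced by moving the cutoff, which, since $|Z|\le1$ and the integral \eqref{assume1} is finite (Remark \ref{rem0}), is bounded by $\mathcal{T}$, by a negative power of $R-r$, and by a power of $|h|$. For the $p$-growth term we invoke the subquadratic monotonicity inequality $\big(|a|^{p-2}a-|b|^{p-2}b\big)(a-b)\ge c\,(|a|+|b|)^{p-2}|a-b|^{2}$, together with H\"older's inequality and the local $L^{\infty}$ bound on $u$; after absorbing cutoff errors controlled by $[u]_{W^{s_p,p}(B_R)}$ and $\mathcal{T}$, this produces a lower bound of the form
\[
c\iint \eta(x)^{q}\,\frac{\big|\,|\tau_h u(x)|^{\frac{\beta}{2}}\tau_h u(x)-|\tau_h u(y)|^{\frac{\beta}{2}}\tau_h u(y)\,\big|^{p}}{|x-y|^{N+s_pp}}\,dx\,dy .
\]

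Dividing by a suitable power of $|h|$ and applying the finite-difference characterization of fractional Sobolev spaces — and using boundedness once more to pass from $|\tau_h u|^{\beta/2}\tau_h u$ back to $\tau_h u$ — one upgrades $u\in W^{s_p,p}_{\mathrm{loc}}$ to $u\in W^{\sigma_1,q}_{\mathrm{loc}}$ for some $\sigma_1>s_p$, with a quantitative bound of the stated shape. Iterating this estimate (this is exactly Proposition \ref{pro9}) generates a strictly increasing sequence $\sigma_0=s_p<\sigma_1<\sigma_2<\cdots$ with $\sigma_k\uparrow\frac{s_pp}{p-1}$; after finitely many steps it exceeds any prescribed $\sigma<\frac{s_pp}{p-1}$, and the accumulated deterioration of the constants yields the factor $(R-r)^{-\kappa}$. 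A parallel iteration in which $\beta\to\infty$, closed at each stage by the local $L^{\infty}$ bound, raises the integrability exponent from $p$ to any $q\ge p$.

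The main obstacle is the $p$-growth term in the subquadratic regime. Since $p<2$, monotonicity only controls the degenerate quantity $(|a|+|b|)^{p-2}|a-b|^{2}$, so recovering a clean $L^{p}$ finite-difference requires an interpolation whose exponents depend on $p$, on $\beta$, and on the truncation level — this is the source of the additional parameters noted in the introduction. Tracking these parameters carefully, so that the per-step gain $\sigma_{k+1}-\sigma_k$ stays bounded below until $\sigma$ is reached and so that the $q$-iteration closes, is the technical heart of the proof. A secondary point is that the $1$-growth boundary error must be absorbed uniformly in $h$, which is precisely what the integrability of \eqref{assume1} provides.
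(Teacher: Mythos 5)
Your overall strategy is the right one and matches the paper's (test with a power-type function of $\tau_h u$ times a cutoff, split into a $1$-growth term and a $p$-growth term, handle the degenerate $p<2$ monotonicity via interpolation, then iterate), but there is a substantive error in the key quantitative step that misrepresents how the iteration actually works.

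After invoking the subquadratic monotonicity inequality (the paper's Lemma~\ref{lem3}), the favourable local contribution of the $p$-growth term is \emph{not}
\[
c\iint \eta^{q}\,\frac{\bigl|\,|\tau_h u(x)|^{\beta/2}\tau_h u(x)-|\tau_h u(y)|^{\beta/2}\tau_h u(y)\,\bigr|^{p}}{|x-y|^{N+s_pp}}\,dx\,dy ,
\]
as you claim, but rather the degenerate quadratic form
\[
\iint \frac{\bigl(|u_h(x)-u_h(y)|+|u(x)-u(y)|\bigr)^{p-2}\bigl(|\tau_h u(x)|+|\tau_h u(y)|\bigr)^{q-p}|\tau_h u(x)-\tau_h u(y)|^{2}}{|x-y|^{N+s_pp}}\,dx\,dy ,
\]
and because $p<2$ this does \emph{not} dominate a pure $p$-power of the increment pointwise. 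Converting the quadratic form into a $p$-power forces a H\"older split with exponents $\bigl(\tfrac{2}{p},\tfrac{2}{2-p}\bigr)$, which redistributes the kernel exponent: the resulting seminorm lives at order $N+\tfrac{s_pp^2}{2}+\sigma\bigl(1-\tfrac{p}{2}\bigr)$ with $\sigma<\gamma p$ (where $\gamma$ is the \emph{current} differentiability of $u$), which is strictly smaller than $N+s_pp$. The companion factor $P_{1,3}$ carries the extra seminorm $[u]_{W^{\gamma,q}}$. In other words, the subquadratic interpolation genuinely costs differentiability at each step; you cannot reach $N+s_pp$ in the denominator in one pass. Consequently your assertion that a single application of the energy estimate upgrades $u\in W^{s_p,p}_{\mathrm{loc}}$ to $u\in W^{\sigma_1,q}_{\mathrm{loc}}$ with $\sigma_1>s_p$ is false for $q>p$: the paper starts the iteration from $W^{s_pp/q,q}_{\mathrm{loc}}$ (obtained cheaply from $L^\infty$ plus $W^{s_p,p}$, equation~(\ref{13.1})), which is \emph{weaker} than $W^{s_p,p}$, and each step increments $\gamma$ by a fixed fraction of the deficit to the fixed point $\tfrac{s_pp}{p-1}$ (via $\gamma_{i+1}=\gamma_i\bigl(1-\tfrac{p^2-p}{2q}\bigr)+\tfrac{s_pp^2}{2q}$), so finitely many steps are needed to pass any $\sigma<\tfrac{s_pp}{p-1}$. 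You do acknowledge the interpolation as ``the main obstacle'' at the end, but the explicit lower bound you write and the one-step upgrade claim are inconsistent with that acknowledgment and would short-circuit the argument.

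A secondary, lesser point: you propose a ``parallel iteration in which $\beta\to\infty$'' to raise integrability to $q$. That is the strategy of the superquadratic companion paper, but this paper deliberately avoids it. Here $q$ is fixed from the outset (the test function is $J_{q-p+2}(\tau_h u)\eta^p$, i.e.\ $\beta=q-p$ is frozen), the embedding $L^\infty\cap W^{s_p,p}\hookrightarrow W^{s_pp/q,q}$ absorbs the integrability gain up front, and the only iteration is on the fractional differentiability index $\gamma$. Either route can be made to work, but it is worth being clear that the parallel $\beta$-iteration is not needed once one accepts the differentiability loss at the first step.
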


\begin{theorem}
	\label{th26}
	Let $p\in(1,2)$ and $s_p\in \left(\frac{p-1}{p},1\right)$. Suppose  that $u$ is a locally bounded weak solution to problem \eqref{1.1}. Then, we have 
	\begin{align*}
		u \in W_{\mathrm{loc}}^{1,q}(\Omega) ,
	\end{align*} 
	for any $q\geq p$. Moreover, there exist constants $C$ and $\kappa$ depending on $N, p, q, s_1, s_p,$ such that for 
	any ball  $B_R \equiv B_R\left(x_0\right)\subset\subset\Omega$ with $R\in (0,1)$ and any $r\in (0,R)$,
	\begin{align*}
		\|\nabla u\|_{L^q\left(B_r\right)}^q \leq \frac{C\left(\mathcal{T}+[u]_{W^{s_p, p}\left(B_R\right)}+1\right)^q}{(R-r)^{\kappa} } .
	\end{align*}
\end{theorem}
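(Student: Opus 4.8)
The plan is to establish both theorems simultaneously via the nonlocal finite-difference quotient method, since Theorem~\ref{th26} is essentially the ``endpoint'' version of Theorem~\ref{th12} where the differentiability exponent $\sigma$ reaches $1$. The first step is to fix a direction $h \in \mathbb{R}^N$ with $|h|$ small, denote by $\tau_h u(x) = u(x+h) - u(x)$ the translation difference, and test the weak formulation \eqref{1.2} with a discrete-gradient test function of the form $\varphi = \tau_{-h}\bigl(\eta^2 |\tau_h u|^{\beta-1}\tau_h u\bigr)$ (or a truncated variant, using $\min\{|\tau_h u|, \lambda\}$ to keep $\varphi$ admissible), where $\eta$ is a cutoff supported in $B_R$ with $\eta \equiv 1$ on $B_r$, and $\beta \ge 0$ is the Moser iteration parameter. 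Discrete integration by parts moves the translation onto the $(1,p)$-operator, producing on the left-hand side a nonnegative quantity controlling a weighted second-difference energy of $u$, and on the right-hand side error terms involving only first differences of $u$ and the cutoff. The $1$-Laplacian term is handled using $|Z| \le 1$ and the monotonicity $\langle Z(a) - Z(b), a-b\rangle \ge 0$ for $Z \in \mathrm{sgn}$; crucially, by Remark~\ref{rem0}, the tail of the $s_1$-term is finite with no decay assumption, so it contributes only a harmless term proportional to $\mathcal{T}$ times a power of $(R-r)^{-1}$.

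The second step is the core energy estimate, which I expect to be the content of Proposition~\ref{pro9} referenced in the introduction: one derives an inequality of the schematic form
\begin{align*}
\iint_{B_r \times B_r} \frac{\bigl|\tau_h(|\tau_h u|^{\frac{p-2+\beta}{p}}\,\mathrm{sgn}\,\tau_h u)\bigr|^p}{|x-y|^{N+s_p p}}\,dx\,dy \le \frac{C}{(R-r)^{\kappa}}\,|h|^{?}\,\bigl(\mathcal{T} + [u]_{W^{s_p,p}(B_R)} + 1\bigr)^{p(1+\beta/\cdots)},
\end{align*}
where the subquadratic exponent $p < 2$ forces one to use the elementary inequality $|a-b|^p \le C\bigl||a|^{p-2}a - |b|^{p-2}b\bigr|\,(|a|+|b|)^{2-p}$ in reverse, and then absorb the $(|a|+|b|)^{2-p}$ factor using local boundedness of $u$ (this is precisely where $\|u\|_{L^\infty}$ enters, and why the subquadratic estimates ``depend on more parameters'' than the superquadratic ones). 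Converting the finite-difference bound into a fractional Sobolev bound via the standard characterization (integrating $|h|^{-N-\gamma q}\|\tau_h v\|_{L^q}^q$ over $h$, cf.\ the difference-quotient lemmas in \cite{BL17,L19}), one gains that $|\tau_h u|^{\frac{p-2+\beta}{p}}$ — and hence, after unwinding, $u$ itself — lies in a fractional Sobolev space of order slightly below $s_p p/(p-1)$; when $s_p > (p-1)/p$ this order exceeds $1$, which by the embedding $W^{\gamma,q} \hookrightarrow W^{1,q}$ for $\gamma > 1$ (interpreted via $W^{1+\delta,q} \hookrightarrow W^{1,q}$) yields $u \in W^{1,q}_{\mathrm{loc}}$ and the gradient bound of Theorem~\ref{th26}.

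The third step is the Moser-type iteration in the parameter $\beta$: starting from $\beta = 0$ (energy level) one feeds the output integrability back into the right-hand side, at each stage raising the exponent $q$ by a fixed fractional-Sobolev-embedding gain while the differentiability exponent $\sigma$ stays just below the ceiling $s_p p/(p-1)$ (respectively $1$). Iterating finitely many times reaches any prescribed $q \ge p$, with the constants $C$ and $\kappa$ accumulating in a controlled way that depends on $N,p,q,s_1,s_p,\sigma$ as stated. The final bookkeeping step is to track the dependence of all constants on $R-r$ through the cutoff gradients and on $\mathcal{T}$ through the tail and $L^\infty$ bounds, arriving at the displayed estimates.

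The main obstacle, and the step demanding the most care, is the energy estimate of step two in the subquadratic regime $p<2$: the monotonicity of the $p$-Laplacian gives good control only of a ``$p$-power of a difference'' quantity, but the quantity one wants to iterate is a genuine $L^p$-norm of a second difference, and bridging the two costs a factor $(|\tau_h u(x)| + |\tau_h u(y)|)^{2-p}$ with a negative-looking power that must be bounded above — which is possible only because $u \in L^\infty_{\mathrm{loc}}$, but requires splitting into regions where $|\tau_h u|$ is comparable to or much smaller than $|h|^{s_p}$ and carefully balancing the resulting exponents so that the $|h|$-power is still integrable against $|h|^{-N-\sigma q}$ for $\sigma$ up to the claimed threshold. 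It is exactly this balancing that produces the dichotomy at $s_p = (p-1)/p$ and the dependence on the extra parameter $\sigma$.
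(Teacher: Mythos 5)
Your overall scaffold---test the weak equation with a discrete finite-difference power of $\tau_h u$, derive an energy estimate, iterate on the differentiability index, and invoke difference-quotient lemmas---is the right shape, and it is roughly what the paper does (the test function is $\varphi = J_{q-p+2}(\tau_h u)\,\eta^p$ with $J_\gamma(a)=|a|^{\gamma-2}a$, and the algebraic workhorse is Lemma~\ref{lem3}). However there is a genuine gap at the decisive step, the passage to $W^{1,q}$.

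You write that the finite-difference estimates place $u$ in a fractional Sobolev space of order ``slightly below $s_p p/(p-1)$,'' and that when this order exceeds $1$ the embedding ``$W^{\gamma,q}\hookrightarrow W^{1,q}$ for $\gamma>1$'' delivers the theorem. This step does not work: the Gagliardo seminorm
\begin{equation*}
\iint \frac{|u(x)-u(y)|^q}{|x-y|^{N+\gamma q}}\,dx\,dy
\end{equation*}
is infinite for every non-constant $u$ as soon as $\gamma\geq 1$, so there is no ``$W^{\gamma,q}$ with $\gamma>1$'' defined this way, and the Besov-type characterization you invoke (integrating $|h|^{-N-\gamma q}\|\tau_h u\|_{L^q}^q$ over $h$) likewise degenerates at $\gamma=1$. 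A first-order difference can satisfy $\|\tau_h u\|_{L^q}\lesssim |h|^{\gamma}$ only for $\gamma\leq 1$; one never ``overshoots'' $1$ at the first-difference level. The correct mechanism, which your proposal omits, is to work at the level of \emph{second} differences $\tau_h(\tau_h u)$: the energy estimate (Proposition~\ref{pro9}) and the lower bound $|\tau_h(J_{q/p+1}(\tau_h u))|\geq c\,|\tau_h(\tau_h u)|^{q/p}$ yield
\begin{equation*}
\int_{B_r}|\tau_h(\tau_h u)|^q\,dx \leq C\,|h|^{\theta q} \qquad\text{with some } \theta>1,
\end{equation*}
and it is the $\gamma>1$ case of Lemma~\ref{lem6} (the second-difference-to-first-difference lemma) that then gives $\int |\tau_h u|^q\lesssim |h|^q$, after which Lemma~\ref{lem16} converts this into weak differentiability and the $L^q$ gradient bound. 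Without second differences there is no way to cross the $\gamma=1$ barrier.

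A secondary inaccuracy: your iteration is described as a Moser iteration in a power $\beta$, raising integrability $q$ at each step. In the paper $q$ is \emph{fixed} throughout; the test-function power $q-p+2$ is tied to the target $q$, and the iteration (Lemmas~\ref{lem140}, \ref{lem14}) is over the \emph{differentiability} index $\gamma_i$, converging towards the cap $\min\{s_p p/(p-1),\,1\}$. One more carefully-tuned application of the energy estimate (with $\sigma$ chosen so that the $|h|$-power strictly exceeds $q$) then triggers the Lemma~\ref{lem6}/\ref{lem16} mechanism. This is structurally different from the integrability-raising scheme you sketched.

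In short: the idea is in the right neighborhood, but the claimed embedding past order one is false, and the essential ingredient---a quantitative second-difference estimate combined with the $\gamma>1$ branch of the difference-quotient lemma---is missing from your argument.
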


By applying the Morrey-type embedding for (fractional) Sobolev spaces and carefully selecting the relevant parameters, one can directly establish the interior Hölder regularity of weak solutions.

\begin{corollary}
	\label{cor13}
	Let $p\in (1,2)$ and $s_p\in\left( 0,\frac{p-1}{p}\right] $. Then, for any locally bounded weak solution to problem \eqref{1.1} in the sense of Definition \ref{def1}, we have
	\begin{align*}
		u\in C^{0, \gamma}_\mathrm{loc}(\Omega)
	\end{align*}
	for any $\gamma\in \left( 0,\frac{s_p p}{p-1}\right)$. Moreover, there exist constants $C$ and $\kappa$ depending on $N, p, s_1, s_p$, such that for any ball $B_R\equiv B_R(x_0)\subset\subset\Omega$ with $R\in (0,1)$ and any $r\in(0,R)$,
	\begin{align*}
		[u]_{C^{0,\gamma}(B_r)}\le \frac{C\left( \mathcal{T}+[u]_{W^{s_p,p}(B_R)}+1\right) }{(R-r)^{\kappa}}.
	\end{align*}
\end{corollary}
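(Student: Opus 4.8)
The plan is to deduce Corollary~\ref{cor13} directly from Theorem~\ref{th12} by invoking the Morrey-type embedding for fractional Sobolev spaces. Recall that if $w\in W^{\sigma,q}(B_r)$ with $\sigma q>N$, then $w\in C^{0,\sigma-N/q}(B_r)$ with the quantitative estimate
\begin{align*}
	[w]_{C^{0,\sigma-N/q}(B_r)}\le C(N,q,\sigma)\,[w]_{W^{\sigma,q}(B_r)}.
\end{align*}
So the whole argument reduces to choosing the parameters $q\ge p$ and $\sigma\in\left[s_p,\frac{s_pp}{p-1}\right)$ in Theorem~\ref{th12} so that $\sigma-N/q$ equals (or exceeds) the target H\"older exponent $\gamma$, while keeping $\sigma q>N$.

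The first step is the parameter selection. Fix $\gamma\in\left(0,\frac{s_pp}{p-1}\right)$. Since $\frac{s_pp}{p-1}>\gamma$, pick $\sigma\in\left(\max\{\gamma,s_p\},\frac{s_pp}{p-1}\right)$ — this is a nonempty interval because $\frac{s_pp}{p-1}>s_p$ (as $p>1$) and $\frac{s_pp}{p-1}>\gamma$ by hypothesis; note also $\sigma>0$. Then choose $q$ large enough that simultaneously $q\ge p$, $\sigma q>N$, and $\sigma-N/q\ge\gamma$; the last two conditions amount to $q>\max\{N/\sigma,\ N/(\sigma-\gamma)\}$, which is finite since $\sigma>\gamma$. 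For such $(\sigma,q)$, Theorem~\ref{th12} gives $u\in W^{\sigma,q}_{\mathrm{loc}}(\Omega)$, and the Morrey embedding on $B_r$ yields $u\in C^{0,\sigma-N/q}_{\mathrm{loc}}(\Omega)\subset C^{0,\gamma}_{\mathrm{loc}}(\Omega)$ (using that on a bounded set a higher H\"older exponent embeds into a lower one).

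The second step is the quantitative estimate. Chaining the Morrey inequality with the bound from Theorem~\ref{th12} gives, for $B_R\equiv B_R(x_0)\subset\subset\Omega$ with $R\in(0,1)$ and $r\in(0,R)$,
\begin{align*}
	[u]_{C^{0,\gamma}(B_r)}\le C\,[u]_{C^{0,\sigma-N/q}(B_r)}\le C\,[u]_{W^{\sigma,q}(B_r)}\le C\left(\frac{\left(\mathcal{T}+[u]_{W^{s_p,p}(B_R)}+1\right)^q}{(R-r)^{\kappa_0}}\right)^{1/q},
\end{align*}
where $\kappa_0$ is the exponent from Theorem~\ref{th12}; the first inequality uses $\mathrm{diam}(B_r)\le 2R\le 2$ so the passage from exponent $\sigma-N/q$ to $\gamma$ costs only a dimensional constant. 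Setting $\kappa:=\kappa_0/q$ and absorbing all constants (which now depend only on $N,p,s_1,s_p$ through the fixed choice of $\sigma,q$ made from $\gamma$) produces exactly the claimed inequality
\begin{align*}
	[u]_{C^{0,\gamma}(B_r)}\le \frac{C\left(\mathcal{T}+[u]_{W^{s_p,p}(B_R)}+1\right)}{(R-r)^{\kappa}}.
\end{align*}

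There is no serious obstacle here — the corollary is a soft consequence of the theorem plus a standard embedding. The only point requiring a little care is verifying the interval for $\sigma$ is nonempty and that a finite $q$ meets all three constraints; this is where the hypothesis $\gamma<\frac{s_pp}{p-1}$ is used essentially (if $\gamma$ were allowed to equal the endpoint, no admissible $\sigma<\frac{s_pp}{p-1}$ with $\sigma>\gamma$ would exist). One should also note that the dependence of $C,\kappa$ on $\gamma$ is implicit through the selected $\sigma,q$, which is consistent with the statement listing the dependence as $N,p,s_1,s_p$ (with $\gamma$ fixed at the outset).
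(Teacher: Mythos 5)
Your argument is correct and follows essentially the same route as the paper: apply Theorem \ref{th12} to obtain $W^{\sigma,q}_{\mathrm{loc}}$ regularity with suitably chosen $\sigma\in\bigl[s_p,\tfrac{s_pp}{p-1}\bigr)$ and $q\ge p$, then invoke the Morrey embedding of Lemma \ref{lemMorrey1}. The only cosmetic difference is that the paper makes the explicit choices $\tilde\gamma=\tfrac12\bigl(\gamma+\tfrac{s_pp}{p-1}\bigr)$ and $q=\tfrac{2N}{\frac{s_pp}{p-1}-\gamma}$, which force $\tilde\gamma-\tfrac Nq=\gamma$ exactly, whereas you allow $\sigma-\tfrac Nq\ge\gamma$ and absorb the slack via the bounded-domain comparison of H\"older seminorms.
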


\begin{corollary}
	\label{cor27}
	Let $p\in (1,2)$ and $s_p\in \left( \frac{p-1}{p},1\right) $. Then, for any locally bounded weak solution to problem \eqref{1.1} in the sense of Definition \ref{def1}, we have
	\begin{align*}
		u\in C^{0,\gamma}_\mathrm{loc}(\Omega)
	\end{align*}
	for any $\gamma\in(0,1)$. Moreover, there exist constants $C$ and $\kappa$ depending on $N, p, s_1, s_p, \gamma$, such that for any ball $B_R\equiv B_R(x_0)\subset\subset\Omega$ with $R\in(0,1)$ and any $r\in(0,R)$,
	\begin{align*}
		[u]_{C^{0,\gamma}(B_r)}\le \frac{C\left( \mathcal{T}+[u]_{W^{s_p,p}(B_R)}+1\right) }{(R-r)^{\kappa}}.
	\end{align*}
\end{corollary}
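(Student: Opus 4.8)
The plan is to deduce the corollary directly from the $W^{1,q}_{\mathrm{loc}}$-estimate of Theorem~\ref{th26}, combined with the (scale-invariant form of the) Morrey embedding. This parallels the argument behind Corollary~\ref{cor13}, but is simpler: in the range $s_p\in\left(\frac{p-1}{p},1\right)$ the weak solution already possesses a genuine gradient belonging to every $L^q_{\mathrm{loc}}(\Omega)$, so no fractional Sobolev–Morrey embedding is needed.

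First I would fix $\gamma\in(0,1)$ and set $q=q(\gamma):=\frac{N}{1-\gamma}$. Since $N\ge 2$ and $p<2$, one has $q>N\ge 2>p$, so the hypothesis $q\ge p$ of Theorem~\ref{th26} is satisfied for this exponent. Next I would invoke the elementary integral estimate underlying Morrey's theorem: for $q>N$, any ball $B_r=B_r(x_0)$, and any $x,y\in B_r$,
\begin{align*}
|u(x)-u(y)|\le C(N,q)\,|x-y|^{1-\frac{N}{q}}\,\|\nabla u\|_{L^q(B_r)},
\end{align*}
which is scale invariant, so its constant depends only on $N$ and $q$ (equivalently on $N$ and $\gamma$). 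With $\gamma=1-\frac{N}{q}$ this yields $[u]_{C^{0,\gamma}(B_r)}\le C(N,\gamma)\,\|\nabla u\|_{L^q(B_r)}$.

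It then remains to insert the bound of Theorem~\ref{th26}: for any ball $B_R\equiv B_R(x_0)\subset\subset\Omega$ with $R\in(0,1)$ and any $r\in(0,R)$,
\begin{align*}
[u]_{C^{0,\gamma}(B_r)}\le C(N,\gamma)\left(\frac{C\left(\mathcal{T}+[u]_{W^{s_p,p}(B_R)}+1\right)^q}{(R-r)^{\kappa}}\right)^{1/q}
=\frac{C\left(\mathcal{T}+[u]_{W^{s_p,p}(B_R)}+1\right)}{(R-r)^{\kappa/q}},
\end{align*}
where $C$ and the exponent $\kappa/q$ depend on $N,p,q,s_1,s_p$, hence on $N,p,s_1,s_p,\gamma$ once $q=q(\gamma)$ is fixed. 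Letting $B_R$ range over all balls compactly contained in $\Omega$ and $r<R$ gives $u\in C^{0,\gamma}_{\mathrm{loc}}(\Omega)$ for every $\gamma\in(0,1)$, which is the claim.

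As for difficulties, there is essentially no analytic obstacle here, since everything reduces to Theorem~\ref{th26}; the only mild points requiring care are (i) using the scale-invariant version of the Morrey inequality so that the embedding constant does not deteriorate as $r\to0$, and (ii) checking that the exponent $q=\frac{N}{1-\gamma}$ dictated by the target Hölder exponent satisfies $q\ge p$ — both of which are immediate given $N\ge 2$ and $p<2$. If one prefers to express the bound on $B_r$ through the full norm $\|u\|_{W^{1,q}(B_r)}$, the additional term $\|u\|_{L^q(B_r)}$ is controlled by $|B_r|^{1/q}\|u\|_{L^\infty(B_r)}\le C\mathcal{T}$ using the local boundedness assumption, so the stated form of the estimate is unaffected.
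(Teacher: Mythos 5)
Your proposal is correct and follows essentially the same route as the paper: both fix $q=\frac{N}{1-\gamma}$, verify $q>N\ge p$, apply the Morrey embedding $W^{1,q}\hookrightarrow C^{0,1-N/q}$ (Lemma~\ref{Morrey2}), and then insert the $W^{1,q}$ bound from Theorem~\ref{th26}, absorbing the $q$-th root into redefined constants $C$ and $\kappa$. Your added remarks on scale invariance of the Morrey constant and on controlling $\|u\|_{L^q}$ by $\mathcal{T}$ are sound but not substantively different from the paper's argument.
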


\begin{remark}
	In our iterative scheme, we define the following sequence to track the evolution of weak differentiability:
	\begin{align*}
		\gamma_0=\gamma,\quad \gamma_{i+1}=\gamma_i\left(1-\frac{p^2}{2 q}+\frac{p}{2 q}\right)+\frac{s_pp^2}{2 q}.
	\end{align*}
	For the case $s_p \in \left(0,  \frac{p-1}{p} \right]$, we have
	\begin{align*}
		\lim\limits_{i\rightarrow+\infty}\gamma_i=\frac{s_pp}{p-1}.
	\end{align*}
	For the case $s_p \in \left( \frac{p-1}{p}, 1 \right)$, since $\gamma_i$ may exceed 1, the limitations of the difference-quotient technique prevent us from iterating the first- and second-order differences indefinitely.
\end{remark}

Provided that the gradient of weak solutions exists, one can also establish the Sobolev regularity of $Du$ as follows.

\begin{corollary}
	\label{corG}
	Let $q\ge2$, $s_p\in\left( \frac{p-1}{p},1\right) $ and let $u$ be a locally bounded weak solution to problem \eqref{1.1} in the sense of Definition \ref{def1}. Then, we have
	\begin{align*}
		u\in W^{1+\alpha,q}_\mathrm{loc}(\Omega)
	\end{align*}
	for any $\alpha\in \left( 0,\frac{p}{2q}(s_pp-p+1)\right) $. Moreover, there exist constants $C$ and $\kappa$ depending on $N, p, s_1, s_p,\alpha$, such that for any ball $B_R\equiv B_R(x_0)\subset\subset\Omega$ with $R\in (0,1)$ and any $r\in(0,R)$,
	\begin{align*}
		[\nabla u]^{q}_{W^{\alpha,q}(B_r)}\le \frac{C\left( \mathcal{T}+[u]_{W^{s_p,p}(B_R)}+1\right)^q}{(R-r)^{\kappa}}.
	\end{align*}
\end{corollary}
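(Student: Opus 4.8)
The goal is to upgrade the $W^{1,q}_{\mathrm{loc}}$-regularity of Theorem~\ref{th26} to $W^{1+\alpha,q}_{\mathrm{loc}}$-regularity for small $\alpha$. The natural strategy is to iterate the finite-difference-quotient machinery one more time, now applied to first-order difference quotients of $u$ (equivalently, to $\nabla u$, whose existence in $L^q_{\mathrm{loc}}$ is guaranteed by Theorem~\ref{th26}). Concretely, for $h\in\mathbb{R}^N$ small write $\tau_h u(x)=u(x+h)-u(x)$ and test the weak formulation \eqref{1.2} with a discrete-second-difference test function of the form $\varphi=\eta^{\beta}\,\psi(\tau_h u)$, where $\eta$ is a cutoff supported in $B_R$ and $\psi$ is chosen to produce a positive quadratic-type contribution from the $p$-Laplacian term after summing the $+h$ and $-h$ increments. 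This is exactly the scheme already used to pass from $W^{s_p,p}$ to the conclusions of Theorems~\ref{th12} and \ref{th26}; here one simply runs it starting from the improved a priori bound on $\nabla u$ rather than on $[u]_{W^{s_p,p}}$.

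First I would record the discrete differentiation identity controlling $[\nabla u]_{W^{\alpha,q}(B_r)}$ by a weighted double integral of second-order differences $\tau_h\tau_h u$, via the standard characterization of $W^{\alpha,q}$ by finite differences (this is where $q\ge 2$ enters, to keep the relevant exponents in a range where the algebraic inequalities below hold). Next I would test \eqref{1.2} with the second-difference test function; the $p$-growth term produces, after the usual monotonicity/convexity manipulation for the map $t\mapsto|t|^{p-2}t$ in the subquadratic regime $p\in(1,2)$, a lower bound of the form $\int\int |x-y|^{-N-s_pp}\,\big(\text{second differences of }u\big)^2\,(\cdots)^{p-2}$, while the $1$-growth term is handled exactly as in the proof of Theorem~\ref{th26}: because $Z\in L^\infty$ and the kernel $|x-y|^{-N-s_1}$ together with \eqref{assume1} is integrable on the relevant region, that term only contributes a harmless bounded quantity (this is the source of the constant ``$1$'' in the estimate). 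The right-hand side, after Hölder and the subquadratic algebraic inequality $|a|^{p-2}a-|b|^{p-2}b|\lesssim |a-b|^{p-1}$, is then bounded in terms of $\|\nabla u\|_{L^q(B_R)}$ and $\mathcal T$ times a negative power of $R-r$, using the already-established $W^{1,q}$ bound to absorb the top-order pieces.

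The threshold for $\alpha$ comes out of balancing the kernel exponent $s_pp$ against the loss $p$ incurred in the iteration, exactly as in the sequence $\gamma_{i+1}=\gamma_i(1-\frac{p^2}{2q}+\frac p{2q})+\frac{s_pp^2}{2q}$ displayed in the Remark: performing the improvement once, starting from $\gamma_0=1$, yields a gain up to $1+\frac{p}{2q}(s_pp-p+1)$, which is precisely the stated range $\alpha\in\big(0,\frac{p}{2q}(s_pp-p+1)\big)$ — note this is positive exactly because $s_p>\frac{p-1}{p}$. Since $1+\alpha$ stays below $2$ for the admissible $\alpha$, the difference-quotient method does not run into the obstruction mentioned in the Remark, and a single pass suffices; one then combines the resulting difference bound with a covering/telescoping argument over dyadic annuli in $h$ to convert the weighted integral into the Gagliardo seminorm $[\nabla u]_{W^{\alpha,q}(B_r)}$.

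**Main obstacle.** The delicate point is the subquadratic algebra: for $p\in(1,2)$ the monotonicity of $t\mapsto|t|^{p-2}t$ is only Hölder, not Lipschitz, so the lower bound extracted from the $p$-Laplacian term carries a degenerate weight $(|u(x)-u(y)|+|\tau_h u(x)-\tau_h u(y)|)^{p-2}$ that must be reinstated on the right-hand side and controlled by an extra Hölder splitting — this is the step that forces the conclusion to depend on more parameters and that Proposition~\ref{pro9} is designed to handle. Getting the exponents in that splitting to close, while keeping the cutoff powers $\beta$ large enough for the Leibniz-type error terms to be absorbed, is the real work; everything else is a bookkeeping adaptation of the proof of Theorem~\ref{th26}.
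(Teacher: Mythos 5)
Your proposal takes essentially the same route as the paper. You correctly identify that one starts from the $W^{1,q}_{\mathrm{loc}}$ bound of Theorem~\ref{th26}, runs the difference-quotient scheme of Proposition~\ref{pro9} (in the form where $[u]_{W^{\gamma,q}}$ is replaced by $\|\nabla u\|_{L^q}$ via $W^{1,q}\hookrightarrow W^{\gamma,q}$, which is exactly Lemma~\ref{lem20}), bounds the second-order differences $\tau_h\tau_h u$ in $L^q$, and then converts that bound into a Gagliardo seminorm estimate for $\nabla u$ (the ``dyadic telescoping in $h$'' you describe is precisely what Lemma~\ref{lem21} packages); and your computation of the threshold $1+\frac{p}{2q}(s_pp-p+1)$ as $\gamma_1$ starting from $\gamma_0=1$, together with the observation that a single pass suffices because $1+\alpha<2$, matches the paper's choice of $\sigma$ and $\tilde\alpha$ exactly.
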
 

This paper is organized as follows. In Section \ref{sec2}, we introduce the notations for fractional Sobolev spaces and present several fundamental lemmas. Section \ref{sec3} is devoted primarily to establishing energy estimates for weak solutions. Finally, in Section \ref{sec4}, we derive the Sobolev regularity of weak solutions using the finite-difference quotient method combined with a Moser-type iteration scheme.

\section{Preliminaries}
\label{sec2}
In this section, we introduce the notations and several auxiliary lemmas that will be employed in the proofs of main results.

\subsection{Notations}

For a vector $z\in\mathbb{R}^N$, we denote its Euclidean norm by $|z|$. Throughout the manuscript, the symbol $C$ stands for a positive constant whose value may vary from one occurrence to another, possibly even within the same line. Whenever relevant, we explicitly specify its dependence on parameters, for example, a constant depending on $N,p,s_1,s_p$ is denoted by $C(N,p,s_1,s_p)$.

For a function $w:\Omega\rightarrow\mathbb{R}$, if $w\in C^{0,\alpha}(B_R(x_0))$ for some $\alpha\in(0,1)$ and for every ball $B_R(x_0)\subset\subset\Omega$, we write $w\in C^{0,\alpha}_\mathrm{loc}(\Omega)$. The seminorm of this H\"{o}lder space is defined by
\begin{align*}
	[w]_{C^{0,\alpha}(B_R(x_0))}:=\sup\limits_{x,y\in B_R(x_0),x\neq y}\frac{|w(x)-w(y)|}{|x-y|^{\alpha}}.
\end{align*}

Next, we introduce the (fractional) Sobolev space. For $q\ge 1$ and $\gamma\in (0,1)$, the space $W^{1,q}(\Omega)$ and $W^{\gamma,q}(\Omega)$ are respectively defined as
\begin{align*}
	\left\lbrace u\in L^q(\Omega)\bigg|\|u\|_{W^{1,q}(\Omega)}:=\|u\|_{L^q(\Omega)}+\|\nabla u\|_{L^q(\Omega)}<+\infty\right\rbrace 
\end{align*}
and
\begin{align*}
	\left\lbrace u\in L^q(\Omega)\bigg|[u]_{W^{\gamma,q}(\Omega)}:=\left( \int_{\Omega}\int_{\Omega}\frac{|u(x)-u(y)|^q}{|x-y|^{N+\gamma q}}\,dxdy\right)^\frac{1}{q} <+\infty\right\rbrace .
\end{align*}
We define $\mathrm{Tail}(u;x_0,R)$ to indicate the nonlocal information of $u\in  L^p_{\mathrm{loc}}(\mathbb{R}^N)$ as
\begin{align*}
	\mathrm{Tail}(u;x_0,R):=\left( R^{s_pp}\int_{\mathbb{R}^N\backslash B_R(x_0)}\frac{|u|^{p-1}}{|x-x_0|^{N+s_p p}}\,dx\right)^\frac{1}{p-1}. 
\end{align*}

\subsection{Basic estimates and embedding results}
Suppose that $u$ is locally bounded in $\Omega$. We recall the following lemma, which was originally established in \cite[Lemma 2.3]{BLS18}.
\begin{lemma}
	\label{lem4}
	Let $p>1$ and $s_p\in (0, 1)$. For any $u\in L^{p-1}_{s_pp}(\mathbb{R}^N)$ and any ball $B_R\equiv B_R(x_0)\subset\subset\Omega$, $r\in (0,R)$, we have
	\begin{align*}
		\mathrm{Tail}(u;x_0,r)^{p-1}\le C(N)\left( \frac{R}{r}\right)^N\left( \mathrm{Tail}(u;x_0,R)+\|u\|_{L^\infty(B_R)}\right)^{p-1}. 
	\end{align*}
\end{lemma}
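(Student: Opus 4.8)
The plan is to compare the tail of $u$ at the smaller radius $r$ with the tail at the larger radius $R$ together with the supremum of $|u|$ on $B_R$, by splitting the domain of integration $\mathbb{R}^N \setminus B_r(x_0)$ into the annulus $B_R(x_0) \setminus B_r(x_0)$ and the exterior region $\mathbb{R}^N \setminus B_R(x_0)$. First I would write
\begin{align*}
	\mathrm{Tail}(u;x_0,r)^{p-1} = r^{s_pp}\int_{\mathbb{R}^N\setminus B_r(x_0)}\frac{|u|^{p-1}}{|x-x_0|^{N+s_pp}}\,dx = r^{s_pp}\left( \int_{B_R\setminus B_r} + \int_{\mathbb{R}^N\setminus B_R}\right) \frac{|u|^{p-1}}{|x-x_0|^{N+s_pp}}\,dx.
\end{align*}

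For the annular piece, I would bound $|u|^{p-1} \le \|u\|_{L^\infty(B_R)}^{p-1}$ and then estimate $\int_{B_R\setminus B_r}|x-x_0|^{-N-s_pp}\,dx \le \int_{\mathbb{R}^N\setminus B_r}|x-x_0|^{-N-s_pp}\,dx = C(N) s_p^{-1} p^{-1} r^{-s_pp}$ using polar coordinates; multiplying by the prefactor $r^{s_pp}$ gives a clean bound $C(N)\|u\|_{L^\infty(B_R)}^{p-1}$, which is even better than what is claimed since $(R/r)^N \ge 1$. For the exterior piece, the key observation is that on $\mathbb{R}^N \setminus B_R(x_0)$ one has $|x-x_0| \ge R > r$, so $|x-x_0|^{-N-s_pp}$ is already integrable there, and I would simply factor out the scaling: $r^{s_pp}\int_{\mathbb{R}^N\setminus B_R}|u|^{p-1}|x-x_0|^{-N-s_pp}\,dx = (r/R)^{s_pp} R^{s_pp}\int_{\mathbb{R}^N\setminus B_R}|u|^{p-1}|x-x_0|^{-N-s_pp}\,dx = (r/R)^{s_pp}\,\mathrm{Tail}(u;x_0,R)^{p-1} \le \mathrm{Tail}(u;x_0,R)^{p-1}$ since $r < R$. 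Adding the two contributions and using the elementary inequality $a^{p-1} + b^{p-1} \le C(p)(a+b)^{p-1}$ (or just $2^{\max\{0,2-p\}}(a+b)^{p-1}$, valid for $a,b\ge 0$, $p>1$), together with $(R/r)^N \ge 1$ to absorb the constants, yields the claimed estimate.

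This argument is essentially bookkeeping and I do not anticipate a genuine obstacle; the only mild subtlety is tracking the dimensional constant from the polar-coordinate integral $\int_{\mathbb{R}^N\setminus B_\rho}|y|^{-N-\beta}\,dy = \frac{|\partial B_1|}{\beta}\rho^{-\beta}$ and making sure the factor $(R/r)^N$ indeed dominates all the constants that appear — which it does trivially since $R/r > 1$. An alternative, slightly slicker route avoiding the annular split is to use $|x - x_0| \ge \frac{r}{R}(|x-x_0| \vee R) $-type comparisons, but the direct split above is the most transparent and is what I would present.
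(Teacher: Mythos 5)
The paper does not actually prove this lemma; it cites \cite[Lemma~2.3]{BLS18} verbatim, so there is no internal argument to compare against. Your decomposition of $\mathbb{R}^N\setminus B_r$ into the annulus $B_R\setminus B_r$ and the exterior $\mathbb{R}^N\setminus B_R$, together with the scaling identity $r^{s_pp}=(r/R)^{s_pp}R^{s_pp}$ on the exterior piece and the elementary inequality $a^{p-1}+b^{p-1}\le 2(a+b)^{p-1}$, is exactly the standard route.

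There is, however, a concrete gap in the annular estimate, and it is not mere bookkeeping. Your polar-coordinate evaluation
\begin{equation*}
\int_{\mathbb{R}^N\setminus B_r}|x-x_0|^{-N-s_pp}\,dx \;=\; \frac{|\partial B_1|}{s_p p}\,r^{-s_pp}
\end{equation*}
carries a factor $(s_p p)^{-1}$, which depends on $s_p$ and $p$, not only on $N$. When you then state that multiplying by $r^{s_pp}$ "gives a clean bound $C(N)\|u\|_{L^\infty(B_R)}^{p-1}$", you silently drop this factor, and your closing remark that "$(R/r)^N$ dominates all the constants that appear\dots since $R/r>1$" is false: knowing $(R/r)^N\ge 1$ gives no control whatsoever over $(s_pp)^{-1}$, which blows up as $s_p\to0$ and bears no relation to $R/r$. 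As written, your argument delivers a constant $C(N,s_p,p)$ rather than the $C(N)$ asserted in the lemma (and it is precisely because you have traded the harmless $(R/r)^N$ factor for the harmful $(s_pp)^{-1}$ that you thought you had obtained something "even better than what is claimed").

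The repair is elementary and explains where the $(R/r)^N$ in the statement actually comes from. On the annulus use the crude pointwise bound $|x-x_0|^{-N-s_pp}\le r^{-N-s_pp}$ (valid since $|x-x_0|\ge r$ there) and estimate the volume, rather than integrating in polar coordinates:
\begin{equation*}
r^{s_pp}\int_{B_R\setminus B_r}\frac{dx}{|x-x_0|^{N+s_pp}}\;\le\; r^{s_pp}\cdot r^{-N-s_pp}\cdot|B_R|\;=\;\omega_N\left(\frac{R}{r}\right)^N,
\end{equation*}
where $\omega_N=|B_1|$ depends only on $N$. Combined with your (correct) exterior estimate $(r/R)^{s_pp}\,\mathrm{Tail}(u;x_0,R)^{p-1}\le(R/r)^N\,\mathrm{Tail}(u;x_0,R)^{p-1}$ and the inequality $a^{p-1}+b^{p-1}\le 2(a+b)^{p-1}$, this closes the proof with a genuine $C(N)$. (Alternatively, one can rescue polar coordinates by not discarding the difference: $\frac{|\partial B_1|}{s_pp}r^{-s_pp}\bigl(1-(r/R)^{s_pp}\bigr)\le |\partial B_1|\,r^{-s_pp}\log(R/r)$, which cancels the $(s_pp)^{-1}$ and is then controlled by $(R/r)^N$; but the volume bound above is simpler.)
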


Additionally, we state the well-known embedding results for Sobolev functions. Lemmas \ref{lemMorrey1} and \ref{embed} are derived from \cite{NO23, BDL2401}, while Lemma \ref{Morrey2} can be found in \cite{L19}.

\begin{lemma}[\text{Embedding $W^{\gamma,q}\hookrightarrow C^{0,\gamma-\frac{N}{q}}$}]
	\label{lemMorrey1}
	Let $q\ge1$ and $\gamma\in (0, 1)$ such that $\gamma q >N$. Then, there exists a constant $C=C(N, q, \gamma)$ such that for any $w\in W^{\gamma,q}(B_R)$, we have
	\begin{align*}
		[w]_{C^{0,\gamma-\frac{N}{q}}(B_R)}\le C[w]_{W^{\gamma, q}(B_R)}.
	\end{align*}
\end{lemma}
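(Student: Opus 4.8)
\emph{Proof plan.} Everything follows once we establish the pointwise oscillation bound
\begin{equation*}
	|w(x)-w(y)|\le C\,[w]_{W^{\gamma,q}(B_R)}\,|x-y|^{\gamma-\frac{N}{q}}\qquad\text{for all }x,y\in B_R,
\end{equation*}
since taking the supremum over $x\neq y$ yields the asserted estimate on $[w]_{C^{0,\gamma-N/q}(B_R)}$ (and, along the way, exhibits the continuous representative of $w$). Observe first that $\gamma q>N\ge2$ forces $q>2$, so the conjugate exponent $q'=q/(q-1)$ is finite, and that $\gamma-\tfrac{N}{q}\in(0,1)$ since $\gamma<1$.

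The first ingredient is an averaged fractional Poincaré inequality: for any ball $B$ of radius $\rho\lesssim R$ centered at a point of $B_R$, writing $A:=B\cap B_R$ and $(w)_A$ for the average of $w$ over $A$,
\begin{equation*}
	\frac{1}{|A|}\int_{A}\bigl|w(z)-(w)_A\bigr|\,dz\le C\,[w]_{W^{\gamma,q}(B_R)}\,\rho^{\gamma-\frac{N}{q}}.
\end{equation*}
To see this, bound the left-hand side by the double average $|A|^{-2}\iint_{A\times A}|w(z)-w(z')|\,dz\,dz'$, insert the factor $|z-z'|^{(N+\gamma q)/q}\,|z-z'|^{-(N+\gamma q)/q}$ in the integrand, and apply H\"older's inequality with exponents $q$ and $q'$: the factor carrying $|z-z'|^{-(N+\gamma q)/q}$ is controlled by $[w]_{W^{\gamma,q}(B_R)}$ because $A\subset B_R$, while a polar-coordinates computation gives $\iint_{A\times A}|z-z'|^{(N+\gamma q)/(q-1)}\,dz\,dz'\le C\rho^{\,2N+(N+\gamma q)/(q-1)}$. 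Collecting the powers of $\rho$ and using the measure-density bound $|A|\ge c_N\rho^N$ (valid because $B_R$ is convex and the center of $B$ lies in $B_R$) produces exactly the exponent $\gamma-\tfrac{N}{q}$.

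Next, fix $x\in B_R$ and run a dyadic telescoping: with $\rho_k=\rho\,2^{-k}$ and $A_k:=B_{\rho_k}(x)\cap B_R$ one has $A_{k+1}\subset A_k$ and $|A_k|\le C_N|A_{k+1}|$, so the inequality above yields $|(w)_{A_{k+1}}-(w)_{A_k}|\le C[w]_{W^{\gamma,q}(B_R)}\rho_k^{\gamma-N/q}$. Summing the geometric series $\sum_{k\ge0}\rho_k^{\gamma-N/q}$, which converges \emph{precisely because} $\gamma-\tfrac{N}{q}>0$, shows that $(w)_{A_k}$ converges to the Lebesgue value $w(x)$ of the continuous representative and that $|w(x)-(w)_{B_\rho(x)\cap B_R}|\le C[w]_{W^{\gamma,q}(B_R)}\rho^{\gamma-N/q}$. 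Finally, given $x,y\in B_R$, set $\rho=|x-y|$; since $B_\rho(y)\cap B_R\subset B_{2\rho}(x)\cap B_R$, two applications of the averaged Poincaré inequality through this common set give $|(w)_{B_\rho(x)\cap B_R}-(w)_{B_\rho(y)\cap B_R}|\le C[w]_{W^{\gamma,q}(B_R)}\rho^{\gamma-N/q}$, and the triangle inequality
\begin{equation*}
	|w(x)-w(y)|\le|w(x)-(w)_{B_\rho(x)\cap B_R}|+|(w)_{B_\rho(x)\cap B_R}-(w)_{B_\rho(y)\cap B_R}|+|(w)_{B_\rho(y)\cap B_R}-w(y)|
\end{equation*}
closes the argument.

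The main obstacle is the exponent bookkeeping in the averaged Poincaré step, where one must track the powers of $\rho$ carefully so that the H\"older estimate delivers exactly $\rho^{\gamma-N/q}$, together with the minor nuisance of balls protruding from $B_R$, handled uniformly by intersecting with $B_R$ and invoking the measure-density bound. The only place the hypothesis $\gamma q>N$ enters is the summability of the dyadic telescoping series; the rest is the standard Morrey embedding argument.
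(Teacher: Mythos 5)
Your proof is correct. The paper itself does not prove Lemma~\ref{lemMorrey1}; it is quoted from the references \cite{NO23, BDL2401}, so there is no in-paper argument to compare against. Your Campanato-style route — averaged fractional Poincar\'e inequality via H\"older with exponents $(q,q')$, followed by dyadic telescoping of averages and a triangle inequality to compare two points — is the standard proof of this Morrey-type embedding, and the exponent bookkeeping checks out: the double integral over $A\times A$ of $|z-z'|^{(N+\gamma q)/(q-1)}$ is indeed $O(\rho^{2N+(N+\gamma q)/(q-1)})$, and after raising to the power $(q-1)/q$, dividing by $|A|^2\gtrsim\rho^{2N}$, the exponent collapses to $\gamma-N/q$ exactly as claimed. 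The measure-density bound $|B_\rho(x)\cap B_R|\ge c_N\rho^N$ for $x\in B_R$ and $\rho\le 2R$ that you invoke does hold with a constant depending only on $N$ (take a small ball centered at $x(1-\rho/(2R))$), so the protruding-ball issue is handled correctly. The remark that $\gamma q>N\ge 2$ together with $\gamma<1$ forces $q>2$ (so $q'$ is finite) is accurate, though all that is really needed is $q>1$, which already follows from $\gamma q>N\ge 1$ and $\gamma<1$.
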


\begin{lemma}[\text{Embedding $W^{1,q}\hookrightarrow W^{\gamma,q}$}]
	\label{embed}
	Let $q\ge1$ and $\gamma\in(0,1)$. Then, for any $w\in W^{1,q}(B_R)$, we have
	\begin{align}
		\label{embedd}
		\int_{B_R}\int_{B_R}\frac{|w(x)-w(y)|^q}{|x-y|^{N+\gamma q}}\,dxdy\le C(N)\frac{R^{(1-\gamma)q}}{(1-\gamma)q}\int_{B_R}|\nabla w|^q\,dx.
	\end{align}
\end{lemma}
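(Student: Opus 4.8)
The plan is to prove \eqref{embedd} first for $w\in C^{1}(\overline{B_R})$ and then recover the general case by density. For smooth $w$ and $x,y\in B_R$, the convexity of the ball lets us write
\[
w(x)-w(y)=\int_0^1\nabla w\big(y+t(x-y)\big)\cdot(x-y)\,dt ,
\]
so that, by Jensen's inequality in the variable $t$,
\[
|w(x)-w(y)|^q\le |x-y|^q\int_0^1\big|\nabla w\big(y+t(x-y)\big)\big|^q\,dt .
\]
Substituting this bound into the left-hand side of \eqref{embedd} reduces matters to estimating a triple integral in which the singular kernel is improved to $|x-y|^{-(N+\gamma q-q)}$.

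Next I would perform the change of variables $h=x-y$ in the outer double integral. Since $x,y\in B_R$ forces $|h|<2R$, and since $y+th$ lies in $B_R$ for every $t\in(0,1)$ whenever $y,y+h\in B_R$ (again by convexity, which is exactly what keeps $\nabla w$ evaluated only inside $B_R$), one gets
\[
\int_{B_R}\int_{B_R}\frac{|w(x)-w(y)|^q}{|x-y|^{N+\gamma q}}\,dx\,dy
\le\int_{\{|h|<2R\}}\frac{1}{|h|^{N+\gamma q-q}}\left(\int_0^1\int_{\{y\in B_R:\,y+h\in B_R\}}\big|\nabla w(y+th)\big|^q\,dy\,dt\right)dh .
\]
For fixed $h$ and $t$ the translation $z=y+th$ is measure-preserving and sends $\{y\in B_R:y+h\in B_R\}$ into $B_R$, so the innermost integral is at most $\int_{B_R}|\nabla w|^q\,dz$, and the $t$-integral contributes only a factor $1$. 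It then remains to compute $\int_{\{|h|<2R\}}|h|^{-(N+\gamma q-q)}\,dh$ in polar coordinates: the radial exponent equals $q(1-\gamma)-1>-1$, hence the integral equals $\omega_{N-1}(2R)^{q(1-\gamma)}/\big(q(1-\gamma)\big)$; absorbing the factor $2^{q(1-\gamma)}$ into the constant yields \eqref{embedd} for $C^1$ functions, with $C=C(N)$.

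Finally, for an arbitrary $w\in W^{1,q}(B_R)$ I would choose $w_j\in C^\infty(\overline{B_R})$ with $w_j\to w$ in $W^{1,q}(B_R)$ (available since the ball is a Lipschitz, hence extension, domain), pass to a subsequence with $w_j\to w$ a.e.\ in $B_R$, apply Fatou's lemma to the left-hand side of \eqref{embedd} and use $\|\nabla w_j\|_{L^q(B_R)}\to\|\nabla w\|_{L^q(B_R)}$ on the right-hand side. I do not expect a genuine obstacle here: the only points requiring care are the bookkeeping that ensures $\nabla w$ is never evaluated outside $B_R$ (handled by convexity) and the lower semicontinuity of the Gagliardo seminorm under a.e.\ convergence (handled by Fatou); everything else is a routine computation.
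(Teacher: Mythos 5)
Your proof is the standard argument and is correct in substance: the fundamental theorem of calculus along a chord of the convex ball, Jensen's inequality in the parameter $t$, the change of variables $h=x-y$ followed by the measure-preserving translation $z=y+th$, polar coordinates in $h$, and finally density plus Fatou to pass from $C^1(\overline{B_R})$ to $W^{1,q}(B_R)$. The paper does not prove this lemma at all (it is quoted from \cite{NO23,BDL2401}), so there is no in-text proof to compare against, but the route you take is the natural one and each step checks out.

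The one inaccuracy is the remark that the factor $2^{q(1-\gamma)}$ can be ``absorbed into the constant'' $C(N)$: this factor depends on $q$ and $\gamma$, and since $q(1-\gamma)$ ranges over all of $(0,\infty)$ under the stated hypotheses, $2^{q(1-\gamma)}$ is not controlled by anything that depends only on $N$. Plugging the linear function $w(x)=x_1$ into \eqref{embedd} and localizing near antipodal points of $B_R$ shows this exponential dependence on $q(1-\gamma)$ is genuinely there; the clean way to state the outcome of your computation is either $C(N)\,2^{q(1-\gamma)}\,\frac{R^{(1-\gamma)q}}{(1-\gamma)q}$ or, equivalently, $C(N)\,\frac{(2R)^{(1-\gamma)q}}{(1-\gamma)q}$. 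The paper's own phrasing of the constant as $C(N)$ carries the same imprecision, and it is harmless where the lemma is used (the constant in Lemma~\ref{lem20} is already allowed to depend on $q$), but you should not claim $C(N)$ when your computation visibly produces a $q$-dependent factor.
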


\begin{lemma}[\text{Embedding $W^{1, q}\hookrightarrow C^{0,1-\frac{N}{q}}$}]
	\label{Morrey2}
	Let $q> N$. There exists a constant $C=C(N, q)$ such that for any $w\in W^{1,q}(B_R)$,
	\begin{align*}
		[w]_{C^{0,1-\frac{N}{q}}(B_R)}\le C\|\nabla w\|_{L^q(B_R)}.
	\end{align*}
\end{lemma}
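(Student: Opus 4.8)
\textbf{Proof proposal for Lemma \ref{Morrey2}.}

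The plan is to reduce the statement to the classical Morrey embedding theorem on balls, using the Sobolev--Poincar\'{e} inequality and a pointwise estimate of the H\"{o}lder seminorm via the mean oscillation on small balls. First I would recall the standard integral form of Morrey's inequality: for $q>N$, any $w\in W^{1,q}(B)$ on a ball $B$ satisfies the pointwise bound
\begin{align*}
	|w(x)-w(y)| \le C(N,q)\, |x-y|^{1-\frac{N}{q}} \left(\int_{B}|\nabla w|^q\,dz\right)^{1/q},\qquad x,y\in B.
\end{align*}
A clean route to this is the representation $w(x)-w(y) = \int_0^1 \nabla w(\gamma(t))\cdot(x-y)\,dt$ along segments, averaged over an auxiliary ball, which gives the Riesz-potential estimate $|w(x)-\fint_{B_\rho(x)}w| \le C\int_{B_\rho(x)}\frac{|\nabla w(z)|}{|x-z|^{N-1}}\,dz$; H\"{o}lder's inequality with exponent $q>N$ then makes the potential finite and homogeneous of degree $1-N/q$ in $\rho$. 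Subtracting two such estimates at $x$ and $y$ with $\rho\simeq|x-y|$ and using that the two averaging balls overlap in a fixed fraction of their volume yields the displayed pointwise inequality.

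The main step is then purely bookkeeping: divide both sides of the pointwise bound by $|x-y|^{1-N/q}$ and take the supremum over $x\ne y$ in $B_R$, which gives exactly
\begin{align*}
	[w]_{C^{0,1-\frac{N}{q}}(B_R)} = \sup_{\substack{x,y\in B_R\\ x\ne y}} \frac{|w(x)-w(y)|}{|x-y|^{1-\frac{N}{q}}} \le C(N,q)\,\|\nabla w\|_{L^q(B_R)},
\end{align*}
with $C$ depending only on $N$ and $q$ (the geometry of the ball contributes only dimensional constants, and no $R$-dependence survives since $1-N/q$ is precisely the scaling exponent that balances the gradient norm). One should note that the pointwise estimate a priori holds for smooth $w$; the general case follows by density of $C^\infty(\overline{B_R})$ in $W^{1,q}(B_R)$ together with the fact that the inequality passes to locally uniform limits. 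Alternatively, since the result is quoted from \cite{L19}, I would simply cite it there; but the self-contained argument above is short enough to include.

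The only mild obstacle is making the overlap argument for the two averaging balls precise when $x$ and $y$ are close to the boundary of $B_R$ --- one must choose the averaging radius $\rho$ small relative to $\operatorname{dist}(\{x,y\},\partial B_R)$ is \emph{not} available, so instead one works on the convex set $B_R$ directly and uses that segments between points of $B_R$ stay in $B_R$, replacing averaging balls by averaging over the lens-shaped intersection $B_R\cap B_\rho(x)$; convexity of $B_R$ guarantees this intersection has measure comparable to $\rho^N$ uniformly. This is a routine adaptation and does not affect the constant's dependence. No new ideas beyond the classical embedding are needed.
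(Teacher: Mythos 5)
Your argument is correct: the paper does not prove this lemma but simply quotes it from \cite{L19}, and what you outline is precisely the classical potential-estimate proof (as in Gilbarg--Trudinger, Lemma~7.16) — the bound $|w(x)-\fint_S w|\le C\int_S |\nabla w(z)|\,|x-z|^{1-N}\,dz$ on the convex lens $S=B_R\cap B_{2|x-y|}(x)$, followed by H\"older's inequality with $(N-1)q'<N$. Your observations that convexity of $B_R$ gives $|S|\ge c(N)|x-y|^N$ uniformly up to the boundary and that the exponent $1-\frac{N}{q}$ makes the estimate scale-invariant (hence $C$ independent of $R$) are exactly the points that need checking, and they hold.
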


\subsection{Algebraic inequalities.}
For $\gamma>0$ and $a\in\mathbb{R}$, we define
\begin{align*}
	J_\gamma(a):=|a|^{\gamma-2}a.
\end{align*}

The following two algebraic inequalities, taken from \cite{BDL2401} and \cite{BDL2402}, will be useful in deriving energy estimates for weak solutions.

\begin{lemma}
	\label{lem2}
	For any $\gamma>0$ and any $a,b\in \mathbb{R}$, we have
	\begin{align*}
		C_1\left( |a|+|b|\right) ^{\gamma-1}|b-a|\le \left| J_{\gamma+1}(b)-J_{\gamma+1}(a)\right| \le C_2\left( |a|+|b|\right) ^{\gamma-1}|b-a|,
	\end{align*}
	where $C_1=\min\left\lbrace \gamma,2^{1-\gamma}\right\rbrace $ and $C_2=\max\left\lbrace \gamma,2^{1-\gamma}\right\rbrace $.
\end{lemma}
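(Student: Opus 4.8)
\textbf{Proof strategy for Lemma \ref{lem2}.}

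The plan is to reduce the two-sided estimate to a one-variable monotonicity/convexity analysis of the scalar function $t \mapsto J_{\gamma+1}(t) = |t|^{\gamma-1} t$, whose derivative is $\gamma |t|^{\gamma-1}$. The natural first step is to normalize: by symmetry ($J_{\gamma+1}$ is odd) and by scaling (both sides are positively $\gamma$-homogeneous under $(a,b) \mapsto (\lambda a, \lambda b)$), I would assume without loss of generality that $|a| + |b| = 1$ and, say, $b \geq a$. One then writes the increment as an integral along the segment,
\begin{align*}
  J_{\gamma+1}(b) - J_{\gamma+1}(a) = \int_a^b \gamma |t|^{\gamma-1}\, dt,
\end{align*}
which is automatically nonnegative, so the absolute value on the middle term is harmless once orientation is fixed. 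The task is then to bound $\int_a^b \gamma|t|^{\gamma-1}\, dt$ above and below by constant multiples of $(|a|+|b|)^{\gamma-1}|b-a| = |b-a|$ under the normalization.

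For the \emph{upper bound}, the case $\gamma \geq 1$ and the case $0 < \gamma < 1$ must be handled separately because $|t|^{\gamma-1}$ is increasing in the former and decreasing (and singular at $0$) in the latter. When $\gamma \geq 1$, on the interval $[a,b] \subset [-1,1]$ one has $|t|^{\gamma-1} \leq 1 \leq (|a|+|b|)^{\gamma-1}\cdot$(something), but more precisely $|t| \leq \max\{|a|,|b|\} \leq |a|+|b|$, giving $\int_a^b \gamma|t|^{\gamma-1}\,dt \leq \gamma (|a|+|b|)^{\gamma-1}|b-a|$; this yields the constant $\gamma$. When $0 < \gamma < 1$, the integrand is largest near $0$, and the worst configuration is $a = -b$ (or an endpoint at $0$); here a direct computation $\int_{-b}^{b} \gamma|t|^{\gamma-1}\, dt = 2 b^\gamma$ against $(2b)^{\gamma-1}\cdot 2b = (2b)^\gamma$ produces the factor $2^{1-\gamma}$. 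A clean way to package both subcases uniformly is to invoke the elementary inequality $\big||x|^{\gamma-1}x - |y|^{\gamma-1}y\big| \leq 2^{1-\gamma}|x-y|^\gamma$ for $0<\gamma<1$ and $||x|^{\gamma-1}x-|y|^{\gamma-1}y| \leq \gamma(|x|+|y|)^{\gamma-1}|x-y|$ for $\gamma \geq 1$, then observe $|x-y|^\gamma \leq (|x|+|y|)^{\gamma-1}|x-y|$ when... wait, that last inequality goes the wrong way for $\gamma<1$, so instead I would keep the two regimes genuinely separate and just take $C_2 = \max\{\gamma, 2^{1-\gamma}\}$ as the envelope, verifying each regime gives its own constant.

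For the \emph{lower bound}, one again splits on $\gamma$. The key observation is a convexity-type comparison: $\int_a^b \gamma |t|^{\gamma-1}\, dt \geq \gamma \big(\min_{t\in[a,b]}|t|^{\gamma-1}\big)\cdot|b-a|$ when $\gamma\geq1$ won't suffice because the minimum can be $0$; instead, for $\gamma \geq 1$ I would use that $|t|^{\gamma-1}$ is convex-ish and compare the average value of $|t|^{\gamma-1}$ over $[a,b]$ with $(|a|+|b|)^{\gamma-1}$ up to the constant $2^{1-\gamma}$ (this is where $2^{1-\gamma} \leq 1$ helps, so $C_1 = 2^{1-\gamma}$ in that regime), and for $0<\gamma<1$ use monotonicity of $|t|^{\gamma-1}$ to bound it below by $(\max\{|a|,|b|\})^{\gamma-1} \geq (|a|+|b|)^{\gamma-1}$, which gives the factor $\gamma$. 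The main obstacle, and the only genuinely delicate point, is the behaviour near $t=0$ when $a$ and $b$ straddle the origin: there $|t|^{\gamma-1}$ either blows up ($\gamma<1$) or vanishes ($\gamma>1$), so one must extract the sharp constants precisely from the configuration $a=-b$ (for the singular case) or $a=0$ (for the vanishing case), and confirm that these extremal configurations indeed realize $\min\{\gamma, 2^{1-\gamma}\}$ and $\max\{\gamma, 2^{1-\gamma}\}$. Since the inequality is quoted verbatim from \cite{BDL2401, BDL2402}, I would ultimately cite it, but the sketch above is the route one takes to verify it independently.
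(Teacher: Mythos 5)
The paper does not prove this lemma at all: it is imported verbatim from the cited references, with the sentence ``The following two algebraic inequalities, taken from \cite{BDL2401} and \cite{BDL2402}, will be useful\ldots'' preceding it. You correctly recognize this at the end and say you would ultimately cite, which matches what the paper does, so strictly speaking you and the paper take the same route (no proof, just citation). Your verification sketch via the integral representation $J_{\gamma+1}(b)-J_{\gamma+1}(a)=\int_a^b\gamma|t|^{\gamma-1}\,dt$, normalization, and casework on $\gamma\gtrless1$ is the standard route and is sound in its overall structure; the extremal configuration $a=-b$ (giving equality up to the factor $2^{1-\gamma}$) and the endpoint configuration $a=0$ are indeed the ones that pin down the constants, and your identification of them is correct.

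One spot you should not wave your hands over if you actually wrote this out: the lower bound for $\gamma\geq 1$ when $a$ and $b$ have the \emph{same} sign. Your phrase ``$|t|^{\gamma-1}$ is convex-ish'' does not give the estimate directly — for $1<\gamma<2$ the map $t\mapsto t^{\gamma-1}$ on $[0,\infty)$ is concave, so a naive average-value-vs-midpoint comparison goes the wrong way, and a midpoint-cut argument only recovers the factor $\gamma\,2^{-\gamma}$ rather than $2^{1-\gamma}$. The clean split is: (i) if $a$ and $b$ straddle the origin, the lower bound reduces to the power-mean inequality $|a|^{\gamma}+|b|^{\gamma}\geq 2^{1-\gamma}(|a|+|b|)^{\gamma}$; (ii) if $0\le a<b$ (or symmetrically $a<b\le0$), one must show $b^{\gamma}-a^{\gamma}\geq 2^{1-\gamma}(a+b)^{\gamma-1}(b-a)$, which after the substitution $s=a/b\in[0,1)$ becomes a one-variable inequality whose boundary values ($s=0$ gives $1\geq2^{1-\gamma}$, $s\to1^-$ gives $\gamma\,2^{1-\gamma}\geq2^{1-\gamma}$) are fine but whose interior still needs a short monotonicity/sign argument. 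None of this changes the conclusion, but it is a genuine step rather than a one-liner.
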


\begin{lemma}
	\label{lem3}
	Let $p\in(1,2)$ and $\delta\ge1$, for any $a,b,c,d\in\mathbb{R}$ and any $e,f\in \mathbb{R}^+$, we have
	\begin{align*}
		&\quad\left( J_p(a-b)-J_p(c-d)\right)\left( J_{\delta+1}(a-c)e^p-J_{\delta+1}(b-d)f^p\right)\\
		&\ge \frac{p-1}{2^{\delta+1}}\left( |a-b|+|c-d|\right)^{p-2}\left( |a-c|+|b-d|\right)^{\delta-1}\left| (a-c)-(b-d)\right|^2\left( e^p+f^p\right)\\
		&\quad -\left( \frac{2^{\delta+1}}{p-1}\right)^{p-1}\left( |a-c|+|b-d|\right)^{p+\delta-1}|e-f|^p . 
	\end{align*}
\end{lemma}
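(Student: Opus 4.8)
The plan is to reduce the claimed inequality to two separate algebraic facts and then combine them via Young's inequality. First I would expand the left-hand side by adding and subtracting a crossed term: write
\begin{align*}
	&\left( J_p(a-b)-J_p(c-d)\right)\left( J_{\delta+1}(a-c)e^p-J_{\delta+1}(b-d)f^p\right)\\
	&\quad=\left( J_p(a-b)-J_p(c-d)\right)\left( J_{\delta+1}(a-c)-J_{\delta+1}(b-d)\right)e^p\\
	&\qquad+\left( J_p(a-b)-J_p(c-d)\right)J_{\delta+1}(b-d)\left( e^p-f^p\right).
\end{align*}
For the first (diagonal) piece I would use the standard monotonicity-type estimate for the map $t\mapsto J_p(t)$ in the subquadratic range $p\in(1,2)$, namely $\left(J_p(\xi)-J_p(\eta)\right)(\xi-\eta)\ge (p-1)(|\xi|+|\eta|)^{p-2}|\xi-\eta|^2$, applied with $\xi=a-b$, $\eta=c-d$, together with Lemma~\ref{lem2} (with exponent $\delta$) to control $J_{\delta+1}(a-c)-J_{\delta+1}(b-d)$ from below by $C_1(|a-c|+|b-d|)^{\delta-1}|(a-c)-(b-d)|$; one must be careful here that $(a-b)-(c-d)=(a-c)-(b-d)$, so the two "difference" quantities coincide and the signs line up, producing exactly the positive term $\frac{p-1}{2^{\delta+1}}(|a-b|+|c-d|)^{p-2}(|a-c|+|b-d|)^{\delta-1}|(a-c)-(b-d)|^2 e^p$ after inserting the constant $C_1\ge 2^{1-\delta}\ge 2^{-\delta}$ and symmetrizing in $e^p,f^p$ via the trivial bound $e^p\le e^p+f^p$ (with the analogous manipulation also contributing an $f^p$ via a symmetric splitting, or simply absorbing it).

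For the second (off-diagonal) piece, which has no sign, I would bound it in absolute value: $|J_p(a-b)-J_p(c-d)|\le C(|a-b|+|c-d|)^{p-2}|(a-b)-(c-d)|$ by Lemma~\ref{lem2} again (exponent $p-1\in(0,1)$), and $|J_{\delta+1}(b-d)|=|b-d|^\delta\le (|a-c|+|b-d|)^\delta$. This gives an upper bound of the form $C(|a-b|+|c-d|)^{p-2}|(a-c)-(b-d)|\,(|a-c|+|b-d|)^\delta|e^p-f^p|$, and since $e,f\in\mathbb{R}^+$ one has $|e^p-f^p|\le p(e^{p-1}+f^{p-1})|e-f|$, or more simply $|e^p-f^p|\le C|e-f|(e+f)^{p-1}$. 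The remaining task is to absorb this term: apply Young's inequality with exponents $2$ and $2$ (or $2$ and its conjugate) to split off a factor $|(a-c)-(b-d)|$ matched against the gradient-like quantity $|e-f|$, so that one factor of $|(a-c)-(b-d)|^2(e^p+f^p)$-type is recovered with small constant (absorbed into the positive term, at the cost of shrinking $\frac{p-1}{2^{\delta+1}}$ to, say, half of it — hence the specific constants in the statement), while the leftover collects into $\left(\frac{2^{\delta+1}}{p-1}\right)^{p-1}(|a-c|+|b-d|)^{p+\delta-1}|e-f|^p$; the exponent bookkeeping $(p-2)+\delta+(p-1)\cdot\frac{?}{}$ works out to $p+\delta-1$ precisely because Young is applied in the $e,f$ variables rather than the $a,b,c,d$ variables.

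The main obstacle I anticipate is the bookkeeping of the weight exponents when applying Young's inequality: one has to split $|(a-c)-(b-d)|$ with weight $(|a-b|+|c-d|)^{p-2}$ on one side (to recombine with the diagonal term, whose weight is $(|a-b|+|c-d|)^{p-2}(|a-c|+|b-d|)^{\delta-1}$) and with weight $(|a-c|+|b-d|)^{\delta-(\delta-1)/2}\cdots$ on the other, and then verify that the residual power of $(|a-c|+|b-d|)$ is exactly $p+\delta-1$ and that the residual power of $|e-f|$ is exactly $p$; degenerate cases where some of $|a-b|+|c-d|$, $|a-c|+|b-d|$, or $e+f$ vanish need a separate (trivial) check, since then several terms are zero simultaneously. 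Once the exponent arithmetic is pinned down, the constants $\frac{p-1}{2^{\delta+1}}$ and $\left(\frac{2^{\delta+1}}{p-1}\right)^{p-1}$ emerge mechanically from $C_1=\min\{\delta,2^{1-\delta}\}$ in Lemma~\ref{lem2} and the choice of the Young parameter, so I would fix that parameter last, after seeing exactly how much of the positive term must be sacrificed.
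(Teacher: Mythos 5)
The paper does not prove Lemma~\ref{lem3}; it imports it verbatim from \cite{BDL2401,BDL2402}, so there is no in-text argument to compare against. Evaluating your sketch on its own terms: the decomposition into a ``diagonal'' piece (both $J_p$- and $J_{\delta+1}$-differences present) and an ``off-diagonal'' piece carrying $e^p-f^p$, the observation that $(a-b)-(c-d)=(a-c)-(b-d)$ forces both differences to share a sign, the two applications of Lemma~\ref{lem2} on the diagonal, and the bound $|e^p-f^p|\le C\,|e-f|(e+f)^{p-1}$ are all correct and are indeed the right ingredients. Note, however, that your diagonal step as phrased invokes the estimate $(J_p(\xi)-J_p(\eta))(\xi-\eta)\ge(p-1)(|\xi|+|\eta|)^{p-2}|\xi-\eta|^2$, whereas what is actually needed is the product of the two absolute differences $|J_p(A)-J_p(B)|\cdot|J_{\delta+1}(E)-J_{\delta+1}(F)|$; this works out only after you record explicitly that both factors have the sign of $A-B=E-F$.

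The genuine gap is the Young step. You propose Young with exponents $2$ and $2$ (the conjugate of $2$ is $2$), and simultaneously expect the leftover to be $|e-f|^p$; those two claims are incompatible for $p\ne 2$. To land on the stated residual $(|E|+|F|)^{p+\delta-1}|e-f|^p$ you must use Young with conjugate pair $\bigl(\tfrac{p}{p-1},\,p\bigr)$: take $X=P^{(p-1)/p}$ where $P=(|A|+|B|)^{p-2}(|E|+|F|)^{\delta-1}|E-F|^2(e^p+f^p)$, so that the entire factor $(e+f)^{p-1}\lesssim(e^p+f^p)^{(p-1)/p}$ coming from $|e^p-f^p|$ is absorbed into $X$ and only $|e-f|$ remains in $Y$; then $Y^p$ produces $(|A|+|B|)^{p-2}|E-F|^{2-p}(|E|+|F|)^{p+\delta-1}|e-f|^p$. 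Crucially, you then need the extra inequality $(|A|+|B|)^{p-2}|E-F|^{2-p}\le 1$, which follows from $|E-F|=|A-B|\le|A|+|B|$ together with $p<2$; without this step the spurious weight $(|A|+|B|)^{p-2}$ sits in the residual and cannot be removed (your remark that ``Young is applied in the $e,f$ variables rather than the $a,b,c,d$ variables'' actually points the wrong way: you must manipulate the $a,b,c,d$ weights too). With exponent pair $(\tfrac{p}{p-1},p)$ and this cancellation, the constants $\tfrac{p-1}{2^{\delta+1}}$ and $(\tfrac{2^{\delta+1}}{p-1})^{p-1}$ do fall out from a single choice of the Young parameter; with $(2,2)$ they do not. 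So the proposal is a sound roadmap but would fail as written at the absorption step.
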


Although the advantages of Lemma \ref{lem3} have not been fully exploited in this work, we still employ it in proving the main results so that the readers can more clearly grasp the influence of the $1$-structure on the problem.

\subsection{Finite difference technique}
For a direction vector $h\in \mathbb R^N$ and a measurable function $u\in \Omega\to \mathbb R$, we define the difference operator 
\begin{align*}
	\tau_hu(x):=u(x+h)-u(x), \quad x\in\mathbb{R}^N.
\end{align*}
To investigate the local behavior of weak solutions to problem \eqref{1.1}, we present several technical lemmas used in the application of the finite-difference method. These lemmas are summarized in \cite{GT83, BL17, DKLN23, D04,LZ25}.
\begin{lemma} 
	\label{lem5}
	Let $q>1$, $\gamma\in (0,1)$, $R>0$ and $d\in (0,R)$. Then, there exists a constant $C=C(N,q)$ such that for any $w\in W^{\gamma,q}(B_R)$,
	\begin{align*}
		\int_{B_{R-d}}|\tau_hw|^q\,dx\le C|h|^{\gamma q}\left[ (1-\gamma)[w]^q_{W^{\gamma,q}(B_R)}+\left( \frac{R^{(1-\gamma)q}}{d^q}+\frac{1}{\gamma d^{\gamma q}}\right)\|w\|^q_{L^q(B_R)} \right] 
	\end{align*}
	for any $h\in B_d\backslash \left\lbrace 0\right\rbrace$.
\end{lemma}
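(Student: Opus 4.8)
The final statement is Lemma~\ref{lem5}, a standard embedding-type estimate for finite differences of fractional Sobolev functions. Let me sketch a proof.

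\medskip

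\textbf{Proof proposal for Lemma~\ref{lem5}.}

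The plan is to reduce the estimate to two contributions: an \emph{interior} contribution controlled by the fractional seminorm, and a \emph{boundary} contribution near $\partial B_R$ where the translated point $x+h$ may leave $B_R$, controlled by the $L^q$-norm. First I would fix $h\in B_d\setminus\{0\}$ and $x\in B_{R-d}$, so that $x+h\in B_R$, and write $\tau_h w(x)=w(x+h)-w(x)$. The naive bound
\[
\int_{B_{R-d}}|\tau_h w|^q\,dx \le \int_{B_{R-d}}|w(x+h)-w(x)|^q\,dx
\]
cannot be estimated directly by $[w]_{W^{\gamma,q}(B_R)}^q$ because the integrand has a fixed shift $|h|$ rather than a variable one; the trick is to \emph{average} over a ball of intermediate translations. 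Concretely, for a parameter to be chosen, one inserts an averaging over $z\in B_{|h|}$ (or $B_{\rho}$ with $|h|\le \rho \le d$) via
\[
|w(x+h)-w(x)|^q \le C\!\left( \mint_{B_{|h|}(x)} |w(x+h)-w(z)|^q\,dz + \mint_{B_{|h|}(x)} |w(z)-w(x)|^q\,dz \right),
\]
using Jensen's inequality and the triangle inequality. Each averaged term is then of the form $|h|^{-N}\int\int |w(\xi)-w(\eta)|^q\,d\xi\,d\eta$ over a region where $|\xi-\eta|\le 2|h|$, hence bounded by $|h|^{\gamma q}$ times $\int\int |w(\xi)-w(\eta)|^q|\xi-\eta|^{-N-\gamma q}\,d\xi\,d\eta$ restricted to that region. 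Since $x\in B_{R-d}$ and $|h|<d$, all points $\xi,\eta$ involved lie in $B_R$, so this last double integral is dominated by $[w]_{W^{\gamma,q}(B_R)}^q$.

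\medskip

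The factor $(1-\gamma)$ and the terms involving $d$ require a bit more care, and this is where I expect the main bookkeeping to lie. The cleanest route is to invoke Lemma~\ref{embed}-type scaling only for the smooth part; more precisely, one typically proves the estimate first for $w\in C^\infty(\overline{B_R})$ and then extends by density. For smooth $w$ one writes $w(x+h)-w(x)=\int_0^1 \nabla w(x+th)\cdot h\,dt$, but since we only have fractional regularity, the correct analogue is a telescoping/dyadic decomposition: set $h_j = 2^{-j}h$ and observe $\tau_h w = \sum$ of controlled pieces, or alternatively mollify at scale $|h|$. Writing $w_\varepsilon$ for the mollification at scale $\varepsilon=|h|$, we split
\[
\|\tau_h w\|_{L^q(B_{R-d})} \le \|\tau_h(w-w_\varepsilon)\|_{L^q(B_{R-d})} + \|\tau_h w_\varepsilon\|_{L^q(B_{R-d})}.
\]
The first term is bounded by $2\|w-w_\varepsilon\|_{L^q(B_R)} \le C\varepsilon^{\gamma}(1-\gamma)^{1/q}[w]_{W^{\gamma,q}(B_R)}$ by the standard mollification estimate in fractional Sobolev spaces (this is where the factor $(1-\gamma)$ enters — it comes from $\int_0^1 r^{-1+(1-\gamma)q}\,dr \sim \frac{1}{(1-\gamma)q}$, and squaring/rooting tracks the constant). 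The second term is bounded using $|\tau_h w_\varepsilon(x)| \le |h|\,\|\nabla w_\varepsilon\|_{L^\infty}$ together with $\|\nabla w_\varepsilon\|_{L^q} \le C\varepsilon^{-1}\|w\|_{L^q(B_R)}$; here one must be careful that the $\varepsilon$-neighborhood of $B_{R-d}$ stays inside $B_R$, which holds since $\varepsilon=|h|<d$, and this produces the $d^{-q}$ and $d^{-\gamma q}$ factors after optimizing. Combining and choosing the mollification scale equal to $|h|$ yields exactly the stated bound.

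\medskip

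The main obstacle I anticipate is \emph{tracking the sharp dependence on $\gamma$} (the $(1-\gamma)$ prefactor and the $\frac{1}{\gamma d^{\gamma q}}$ term), since these are precisely the features that make the lemma useful in the subsequent Moser iteration — a crude proof would lose them. This forces one to be disciplined about which mollification/interpolation inequality is used and to compute the relevant one-dimensional integrals $\int_0^{|h|} r^{(1-\gamma)q-1}\,dr$ and $\int_{|h|}^{R} r^{-\gamma q - 1}\,dr$ explicitly rather than absorbing them into a generic constant. Everything else — Jensen, the triangle inequality, the density argument, and the containment checks $x+h\in B_R$ for $x\in B_{R-d}$, $|h|<d$ — is routine. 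Since the statement is cited from \cite{GT83, BL17, DKLN23, D04, LZ25}, I would in practice just refer the reader there for the full computation, but the mollification-splitting argument above is the self-contained proof.
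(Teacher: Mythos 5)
This lemma is not proved in the paper: it is stated and cited from \cite{GT83, BL17, DKLN23, D04, LZ25}, so there is no internal proof to compare against. Judged on its own merits, your mollification--splitting strategy is a reasonable route and correctly identifies the right scales ($\varepsilon=|h|$), but the sketch has a genuine gap around the sharp $(1-\gamma)$ prefactor, which is precisely the feature you yourself flag as the crux.

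The problem is in the step where you claim
\begin{align*}
\|w-w_\varepsilon\|_{L^q(B_R)} \le C\varepsilon^{\gamma}(1-\gamma)^{1/q}[w]_{W^{\gamma,q}(B_R)},
\end{align*}
justified by ``$\int_0^1 r^{-1+(1-\gamma)q}\,dr \sim \frac{1}{(1-\gamma)q}$.'' That integral equals $\frac{1}{(1-\gamma)q}$, which \emph{blows up} as $\gamma\to 1$; it produces a factor $\frac{1}{1-\gamma}$, i.e.\ the reciprocal of the one you need. Running the standard mollification estimate in physical space by Jensen's inequality (so $|w_\varepsilon(x)-w(x)|^q\le\int\phi_\varepsilon(z)|w(x-z)-w(x)|^q\,dz$, Fubini, and then the crude bound $|z|^{N+\gamma q}\le\varepsilon^{N+\gamma q}$) gives $\|w-w_\varepsilon\|_{L^q}^q\lesssim\varepsilon^{\gamma q}[w]^q_{W^{\gamma,q}}$ with \emph{no} small factor; and the same happens for the other piece, since the optimal bound is $\|\nabla w_\varepsilon\|_{L^q}\lesssim\varepsilon^{\gamma-1}[w]_{W^{\gamma,q}}$ rather than the $\varepsilon^{-1}\|w\|_{L^q}$ you use (your choice does not even produce the $|h|^{\gamma q}$ scaling when $|h|\ll d$, since it leaves a bare $\|w\|_{L^q}^q$). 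The $(1-\gamma)$ is real and sharp — one can confirm it on the Fourier side for $q=2$, where it comes from the normalization $\int\frac{|e^{is}-1|^2}{|s|^{1+2\gamma}}\,ds\sim\frac{C}{1-\gamma}$ — but the standard mollification lemma as you invoke it does not encode it. You would either need to cite a sharp version of that mollification estimate (with a proof tracking the $1-\gamma$), or switch to the weighted-in-scale argument of Brasco--Lindgren, where the $(1-\gamma)$ emerges from integrating a translation parameter $\rho$ across the range $(|h|,d)$ against $\rho^{-1-\gamma q}\,d\rho$, which is also what produces the $\frac{1}{\gamma d^{\gamma q}}$ and $\frac{R^{(1-\gamma)q}}{d^q}$ terms you are otherwise hand-waving past. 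Note, though, that in the subsequent use of this lemma (e.g.\ estimate \eqref{11.0}) the authors drop the $(1-\gamma)$ factor anyway, so your weaker bound would suffice for the downstream arguments even if it does not prove the lemma as stated.
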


\begin{lemma}
	\label{lem6}
	Let $q\ge1$, $\gamma>0$, $M\ge0$, $0<r<R$ and $d\in \left( 0,\frac{1}{2}(R-r)\right] $. Then, there exists a constant $C=C(q)$ such that whenever $w\in L^q(B_R)$ satisfies
	\begin{align*}
		\int_{B_r}|\tau_h(\tau_hw)|^q\,dx\le M^q|h|^{\gamma q}
	\end{align*}
	for any $h\in B_d\backslash \left\lbrace 0\right\rbrace $. Then in the case $\gamma\in (0,1)$, we have, for any $h\in B_{\frac{1}{2}d}\backslash\left\lbrace 0\right\rbrace $,
	\begin{align*}
		\int_{B_r}|\tau_hw|^q\le C(q)|h|^{\gamma q}\left[ \left( \frac{M}{1-\gamma}\right)^q+\frac{1}{d^{\gamma q}}\int_{B_R}|w|^q\,dx \right].
	\end{align*}
	In the case $\gamma>1$, we have
	\begin{align*}
		\int_{B_r}|\tau_hw|^q\,dx\le C(q)|h|^q\left[ \left( \frac{M}{\gamma-1}\right)^qd^{(\gamma-1)q}+\frac{1}{d^q}\int_{B_R}|w|^q\,dx \right].
	\end{align*}
	In the case $\gamma=1$, we have
	\begin{align*}
		\int_{B_r}|\tau_hw|^q\,dx\le C(q)|h|^{\lambda q}\left[ \left( \frac{M}{1-\lambda}\right)^qd^{(1-\lambda)q}+\frac{1}{d^{\lambda q}}\int_{B_R}|w|^q\,dx \right] 
	\end{align*}
	for any $0<\lambda<1$.
\end{lemma}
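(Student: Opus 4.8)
\textbf{Proof proposal for Lemma \ref{lem6}.}

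The plan is to reconstruct the finite-difference quotient argument that upgrades a bound on the second difference $\tau_h(\tau_h w)$ to a bound on the first difference $\tau_h w$. The key combinatorial identity is the telescoping formula
\begin{align*}
	\tau_{2^{k}h} w = \sum_{j=0}^{k-1} 2^{\,?}\, \tau_{2^j h}\!\left(\tau_{2^j h} w\right) + 2^{?}\,\tau_h w,
\end{align*}
or more precisely the relation $\tau_{2h}w = \tau_h(\tau_h w) + 2\tau_h w(\cdot) $ rewritten as $\tau_h w(x) = \tfrac12\tau_{2h}w(x) - \tfrac12 \tau_h(\tau_h w)(x)$; iterating this $k$ times expresses $\tau_h w$ as a weighted sum of terms $2^{-j}\tau_{2^{j}h}(\tau_{2^{j}h} w)$ for $j=0,\dots,k-1$ plus a remainder $2^{-k}\tau_{2^k h} w$. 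First I would fix $h\in B_{d/2}\setminus\{0\}$, choose $k$ to be the largest integer with $2^k|h|\le d$ (so $2^k|h|$ is comparable to $d$ but never leaves the ball where the hypothesis applies, after shrinking radii appropriately), and apply the triangle inequality in $L^q(B_r)$ — possibly on a slightly shrunk ball to absorb the translations — to the telescoping identity. Each second-difference term contributes, by hypothesis, at most $M|h|^\gamma \cdot 2^{j\gamma}$ times a harmless constant; the geometric factor $2^{-j}$ multiplying it makes the sum $\sum_j 2^{j(\gamma-1)}M|h|^\gamma$, whose behavior depends crucially on the sign of $\gamma-1$. The remainder term $2^{-k}\|\tau_{2^k h} w\|_{L^q} \le 2^{-k}\cdot 2\|w\|_{L^q(B_R)}$, and since $2^{-k}\sim |h|/d$, this produces the $\tfrac{1}{d^{\gamma q}}\int_{B_R}|w|^q$-type term after matching powers of $|h|$.

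The three cases then split according to how the geometric series $\sum_{j=0}^{k-1} 2^{j(\gamma-1)}$ is summed. When $\gamma\in(0,1)$ the series converges as $k\to\infty$ with sum bounded by $\frac{1}{1-2^{\gamma-1}}\le \frac{C}{1-\gamma}$, giving the factor $(M/(1-\gamma))^q$ and the exponent $|h|^{\gamma q}$ directly. When $\gamma>1$ the series is dominated by its last term $2^{(k-1)(\gamma-1)}\sim (d/|h|)^{\gamma-1}$, so the second-difference contribution becomes $M|h|^\gamma\cdot(d/|h|)^{\gamma-1} = M|h|\,d^{\gamma-1}$, matching the claimed $|h|^q(M/(\gamma-1))^q d^{(\gamma-1)q}$ once one tracks the precise constant from $\sum 2^{j(\gamma-1)}\le \frac{2^{k(\gamma-1)}}{2^{\gamma-1}-1}$ and $2^{\gamma-1}-1\ge c(\gamma-1)$; here the remainder term carries the $\frac{1}{d^q}\int_{B_R}|w|^q$. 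When $\gamma=1$ the series is $\sum_{j=0}^{k-1}1 = k \sim \log(d/|h|)$, and to convert the logarithmic loss into a power one trades a sliver of the exponent: writing $|h|^{q}\log(d/|h|)^q \le C_\lambda |h|^{\lambda q} d^{(1-\lambda)q}$ for any $\lambda\in(0,1)$ (since $t\log(1/t)\le \frac{1}{e(1-\lambda)} t^\lambda$ for $t\in(0,1]$), which is exactly the source of the $(M/(1-\lambda))^q$ and $|h|^{\lambda q}$ in the $\gamma=1$ statement.

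The main obstacle, and the step requiring the most care, is the bookkeeping of the radii under iterated translations: at stage $j$ one translates by $2^j h$, and since $2^j|h|$ can be as large as $d$, one must verify that all the points $x, x+2^j h, x+2^{j+1}h$ stay inside $B_R$ when $x\in B_r$ and $d\le \tfrac12(R-r)$ — this is precisely why the hypothesis on the second difference is posed on $B_r$ while the conclusion still references $\int_{B_R}$, and why $d$ is restricted to $(0,\tfrac12(R-r)]$. One must set up an intermediate nested sequence of balls $B_r \subset B_{\rho_k}\subset\cdots\subset B_{\rho_0}\subset B_R$ or, more cleanly, apply the hypothesis after a single change of variables and only then iterate, keeping the total displacement bounded by $d$ throughout. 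Once the radii are handled correctly, the remaining estimates are the routine geometric-series and elementary-inequality computations sketched above, and the constant $C(q)$ arises solely from the $q$-th power triangle inequality $\big(\sum a_j\big)^q \le C(q)\sum a_j^q$ (or Minkowski's inequality) applied to the telescoping sum.
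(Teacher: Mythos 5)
Your reconstruction is correct in all essential respects, and it is worth noting that the paper itself does not prove this lemma---it quotes it as standard background from \cite{GT83, BL17, DKLN23, D04, LZ25}---so the comparison is against the literature, not against an in-paper proof. The dyadic telescoping identity
\begin{align*}
	\tau_h w \;=\; 2^{-k}\,\tau_{2^k h} w \;-\; \sum_{j=0}^{k-1} 2^{-(j+1)}\,\tau_{2^j h}\bigl(\tau_{2^j h} w\bigr)
\end{align*}
(you wrote the coefficient as $2^{-j}$; the correct weight is $2^{-(j+1)}$, but the factor of $2$ is absorbed into $C(q)$) together with Minkowski's inequality, the hypothesis applied at scales $2^j h$ with $j=0,\dots,k-1$, and the three-way split of the geometric series $\sum_j 2^{j(\gamma-1)}$ is precisely the standard route. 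Your identification of the constants is also correct: $\tfrac{1}{1-2^{\gamma-1}}\lesssim\tfrac{1}{1-\gamma}$ when $\gamma<1$, $\tfrac{1}{2^{\gamma-1}-1}\lesssim\tfrac{1}{\gamma-1}$ when $\gamma>1$, and the elementary bound $t\log(1/t)\le\tfrac{1}{e(1-\lambda)}\,t^\lambda$ when $\gamma=1$, and choosing $k$ maximal with $2^k|h|<d$ yields $2^{-k}\le 2|h|/d$, which produces the tail terms.

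One place where you overcomplicate slightly: the ``main obstacle'' you identify---bookkeeping the radii under iterated translations and possibly introducing a nested sequence of balls---is not actually needed. The hypothesis already bounds the second difference on $B_r$ for \emph{all} $|h|<d$, so each term $\tau_{2^j h}(\tau_{2^j h}w)$ with $2^j|h|<d$ is controlled on $B_r$ directly, with no shrinking. The only domain check required is for the remainder $\tau_{2^k h}w$: one needs $w$ defined up to $B_{r+2^k|h|}\subset B_{r+d}\subset B_R$, which is exactly what $d\le\tfrac12(R-r)$ guarantees. Other than this minor misplacement of emphasis, the argument is complete and correct.
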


\begin{lemma}
	\label{lem7}
	Let $q\ge1$, $\gamma\in (0,1]$, $M\ge1$ and $0<d<R$. Then, there exists a constant $C=C(N,q)$ such that whenever $w\in L^q(B_{R+d})$ satisfies
	\begin{align*}
		\int_{B_R}|\tau_hw|^q\,dx\le M^q|h|^{\gamma q}
	\end{align*}
	for any $h\in B_d\backslash\left\lbrace 0\right\rbrace $, $w\in W^{\beta,q}(B_R)$ for any $\beta\in (0,\gamma)$. Moreover, we have
	\begin{align*}
		[w]^q_{W^{\beta,q}(B_R)}\le C\left[ \frac{d^{(\gamma-\beta)q}}{\gamma-\beta}M^q+\frac{1}{\beta d^{\beta q}}\|w\|^q_{L^q(B_R)}\right]. 
	\end{align*}
\end{lemma}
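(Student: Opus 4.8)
\medskip

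\noindent\textbf{Proof strategy for Corollary~\ref{corG}.} The plan is to run \emph{one} more step of the iterative scheme of Section~\ref{sec4}, now with second-order difference quotients, on top of the $W^{1,q}_{\mathrm{loc}}$-regularity already granted by Theorem~\ref{th26}. Fix concentric balls $B_r\subset\subset B_{\rho}\subset\subset B_R\subset\subset\Omega$ with $R\in(0,1)$ (say $\rho=\tfrac{r+R}2$) and a cut-off $\psi\in C_c^\infty(B_{\rho})$ with $\psi\equiv1$ on $B_r$ and $|\nabla\psi|\le C(R-r)^{-1}$. Theorem~\ref{th26} gives $\|\nabla u\|_{L^q(B_{\rho})}^q\le C(R-r)^{-\kappa_0}\big(\mathcal T+[u]_{W^{s_p,p}(B_R)}+1\big)^q=:M_0^q$, hence, by the integral representation of first differences of a Sobolev function, $\int_{B_{\rho}}|\tau_hu|^q\,dx\le |h|^q M_0^q$ for $|h|$ sufficiently small. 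Throughout I would also use freely that $|\tau_hu|\le 2\|u\|_{L^\infty(B_R)}$: this is the extra leverage of the subquadratic regime, and it is also why one imposes $q\ge2$, since the weighted $L^2$-estimate below cannot be run at exponents below $2$.

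\noindent\textbf{Second-order energy estimate.} I would write the weak formulation \eqref{1.2} for $u$ and for $u(\cdot+h)$, subtract the two identities, and insert the test function $\varphi=\psi^pJ_{\delta+1}(\tau_hu)$, $\delta\ge1$ (admissible because $u\in L^\infty_{\mathrm{loc}}\cap W^{1,q}_{\mathrm{loc}}$). The $(-\Delta_p)^{s_p}$-part matches Lemma~\ref{lem3} exactly, with $a=u(x+h)$, $b=u(y+h)$, $c=u(x)$, $d=u(y)$, $e=\psi(x)$, $f=\psi(y)$; its lower bound produces a leading term carrying the prefactor $\big(|u(x{+}h){-}u(y{+}h)|+|u(x){-}u(y)|\big)^{p-2}$, which — and this is where $p<2$ genuinely helps — is bounded below by a positive constant $c(\|u\|_{L^\infty(B_R)})$ on the support of the integrand. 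Combined with Lemma~\ref{lem2}, this leading term controls from below, up to cross terms, the weighted Gagliardo energy $\big[\psi^{p/2}J_{\frac{\delta+3}{2}}(\tau_hu)\big]^2_{W^{s_pp/2,2}(B_{\rho})}$ (note $s_pp/2<1$). The remainder term in Lemma~\ref{lem3} and the cross terms are all of the form $\|\nabla\psi\|_\infty^{\,m}\iint_{B_{\rho}\times B_{\rho}}|x-y|^{m-N-s_pp}\big(|\tau_hu(x)|^{\ell}+|\tau_hu(y)|^{\ell}\big)\,dx\,dy$ with $m\in\{p,2\}$; they converge near the diagonal because $s_p<1$, and by Fubini together with the $L^\infty$- and $L^q$-bounds on $\tau_hu$ they are dominated by $C(R-r)^{-\kappa_1}|h|^q\big(\mathcal T+[u]_{W^{s_p,p}(B_R)}+1\big)^q$. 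For the $(-\Delta_1)^{s_1}$-part I would bound $|Z(x{+}h,y{+}h)-Z(x,y)|\le2$, estimate $|\varphi(x)-\varphi(y)|$ via Lemma~\ref{lem2} by $C(\|u\|_{L^\infty(B_R)})\big(|\tau_hu(x)-\tau_hu(y)|+\|\nabla\psi\|_\infty|x-y|\big)$, split the $s_1$-integral into $\{|x-y|<1\}$ and $\{|x-y|\ge1\}$, and invoke $u\in C^{0,1-N/q}_{\mathrm{loc}}$ from Lemma~\ref{Morrey2} (for $q>N$; otherwise the pointwise bound $|\tau_hu|\le2\|u\|_{L^\infty(B_R)}$) together with the automatic finiteness \eqref{assume1} of the $s_1$-tail; as noted in Remark~\ref{rem0}, this term is only a lower-order error and gets absorbed. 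Collecting, $\big[\psi^{p/2}J_{\frac{\delta+3}{2}}(\tau_hu)\big]^2_{W^{s_pp/2,2}(B_{\rho})}\le C(R-r)^{-\kappa_2}|h|^q\big(\mathcal T+[u]_{W^{s_p,p}(B_R)}+1\big)^q$ for all small $h$.

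\noindent\textbf{Iteration and passage to $\nabla u$.} Feeding this weighted estimate into Lemma~\ref{lem5} (applied to $\psi^{p/2}J_{\frac{\delta+3}{2}}(\tau_hu)$ with a second, independent increment $h'$) and Lemma~\ref{lem6}, then running the Moser-type iteration over $\delta$ as in the proof of Theorem~\ref{th26} but at the second-difference level — so that the differentiability sequence of the Remark is started at $\gamma_0=1$ — should yield $\int_{B_r}|\tau_h(\tau_hu)|^q\,dx\le C(R-r)^{-\kappa}\big(\mathcal T+[u]_{W^{s_p,p}(B_R)}+1\big)^q|h|^{(1+\alpha)q}$ for every $1+\alpha<\gamma_1:=1+\tfrac{p}{2q}(s_pp-p+1)$; the hypothesis $s_p>\tfrac{p-1}{p}$ is exactly what makes $\gamma_1>1$, and one cannot iterate a second time because the next step would require third-order differences, which the difference-quotient method does not accommodate — this is what pins the threshold at $\gamma_1$. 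Finally, since $1+\alpha\in(1,2)$ and $u\in W^{1,q}_{\mathrm{loc}}$, a standard difference-quotient lemma (a second-order difference bound of order $\gamma\in(1,2)$ combined with $W^{1,q}$-regularity gives a first-order difference bound of order $\gamma-1$ for $\nabla u$; cf.~\cite{GT83,BL17,L19}) upgrades the above to $\int_{B_r}|\tau_h(\nabla u)|^q\,dx\le C(R-r)^{-\kappa}\big(\mathcal T+[u]_{W^{s_p,p}(B_R)}+1\big)^q|h|^{\alpha q}$, and Lemma~\ref{lem7} then converts this into the asserted bound for $[\nabla u]_{W^{\alpha,q}(B_r)}$ for every $\alpha<\gamma_1-1=\tfrac{p}{2q}(s_pp-p+1)$, completing the proof.

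\noindent\textbf{Main obstacle.} The delicate part is the second-order energy estimate: keeping the remainder term of Lemma~\ref{lem3} and the $1$-growth contribution strictly lower-order, so as not to waste any of the $|h|$-gain needed to reach $\gamma_1$, while also tracking the (polynomial) dependence on $(R-r)^{-1}$ through the finitely many steps of the iteration. The negative exponent $p-2$ in the $p$-Laplacian weight, although here tamed by the local $L^\infty$-bound on $u$, interacts with the $(\delta-1)$-power and must be balanced correctly at each step; and the $1$-growth term — which in the complementary range $s_p\le\tfrac{p-1}{p}$ can even obstruct the existence of $\nabla u$ — has to be shown to be harmless in the present range $s_p\in(\tfrac{p-1}{p},1)$.
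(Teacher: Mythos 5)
Your proposal does not address the statement at hand. The statement is Lemma~\ref{lem7}, a purely technical difference-quotient lemma: from the uniform bound $\int_{B_R}|\tau_h w|^q\,dx\le M^q|h|^{\gamma q}$ for all $h\in B_d\setminus\{0\}$ one must deduce $w\in W^{\beta,q}(B_R)$ for every $\beta\in(0,\gamma)$ together with the explicit seminorm estimate. What you have written instead is a strategy for Corollary~\ref{corG} (Sobolev regularity of $\nabla u$ via a second-order energy estimate). Nothing in your text bears on Lemma~\ref{lem7}; worse, your final step explicitly \emph{invokes} Lemma~\ref{lem7} (``Lemma~\ref{lem7} then converts this into the asserted bound''), so even reinterpreted charitably the argument is circular as a proof of that lemma. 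Note that the paper itself does not prove Lemma~\ref{lem7} but imports it from the literature (\cite{GT83, BL17, DKLN23, D04, LZ25}); the expected proof is elementary and has nothing to do with the equation \eqref{1.1}.

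For the record, the missing argument is the standard one: split the Gagliardo seminorm over $B_R\times B_R$ into the near-diagonal region $|x-y|<d$ and the far region $|x-y|\ge d$. On the far region bound $|w(x)-w(y)|^q\le 2^{q-1}(|w(x)|^q+|w(y)|^q)$ and integrate $|x-y|^{-N-\beta q}$ over $\{|x-y|\ge d\}$ to obtain the term $\tfrac{C}{\beta d^{\beta q}}\|w\|^q_{L^q(B_R)}$. On the near-diagonal region substitute $h=y-x$, apply the hypothesis to get
\begin{align*}
\int_{|h|<d}|h|^{-N-\beta q}\int_{B_R}|\tau_h w|^q\,dx\,dh\le M^q\int_{|h|<d}|h|^{(\gamma-\beta)q-N}\,dh\le \frac{C(N)}{(\gamma-\beta)q}\,d^{(\gamma-\beta)q}M^q,
\end{align*}
which is exactly the first term of the claimed estimate. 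You need to supply this (or an equivalent) argument; the material you wrote belongs to a different part of the paper.
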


\begin{lemma}
	\label{lem16}
	Let $q>1$, $M>0$ and $0<d<R$. Then, any $w\in L^q(B_R)$ that satisfies
	\begin{align*}
		\int_{B_{R-d}}|\tau_hw|^q\,dx\le M^q|h|^q
	\end{align*}
	for any $h\in B_d\backslash \left\lbrace 0\right\rbrace $ is weakly differentiable in $B_{R-d}$. Moreover, we have
	\begin{align*}
		\int_{B_{R-d}}|Du|^q\,dx\le C(N)M^q.
	\end{align*} 
\end{lemma}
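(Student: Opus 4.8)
The plan is to prove the lemma via the classical characterization of first-order Sobolev functions by uniformly bounded difference quotients, whose only analytic input is the reflexivity of $L^q$ for $q>1$. Fix an index $i\in\{1,\dots,N\}$ and, for $0<|h|<d$, write $D_i^{h}w(x):=\tau_{he_i}w(x)/h$, where $e_i$ is the $i$-th coordinate direction. Since $|he_i|=|h|$, applying the hypothesis with the displacement $he_i$ gives $\|D_i^{h}w\|_{L^q(B_{R-d})}\le M$ uniformly in such $h$. Because $q>1$, along any sequence $h_k\to 0$ the family $\{D_i^{h_k}w\}$ is bounded in the reflexive space $L^q(B_{R-d})$, so after passing to a subsequence we have $D_i^{h_{k_j}}w\rightharpoonup v_i$ weakly in $L^q(B_{R-d})$ for some $v_i\in L^q(B_{R-d})$; by weak lower semicontinuity of the norm, $\|v_i\|_{L^q(B_{R-d})}\le M$.

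The next step is to identify $v_i$ with the weak partial derivative $\partial_i w$ on $B_{R-d}$. For $\varphi\in C_c^\infty(B_{R-d})$, translation invariance of Lebesgue measure yields the discrete integration-by-parts identity $\int D_i^{h}w\,\varphi\,dx=-\int w\,D_i^{-h}\varphi\,dx$, valid once $|h|$ is smaller than the distance from $\operatorname{supp}\varphi$ to $\partial B_{R-d}$. Sending $h=h_{k_j}\to 0$, the left-hand side converges to $\int v_i\,\varphi\,dx$ by weak convergence, while $D_i^{-h}\varphi\to\partial_i\varphi$ uniformly on the fixed compact support, so (as $w\in L^q(B_R)$) the right-hand side converges to $-\int w\,\partial_i\varphi\,dx$. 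Hence $\int v_i\,\varphi\,dx=-\int w\,\partial_i\varphi\,dx$ for all such $\varphi$, which is exactly the statement that $\partial_i w=v_i$ weakly on $B_{R-d}$.

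Performing this for each $i=1,\dots,N$ produces a weak gradient $Dw=(v_1,\dots,v_N)$ on $B_{R-d}$, and combining the bounds gives
$$\int_{B_{R-d}}|Dw|^q\,dx\le N^{q/2}\sum_{i=1}^N\|v_i\|_{L^q(B_{R-d})}^q\le C(N)M^q,$$
with, say, $C(N)=N^{q/2+1}$; thus $w\in W^{1,q}(B_{R-d})$ with the asserted estimate. The only delicate point, which is the closest thing here to an obstacle, is the bookkeeping near $\partial B_{R-d}$: for each test function one must take the increment $h$ small relative to the distance of $\operatorname{supp}\varphi$ to $\partial B_{R-d}$, so that the discrete integration by parts has no boundary contribution; since membership in $W^{1,q}(B_{R-d})$ is tested only against functions compactly supported in $B_{R-d}$, no loss occurs. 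The hypothesis $q>1$ is genuinely needed, as it is precisely reflexivity of $L^q$ that drives the weak compactness step; for $q=1$ the conclusion fails (e.g., for $BV$ functions).
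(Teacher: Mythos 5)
The paper does not give a proof of this lemma; it is stated and attributed to the references \cite{GT83, BL17, DKLN23, D04, LZ25}, where it is the classical characterization of $W^{1,q}$ by uniformly bounded difference quotients (e.g.\ Gilbarg--Trudinger, Lemma 7.24). Your argument is exactly that proof: uniform $L^q$ bound on the difference quotients $D_i^h w$ from the hypothesis applied to $h e_i$, weak compactness via reflexivity of $L^q$ for $q>1$, weak lower semicontinuity of the norm, identification of the weak limit with $\partial_i w$ through discrete integration by parts against $\varphi\in C_c^\infty(B_{R-d})$ with $|h|<\operatorname{dist}(\operatorname{supp}\varphi,\partial B_{R-d})$, and finally summation over $i$. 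All the steps are correct, including the remark on why $q>1$ is essential. Two cosmetic points: the lemma as printed writes $Du$ where it should read $Dw$; and the constant you produce, $N^{q/2+1}$, depends on $q$ as well as $N$ — the paper's $``C(N)"$ is slightly imprecise, since any way of passing from the componentwise bounds $\|\partial_i w\|_{L^q}\le M$ to a bound on $\||Dw|\|_{L^q}$ introduces a $q$-dependent dimensional factor. This does not affect the use of the lemma anywhere in the paper.
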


\begin{lemma}
	\label{lem21}
	Let $q\ge1$, $\gamma\in(0, 1)$, $M>0$, $R>0$ and $d\in(0,R)$.  For any $w\in W^{1,q}(B_{R+6d})$ that satisfies
	\begin{align*}
		\int_{R+4d}|\tau_h(\tau_hw)|^q\,dx\le M^q|h|^{q(1+\gamma)}
	\end{align*}
	for any $h\in B_d\left\lbrace 0\right\rbrace $, we have
	\begin{align*}
		\nabla w\in \left( W^{\beta,q}(B_R)\right)^N 
	\end{align*}
	for any $\beta\in(0, \gamma)$. Moreover, there exists a constant $C=C(N, q)$ such that
	\begin{align*}
		[\nabla w]^q_{W^{\beta,q}(B_R)}\le \frac{Cd^{q(\gamma-\beta)}}{(\gamma-\beta)\gamma^q(1-\gamma)^q}\left(M^q+\frac{(R+4d)^q}{\beta d^{q(1+\gamma)}}\int_{B_{R+4d}}|\nabla w|^q\,dx\right). 
	\end{align*}
\end{lemma}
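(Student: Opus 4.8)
The plan is to derive the conclusion from Lemma \ref{lem7}. First one produces, for an exponent $\gamma'$ that may be taken arbitrarily close to $\gamma$ from below, a clean first-order finite-difference bound for $\nabla w$; then Lemma \ref{lem7} converts it into the stated fractional Sobolev estimate. The difference bound is obtained from a telescoping identity combined with a self-improving iteration on the differentiability exponent.

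\textbf{Step 1 (reduction).} It is enough to show that for every $\gamma'\in(\beta,\gamma)$ there is a constant $M'$ with
\begin{align*}
\int_{B_{R+3d}}|\tau_h\nabla w|^{q}\,dx\le (M')^{q}\,|h|^{\gamma' q}\qquad\text{for all }h\in B_{d/4}\setminus\{0\},
\end{align*}
and with $(M')^{q}\le C(N,q)\,\gamma^{-q}(1-\gamma)^{-q}\big(M^{q}+(R+4d)^{q}d^{-q(1+\gamma)}\int_{B_{R+4d}}|\nabla w|^{q}\,dx\big)$. Indeed, applying Lemma \ref{lem7} to each component $\partial_l w$ with this exponent $\gamma'$ and the given $\beta$, and summing the $N$ contributions, yields $\nabla w\in(W^{\beta,q}(B_R))^{N}$ and, after absorbing the lower-order term $\beta^{-1}d^{-\beta q}\|\nabla w\|_{L^{q}(B_R)}^{q}$ coming from Lemma \ref{lem7} into the $\int_{B_{R+4d}}|\nabla w|^{q}$-term (legitimate since $d\le R+4d$ and $0<\beta<\gamma<1$), exactly the estimate claimed in the lemma.

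\textbf{Step 2 (telescoping identity).} Fix a coordinate direction $e_i$ and a scale $\rho\in(0,d/4]$. Since $w\in W^{1,q}(B_{R+6d})$, the rescaled difference quotients converge to $\partial_i w$ in $L^{q}$ on the relevant balls, so telescoping gives, in $L^{q}$,
\begin{align*}
\partial_i w=\frac{\tau_{\rho e_i}w}{\rho}-\sum_{j\ge 0}\frac{2^{j}}{\rho}\,\tau_{2^{-j-1}\rho e_i}\!\big(\tau_{2^{-j-1}\rho e_i}w\big),
\end{align*}
the series converging because, by the hypothesis, its $j$-th summand has $L^{q}$-norm at most $CM\rho^{\gamma}2^{-j\gamma}$. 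Apply $\tau_h$ with $h\in B_{d/4}\setminus\{0\}$ and take $L^{q}(B_{R+3d})$-norms. Using $\|\tau_h(\cdot)\|_{L^{q}}\le 2\|\cdot\|_{L^{q}}$ together with the hypothesis, the series contribution is bounded by $CM\gamma^{-1}\rho^{\gamma}$ (the summation $\sum_j 2^{-j\gamma}$ is what produces the factor $\gamma^{-1}$); using that $\tau_{\rho e_i}w\in W^{1,q}$ and the representation $\tau_h g(x)=\int_{0}^{1}\nabla g(x+sh)\cdot h\,ds$ with $g=\tau_{\rho e_i}w$, the leading term $\rho^{-1}\tau_h(\tau_{\rho e_i}w)$ is bounded by $\rho^{-1}|h|\,\|\tau_{\rho e_i}\nabla w\|_{L^{q}(B_{R+4d})}$. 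Hence, for every $i$ and all $\rho,|h|\in(0,d/4]$,
\begin{align*}
\|\tau_h\partial_i w\|_{L^{q}(B_{R+3d})}\le \frac{|h|}{\rho}\,\|\tau_{\rho e_i}\nabla w\|_{L^{q}(B_{R+4d})}+\frac{CM\rho^{\gamma}}{\gamma}.\tag{$\star$}
\end{align*}

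\textbf{Step 3 (iteration; the main obstacle).} A single application of $(\star)$ in which $\|\tau_{\rho e_i}\nabla w\|_{L^{q}}$ is estimated crudely by $2\|\nabla w\|_{L^{q}}$ and $\rho$ is then optimized already gives a bound of the type in Step 1, but only with the exponent $\gamma/(1+\gamma)$; this loss is unavoidable in one shot, because the remainder $\tau_h\tau_{\rho e_i}w$ with $\rho$ comparable to $|h|$ is a genuinely mixed second difference — not controlled by the hypothesis — and can only be removed through the $W^{1,q}$-norm of $\nabla w$. One therefore iterates: if $\int|\tau_k\nabla w|^{q}\le C_n|k|^{\theta_n q}$ is already known, then inserting it into $(\star)$ and optimizing in $\rho$ produces the same type of bound with $\theta_{n+1}=\gamma/(1+\gamma-\theta_n)$ and with no additive constant, since the optimal $\rho$ balances the two terms of $(\star)$, both then being proportional to $|h|^{\theta_{n+1}}$. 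Starting from $\theta_0=0$ one has $\theta_n\uparrow\gamma$, so finitely many iterations reach any prescribed $\gamma'\in(\beta,\gamma)$. The delicate point is the bookkeeping: each step enlarges the ball slightly through the $W^{1,q}$-passage, and one must arrange these increments — and the number of steps, which grows as $\gamma'\to\gamma$ — so that all domains remain inside $B_{R+6d}$; at the same time, the errors accrued along the iteration form a geometric series summing to $C(1-\gamma)^{-1}$, which is the source of the $(1-\gamma)^{-q}$ factor in the final constant (the $\gamma^{-q}$ being already present in $(\star)$). Carrying all constants through gives the bound on $(M')^{q}$ asserted in Step 1, and Step 1 then concludes the proof.
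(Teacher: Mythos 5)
First, context: the paper never proves Lemma \ref{lem21} --- it is imported from the cited references (\cite{GT83, BL17, DKLN23, D04, LZ25}) --- so your argument must stand on its own. Your Step 2 is sound: the dyadic telescoping identity and the resulting inequality $(\star)$ are correct, and feeding a first-order difference bound for $\nabla w$ into Lemma \ref{lem7} is the right way to finish. The difficulties are concentrated in Step 3.

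The central claim there --- that $\tau_h\tau_{\rho e_i}w$ is ``a genuinely mixed second difference, not controlled by the hypothesis,'' so that the loss to $\gamma/(1+\gamma)$ is ``unavoidable in one shot'' --- is false. The polarization identity
\begin{align*}
\tau_{2h}\tau_{2k}w(x-h-k)=\tau_{h+k}\left(\tau_{h+k}w\right)(x-h-k)-\tau_{h-k}\left(\tau_{h-k}w\right)(x-h+k)
\end{align*}
expresses every mixed second difference through two diagonal ones, and the hypothesis is assumed for \emph{all} $h\in B_d\setminus\{0\}$, so $\|\tau_h\tau_k w\|_{L^q}\le CM(|h|+|k|)^{1+\gamma}$ for $|h|,|k|\le d/2$. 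Taking $\rho=|h|$ in $(\star)$ and bounding the leading term by $\rho^{-1}\|\tau_h\tau_{\rho e_i}w\|_{L^q}\le CM|h|^{\gamma}$ gives $\|\tau_h\nabla w\|_{L^q}\le CM\gamma^{-1}|h|^{\gamma}$ in a single step, with no iteration, no intermediate appeal to $\|\nabla w\|_{L^q}$, and precisely the constant structure the lemma asserts. The iteration you substitute for this cannot recover that constant, and the constant is part of the statement: each step produces $C_{n+1}=C'(M/\gamma)^{a_n}C_n^{b_n}$ with $a_n+b_n=1$ and $b_n\le\gamma$, so the prefactors accumulate \emph{multiplicatively} to $(C')^{\sum_j \gamma^j}\approx (C')^{1/(1-\gamma)}$, exponential in $(1-\gamma)^{-1}$ rather than the polynomial $(1-\gamma)^{-q}$ claimed; your statement that the errors ``form a geometric series summing to $C(1-\gamma)^{-1}$'' mistakes this multiplicative accumulation for an additive one. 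In addition, the number of steps needed to reach $\gamma'$ behaves like $\log((\gamma-\gamma')^{-1})/\log(\gamma^{-1})$ and blows up as $\gamma\to1$, so the domain increments must shrink accordingly and inject further uncontrolled factors --- bookkeeping you flag but do not perform. As written, the argument can at best yield some finite constant $C(N,q,\beta,\gamma)$ for each fixed pair, not the explicit bound $Cd^{q(\gamma-\beta)}(\gamma-\beta)^{-1}\gamma^{-q}(1-\gamma)^{-q}(\cdots)$ with $C=C(N,q)$.
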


\section{Integral estimates}
\label{sec3}
In this section, we present two integral estimates that arise in the analysis of formulation \eqref{1.2}.
The estimate below, which originates from \cite[Lemma 3.2]{BDL2401}, is devoted to handling the nonlocal component of the $p$-growth term.

\begin{lemma}
	\label{lem8}
	Let $p\in(1, 2)$ and $s_p\in(0, 1)$. There exists a constant $C:=\frac{\tilde{C}(N,p)}{s_p}$ such that whenever $u\in L^{p-1}_{s_pp}(\mathbb{R}^N)$, $x_0\in\mathbb{R}^N$, $R>0$, $r\in(0,r)$ and $d\in \left( 0,\frac{1}{4}(R-r)\right] $, for any $x\in B_{\frac{1}{2}(R+r)}(x_0)$ and any $h\in B_d\backslash \left\lbrace 0\right\rbrace $, we have
	\begin{align*}
		&\quad\left| \int_{\mathbb{R}^N\backslash B_R(x_0)}\frac{J_p\left( u_h(x)-u_h(y)\right)-J_p\left( u(x)-u(y)\right) }{|x-y|^{N+s_pp}}\,dy\right|\\
		&\le \frac{C}{R^{s_pp}}\left( \frac{R}{R-r}\right)^{N+s_pp+1}\left( |\tau_hu(x)|^{p-1}+\frac{|h|}{R}\mathcal{T}^{p-1}\right),
	\end{align*}
	where
	\begin{align*}
		\mathcal{T}:=\|u\|_{L^\infty(B_{R+d}(x_0))}+\mathrm{Tail}(u;x_0,R+d).
	\end{align*}
\end{lemma}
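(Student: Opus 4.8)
\textbf{Proof proposal for Lemma \ref{lem8}.}

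The plan is to bound the nonlocal tail difference by splitting the integrand into two pieces according to the algebraic structure of $J_p$. Write, for $y\in\mathbb{R}^N\setminus B_R(x_0)$,
\begin{align*}
	J_p(u_h(x)-u_h(y))-J_p(u(x)-u(y))&=\Big[J_p(u_h(x)-u_h(y))-J_p(u(x)-u_h(y))\Big]\\
	&\quad+\Big[J_p(u(x)-u_h(y))-J_p(u(x)-u(y))\Big].
\end{align*}
Each bracket is a difference of $J_p$ evaluated at two real numbers whose arguments differ by $\tau_h u(x)$ and by $-\tau_h u(y)$ respectively. Since $p\in(1,2)$, the map $J_p=J_{(p-1)+1}$ is $(p-1)$-Hölder continuous: by Lemma \ref{lem2} with $\gamma=p-1$ one has $|J_p(a)-J_p(b)|\le C_2(|a|+|b|)^{p-2}|a-b|\le C(p)|a-b|^{p-1}$, using $p-2<0$ together with the elementary bound $(|a|+|b|)^{p-2}|a-b|\le |a-b|^{p-1}$. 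Hence the first bracket is controlled by $C|\tau_h u(x)|^{p-1}$ and the second by $C|\tau_h u(y)|^{p-1}$.

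Next I would integrate these two contributions against the kernel $|x-y|^{-N-s_pp}$ over $\mathbb{R}^N\setminus B_R(x_0)$. For the first term, $|\tau_h u(x)|^{p-1}$ is a constant in $y$, so it factors out and the remaining integral $\int_{\mathbb{R}^N\setminus B_R(x_0)}|x-y|^{-N-s_pp}\,dy$ is estimated using $x\in B_{(R+r)/2}(x_0)$, which forces $|x-x_0|\le\frac{R+r}{2}$ and hence $|x-y|\ge |y-x_0|-|x-x_0|\ge c\,(R-r)/R\cdot|y-x_0|$ type bounds; more precisely one shows $|x-y|\gtrsim \frac{R-r}{R}|y-x_0|$ for $y$ outside $B_R(x_0)$, giving a factor $R^{-s_pp}(R/(R-r))^{N+s_pp}$ after the standard radial computation (the $s_p^{-1}$ blow-up being exactly the source of the constant $\tilde C(N,p)/s_p$). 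For the second term, I would further split $u_h(y)=u(y)+\tau_h u(y)$ and use $|\tau_h u(y)|\le 2\|u\|_{L^\infty(B_{R+d}(x_0))}$ when $y+h$ and $y$ both avoid the far region — but the cleaner route is to estimate $\int_{\mathbb{R}^N\setminus B_R(x_0)}|\tau_h u(y)|^{p-1}|x-y|^{-N-s_pp}\,dy$ directly by a change of variables $y\mapsto y-h$ (shifting into $\mathbb{R}^N\setminus B_{R-d}(x_0)$) and then using $|u|^{p-1}$ bounded by $(\|u\|_{L^\infty(B_{R+d})}+|u|)^{p-1}$, so that the resulting integral reproduces $\mathrm{Tail}(u;x_0,R+d)^{p-1}+\|u\|_{L^\infty(B_{R+d})}^{p-1}\le \mathcal{T}^{p-1}$ up to the geometric factor; the extra $|h|/R$ gain comes from writing $|\tau_h u(y)|^{p-1}\le |h|^{p-1}\big(\ldots\big)$ is not available since $u$ is not differentiable, so instead one distributes one power: $|\tau_h u(y)|^{p-1}\le |\tau_h u(y)|\cdot|\tau_h u(y)|^{p-2}$ is likewise not directly useful, and the honest mechanism is that the second bracket, when integrated, is estimated through the mean value / telescoping argument of \cite[Lemma 3.2]{BDL2401} which trades the shift $h$ against one power of the kernel decay, yielding the stated $\frac{|h|}{R}\mathcal{T}^{p-1}$.

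The main obstacle I anticipate is precisely extracting the linear-in-$|h|/R$ gain on the tail term: a crude Hölder bound on $J_p$ only gives $|\tau_h u(y)|^{p-1}$, which integrated produces $\mathcal{T}^{p-1}$ but with no smallness in $h$. To recover the factor $|h|/R$ one must exploit that $y$ ranges over the far region while $x$ stays in $B_{(R+r)/2}(x_0)$, so that the kernel $|x-y|^{-N-s_pp}$ as a function of $y$ has Lipschitz-in-$h$ oscillation of relative size $|h|/|y-x_0|\lesssim |h|/R$; pairing this with the telescoping identity $J_p(u(x)-u_h(y))-J_p(u(x)-u(y))$ and a change of variables to align the two kernels is the technical heart, and it is carried out exactly as in the cited \cite[Lemma 3.2]{BDL2401}. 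The remaining steps — collecting the two contributions, absorbing $(R/(R-r))^{N+s_pp}$ into $(R/(R-r))^{N+s_pp+1}$, and tracking the constant's dependence $C=\tilde C(N,p)/s_p$ — are routine bookkeeping.
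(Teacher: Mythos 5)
The paper does not actually prove this lemma: it is imported from \cite[Lemma 3.2]{BDL2401}, as the sentence immediately before the statement says. So there is no in-paper proof to compare against; what you have written is an outline of how the cited argument goes, together with an honest admission that its core step is being deferred back to the reference.

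Your treatment of the first bracket is correct: with $a=u_h(x)-u_h(y)$ and $b=u(x)-u_h(y)$ one has $a-b=\tau_hu(x)$, and Lemma~\ref{lem2} with $\gamma=p-1$ gives $|J_p(a)-J_p(b)|\le C_2(|a|+|b|)^{p-2}|a-b|\le C(p)|\tau_hu(x)|^{p-1}$ since $p-2<0$ and $|a|+|b|\ge|a-b|$; integrating the kernel then produces the factor $s_p^{-1}R^{-s_pp}(R/(R-r))^{N+s_pp}$. But your treatment of the second bracket never closes. Once you replace $J_p(u(x)-u_h(y))-J_p(u(x)-u(y))$ by the bound $C|\tau_hu(y)|^{p-1}$, the cancellation that produces the factor $|h|/R$ is gone, and no subsequent change of variables can resurrect it — you say as much yourself. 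The actual mechanism is to change variables \emph{before} invoking any pointwise bound on $J_p$: substituting $z=y+h$ turns $\int_{\mathbb{R}^N\setminus B_R}J_p(u_h(x)-u_h(y))|x-y|^{-N-s_pp}\,dy$ into $\int_{\mathbb{R}^N\setminus(B_R+h)}J_p(u_h(x)-u(z))|x+h-z|^{-N-s_pp}\,dz$; one then telescopes only in the first argument to peel off the $|\tau_hu(x)|^{p-1}$ contribution, and finally compares $\int_{\mathbb{R}^N\setminus(B_R+h)}J_p(u(x)-u(z))|x+h-z|^{-N-s_pp}\,dz$ with $\int_{\mathbb{R}^N\setminus B_R}J_p(u(x)-u(z))|x-z|^{-N-s_pp}\,dz$, whose difference splits into a common-domain term controlled by the kernel oscillation $\bigl||x+h-z|^{-N-s_pp}-|x-z|^{-N-s_pp}\bigr|\lesssim|h|\,|x-z|^{-N-s_pp-1}$, plus two thin annular terms of width $|h|$; each yields the $|h|/R$ gain, and $|J_p(u(x)-u(z))|\le C(\|u\|_{L^\infty(B_{R+d})}^{p-1}+|u(z)|^{p-1})$ produces $\mathcal{T}^{p-1}$. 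That is the technical heart you correctly located but did not carry out.

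As a standalone blind proof, then, the proposal is incomplete: it reduces the lemma to precisely the reference from which it is quoted rather than reconstructing the estimate. That is a reasonable thing to do in a paper (indeed it is what the authors did), but it does not discharge the burden of proving the statement.
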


\begin{remark}
	Compared with the result in \cite{BL17}, Lemma \ref{lem8} ensures the existence of the gradient $Du$ for the fractional $p$-Laplacian in the subquadratic case, regardless of the value of $s$. However, because the 1-growth term is involved in our problem and the function $Z$ is not expected to yield meaningful information on $h$, the advantage of Lemma \ref{lem8} is diminished.
\end{remark}

Next, we obtain an energy estimate for weak solutions, which shows that the $p$-growth term is controlled by the $1$-growth term.

\begin{proposition}
	\label{pro9}
	Let $p\in(1,2)$, $s_1,s_p,\gamma\in(0,1)$ and $q\ge p$. There exists a constant $C$ depending on $N,p,q,s_1,s_p,\sigma,\gamma$ such that for any $u\in W^{\gamma,q}_{\mathrm{loc}}(\Omega)$ being a locally bounded weak solution to problem \eqref{1.1} in the sense of Definition \ref{def1}, we have, for any $R\in(0,1)$, $r\in(0,R)$, $d\in\left(\frac{1}{4}(R-r) \right] $, $B_{R+d}\equiv B_{R+d}(x_0)\subset\subset\Omega$, $h\in \overline{B_d}\backslash\left\lbrace 0\right\rbrace $ and $\sigma\in(0,\gamma p)$,
	\begin{align*}
		&\quad\int_{B_r}\int_{B_r}\frac{\left| J_{\frac{q}{p}+1}\left( \tau_hu(x)\right)-J_{\frac{q}{p}+1}\left( \tau_hu(y)\right) \right|^p }{|x-y|^{N+\frac{s_pp^2}{2}+\sigma\left( 1-\frac{p}{2}\right) }}\,dxdy\\
		&\le \left( \frac{C}{(R-r)^{N+s_pp+1}}\right)^\frac{p}{2}\left[ [u]^p_{W^{\gamma,q}(B_r)}\left( \int_{B_r}|\tau_hu|^q\,dx\right)^\frac{q-p}{q} \right]^{1-\frac{p}{2}}\\
		&\quad\times\left(\int_{B_R}|\tau_hu|^{q-p+1}\,dx+\int_{B_R}|\tau_hu|^q\,dx+|h|\mathcal{T}^{p-1}\int_{B_R}|\tau_hu|^{q-p+1}\,dx\right) ^\frac{p}{2}
	\end{align*}
	with
	\begin{align*}
		\mathcal{T}:=\|u\|_{L^\infty(B_{R+d}(x_0))}+\mathrm{Tail}(u;x_0,R+d).
	\end{align*}
\end{proposition}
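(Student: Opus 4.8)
The plan is to test the weak formulation \eqref{1.2} with a discrete-difference competitor built from the translated solution $u_h(x):=u(x+h)$, together with a cutoff $\psi$ that is $1$ on $B_r$, supported in $B_{\frac12(R+r)}$, with $|\nabla\psi|\lesssim (R-r)^{-1}$. The standard choice in this circle of ideas is $\varphi=\tau_{-h}\!\bigl(J_{\frac{q}{p}-\frac{p}{2}+1}(\tau_h u)\,\psi^{?}\bigr)$ adapted so that, after the usual ``discrete integration by parts'' (summing the equation for $u$ and the equation for $u_h$), the $p$-growth bilinear form produces a term of the form
\begin{align*}
\iint_{B_r\times B_r}\bigl(J_p(\tau_h u(x)-\tau_h u(y))\bigr)\bigl(J_{\delta+1}(\tau_hu(x))-J_{\delta+1}(\tau_hu(y))\bigr)\,\frac{dx\,dy}{|x-y|^{N+s_pp}},
\end{align*}
to which Lemma \ref{lem3} applies with $\delta=\frac{q}{p}-1\ge 0$ (here $e=f=1$ since no cutoff weight is carried inside, so the last ``$|e-f|^p$'' term in Lemma \ref{lem3} drops and only the first, good, term survives on the local piece). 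By Lemma \ref{lem2}, that good term controls from below exactly
\begin{align*}
c\iint_{B_r\times B_r}\frac{\bigl|J_{\frac{q}{p}+1}(\tau_hu(x))-J_{\frac{q}{p}+1}(\tau_hu(y))\bigr|^2\,(|\tau_hu(x)-\tau_hu(y)|)^{p-2}}{|x-y|^{N+s_pp}}\,dx\,dy,
\end{align*}
up to replacing $|\tau_hu(x)|+|\tau_hu(y)|$ weights, which is the quantity we must reach after a Hölder step (see below).

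The second ingredient is to bound the remaining contributions. The $1$-growth term contributes $\iint Z\,(\varphi(x)-\varphi(y))/|x-y|^{N+s_1}$; since $|Z|\le 1$ and \eqref{assume1} makes the tail harmless, this is dominated by $\|\nabla\psi\|_\infty$ times an $L^1$-type norm of the test function, ultimately by $(R-r)^{-1}\int_{B_R}|\tau_hu|^{q-p+1}$ after using $|J_{\delta+1}(\tau_hu)|=|\tau_hu|^{q/p-1}$ and $q/p-1=q/p-p/2+\cdots$; this is where the exponent $q-p+1$ in the statement comes from. The nonlocal tail of the $p$-growth term is handled verbatim by Lemma \ref{lem8}: testing against $J_{\delta+1}(\tau_hu)\psi$ pairs the kernel estimate of Lemma \ref{lem8} with $\int_{B_{\frac12(R+r)}}|J_{\delta+1}(\tau_hu)|$, producing exactly $R^{-s_pp}(R/(R-r))^{N+s_pp+1}\bigl(\int_{B_R}|\tau_hu|^{q-p+1}+|h|R^{-1}\mathcal T^{p-1}\int_{B_R}|\tau_hu|^{q-p+1}\bigr)$, matching the last factor on the right-hand side of the Proposition. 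Collecting, we arrive at an inequality of the form ``good local term $\le$ ($(R-r)$-power)$\times$(RHS last factor)''.

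The final—and I expect principal—step is converting the \emph{weighted} local term (with the degenerate weight $(|\tau_hu(x)-\tau_hu(y)|)^{p-2}$, which for $p<2$ blows up where the increment is small) into the clean seminorm
\begin{align*}
\iint_{B_r\times B_r}\frac{\bigl|J_{\frac{q}{p}+1}(\tau_hu(x))-J_{\frac{q}{p}+1}(\tau_hu(y))\bigr|^p}{|x-y|^{N+\frac{s_pp^2}{2}+\sigma(1-\frac p2)}}\,dx\,dy
\end{align*}
appearing on the left of the Proposition. This is done by Hölder's inequality with exponents $\frac{2}{p}$ and $\frac{2}{2-p}$: write the $p$-th power of the increment of $J_{\frac{q}{p}+1}$ divided by $|x-y|^{\cdots}$ as [weighted quantity]$^{p/2}\cdot$[compensating factor]$^{p/2}$, where the compensating factor, after reabsorbing the $(|\tau_hu(x)-\tau_hu(y)|)^{2-p}$, becomes (via Lemma \ref{lem2} again, now as an \emph{upper} bound) comparable to $|\tau_hu(x)-\tau_hu(y)|^{(2-p)\cdot\frac{q}{p}\cdot\frac{?}{?}}/|x-y|^{\cdots}$; the surviving double integral of that factor is estimated by $[u_h]_{W^{\gamma,q}}^{p}\bigl(\int|\tau_hu|^q\bigr)^{(q-p)/q}$ after one more Hölder step in the $(x,y)$-integral, which is precisely the first bracket $[u]^p_{W^{\gamma,q}(B_r)}(\int_{B_r}|\tau_hu|^q)^{\frac{q-p}{q}}$ to the power $1-\frac p2$ on the right-hand side. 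The bookkeeping of the kernel exponents is what forces the specific value $N+\frac{s_pp^2}{2}+\sigma(1-\frac p2)$ and the constraint $\sigma<\gamma p$ (so that $\gamma q>\sigma q/p$, keeping the $W^{\gamma,q}$-seminorm genuinely available after the Hölder split); getting all three exponent balances—kernel power, the power on $|\tau_hu|$, and the Sobolev exponent on $u_h$—to close simultaneously is the delicate part, and is the reason (as the authors note in the Introduction) the subquadratic estimate depends on more parameters than its superquadratic counterpart.
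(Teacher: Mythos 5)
Your broad strategy matches the paper's — discrete differencing of the weak formulation, a test function of the form (power of $\tau_h u$)$\times$(cutoff), Lemma \ref{lem3} to extract a coercive local term, Lemma \ref{lem8} for the nonlocal $p$-tail, and a H\"older split with exponents $\bigl(\tfrac{2}{p},\tfrac{2}{2-p}\bigr)$ to pass from the degenerate energy to the clean seminorm — but the concrete test function you propose does not close the bookkeeping. The paper tests with $\varphi=J_{q-p+2}(\tau_h u)\,\eta^p$, applying Lemma \ref{lem3} with $\delta=q-p+1\ge 1$. Your choice $\delta=\tfrac{q}{p}-1$ (inconsistently also written as $J_{\frac{q}{p}-\frac{p}{2}+1}$) fails on three counts: (i) $\tfrac{q}{p}-1$ need not be $\ge 1$, so the hypothesis of Lemma \ref{lem3} can be violated; (ii) $|J_{\delta+1}(\tau_h u)|=|\tau_h u|^{q/p-1}$ does not equal $|\tau_h u|^{q-p+1}$, so the pairing with Lemma \ref{lem8} cannot produce the $\int_{B_R}|\tau_hu|^{q-p+1}$ that appears on the right-hand side of the Proposition; and (iii) Lemma \ref{lem3} with your $\delta$ would yield the weight $(|\tau_hu(x)|+|\tau_hu(y)|)^{q/p-2}$ rather than $(|\tau_hu(x)|+|\tau_hu(y)|)^{q-p}$, and only the latter matches the lower bound of Lemma \ref{lem2} for $|J_{q/p+1}(\tau_hu(x))-J_{q/p+1}(\tau_hu(y))|^p$, so the H\"older split would not reproduce the stated left-hand side.

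Two further points would need repair even with the correct $\delta$. First, your assertion that ``$e=f=1$ so the $|e-f|^p$ term in Lemma \ref{lem3} drops'' is not available: the cutoff must appear inside the test function as $\eta^p$, so $e=\eta(x)$, $f=\eta(y)$, and the $|e-f|^p$ term produces the commutator integral
\begin{align*}
P_{1,2}=\int_{B_R}\int_{B_R}\frac{(|\tau_hu(x)|+|\tau_hu(y)|)^q|\eta(x)-\eta(y)|^p}{|x-y|^{N+s_pp}}\,dxdy\lesssim\frac{1}{(R-r)^p}\int_{B_R}|\tau_hu|^q\,dx,
\end{align*}
which is one of the three terms on the right-hand side and cannot simply be discarded. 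Second, the degenerate weight in the coercive term is $(|u_h(x)-u_h(y)|+|u(x)-u(y)|)^{p-2}$, not $(|\tau_hu(x)-\tau_hu(y)|)^{p-2}$ as you wrote. This matters because the H\"older-conjugate factor that survives is $(|u_h(x)-u_h(y)|+|u(x)-u(y)|)^p(|\tau_hu(x)|+|\tau_hu(y)|)^{q-p}/|x-y|^{N+\sigma}$; it is a further H\"older step with exponents $\bigl(\tfrac{q}{p},\tfrac{q}{q-p}\bigr)$ applied to \emph{this} quantity — exploiting the first differences of $u$ and $u_h$ directly, not second differences — that yields the factor $[u]_{W^{\gamma,q}(B_r)}^p\bigl(\int_{B_r}|\tau_hu|^q\bigr)^{(q-p)/q}$ and forces the constraint $\sigma<\gamma p$ for the second integral to converge.
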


\begin{proof}
	For a ball $B_{R+d}\equiv B_{R+d}(x_0)\subset\subset\Omega$, $0<r<R\le1$ and $d\in\left(0,\frac{1}{4}(R-r) \right] $, by choosing $\varphi_{-h}(x):=\varphi(x-h)$ as a test function in \eqref{1.2} that satisfies $\varphi\in W^{s_1, 1}_{\mathrm{loc}}(B_R)\cap W^{s_p, p}_{\mathrm{loc}}(B_R)$ and $\mathrm{supp}\,\varphi\in B_{\frac{1}{2}(R+r)}$, we have
	\begin{align}
		\label{9.1}
		0&=\int_{\mathbb{R}^N}\int_{\mathbb{R}^N}Z_h\frac{\varphi(x)-\varphi(y)}{|x-y|^{N+s_1}}\,dxdy\nonumber\\
		&\quad+\int_{\mathbb{R}^N}\int_{\mathbb{R}^N}\frac{|u_h(x)-u_h(y)|^{p-2}(u_h(x)-u_h(y))(\varphi(x)-\varphi(y))}{|x-y|^{N+s_pp}}\,dxdy.
	\end{align}
	After subtracting \eqref{1.2} from \eqref{9.1}, one can obtain
	\begin{align}
		\label{9.2}
		0&=\int_{\mathbb{R}^N}\int_{\mathbb{R}^N}(Z_h-Z)\frac{\varphi(x)-\varphi(y)}{|x-y|^{N+s_1}}\,dxdy\nonumber\\
		&\quad+\int_{\mathbb{R}^N}\int_{\mathbb{R}^N}\frac{\left[ J_p(u_h(x)-u_h(y))-J_p(u(x)-u(y))\right] (\varphi(x)-\varphi(y))}{|x-y|^{N+s_pp}}\,dxdy.
	\end{align}
	We now set a function $\eta$ satisfying
	\begin{align}
		\label{eta}
		0\le\eta\in C^1_0\left( B_{\frac{1}{2}(R+r)}\right),\ |\nabla\eta|\le\frac{C}{R-r}\quad\text{and}\quad\eta\equiv1\ \text{in}\ B_r. 
	\end{align}
	Since $u$ is locally bounded in $\Omega$, one can verify that $J_{q-p+2}(\tau_hu)\eta^p\in W^{s_1,1}_{0}(B_R)\cap W^{s_p,p}_0(B_R)$. Thus, letting $\varphi:=J_{q-p+2}(\tau_hu)\eta^p$, \eqref{9.2} implies
	\begin{align}
		\label{9.3}
		0&=\int_{\mathbb{R}^N}\int_{\mathbb{R}^N}(Z_h-Z)\frac{J_{q-p+2}(\tau_hu)\eta^p(x)-J_{q-p+2}(\tau_hu)\eta^p(y)}{|x-y|^{N+s_1}}\,dxdy\nonumber\\
		&\quad+\int_{\mathbb{R}^N}\int_{\mathbb{R}^N}\frac{J_p(u_h(x)-u_h(y))-J_p(u(x)-u(y))}{|x-y|^{N+s_pp}}\nonumber\\
		&\qquad\qquad\qquad\times\left[ J_{q-p+2}(\tau_hu)\eta^p(x)-J_{q-p+2}(\tau_hu)\eta^p(y)\right]\,dxdy\nonumber\\
		&=:I+P.
	\end{align}
	
We will deal with the $1$-growth term $I$ and $p$-growth term $P$ one by one.
\smallskip	

\noindent	\textbf{Estimate of $I$.} From \cite[Proof of Proposition 4.2]{LZ25}, we have
	\begin{align}
		\label{9.4}
		I\ge I_{1,2}+2I_2
	\end{align}
	with
	\begin{align*}
		I_{1,2}:=\int_{B_R}\int_{B_R}(Z_h-Z)\frac{\eta^p(x)-\eta^p(y)}{|x-y|^{N+s_1}}\frac{J_{q-p+2}(\tau_hu)(x)+J_{q-p+2}(\tau_hu)(y)}{2}\,dxdy
	\end{align*}
	and
	\begin{align*}
		I_2:=\int_{B_{\frac{1}{2}(R+r)}}\int_{\mathbb{R}^N\backslash B_R}(Z_h-Z)\frac{J_{q-p+2}(\tau_hu)\eta^p(x)}{|x-y|^{N+s_1}}\,dxdy.
	\end{align*}
	Moreover, one can estimate $I_{1,2}$ and $I_2$ from above, that is,
	\begin{align*}
		|I_{1,2}|\le \frac{C(N,p)}{(1-s_1)(R-r)}\int_{B_R}|\tau_hu|^{q-p+1}\,dx
	\end{align*}
	and
	\begin{align*}
		|I_2|\le \frac{C(N)}{s_1(R-r)^{s_1}}\int_{B_R}|\tau_hu|^{q-p+1}\,dx.
	\end{align*}

\smallskip

	\noindent \textbf{Estimate of $P$.}	Note that
	\begin{align}
		\label{9.5}
		P=P_1+2P_2
	\end{align}
	with
	\begin{align*}
		&P_1:=\int_{B_R}\int_{B_R}\frac{J_p(u_h(x)-u_h(y))-J_p(u(x)-u(y))}{|x-y|^{N+s_pp}}\\
		&\qquad\qquad\qquad\times\left[ J_{q-p+2}(\tau_hu)\eta^p(x)-J_{q-p+2}(\tau_hu)\eta^p(y)\right]\,dxdy
	\end{align*}
	and
	\begin{align*}
		P_2:=\int_{B_{\frac{1}{2}(R+r)}}\int_{\mathbb{R}^N\backslash B_R}\frac{J_p(u_h(x)-u_h(y))-J_p(u(x)-u(y))}{|x-y|^{N+s_pp}}J_{q-p+2}(\tau_hu)\eta^p(x)\,dxdy.
	\end{align*}
	Applying Lemma \ref{lem3} to the local term $P_1$, we get
	\begin{align}
		\label{9.6}
		P_1\ge\frac{p-1}{2^{q-p+2}}P_{1,1}-\left( \frac{2^{q-p+2}}{p-1}\right)^{p-1}P_{1,2},
	\end{align}
	where we define
	\begin{align*}
		P_{1,1}&:=\int_{B_R}\int_{B_R}\frac{\left( |u_h(x)-u_h(y)|+|u(x)-u(y)|\right)^{p-2}\left( |\tau_hu(x)|+|\tau_hu(y)|\right)^{q-p} }{|x-y|^{N+s_pp}}\nonumber\\
		&\qquad\qquad\qquad\times|\tau_hu(x)-\tau_hu(y)|^2\left( \eta^p(x)+\eta^p(y)\right)\,dxdy
	\end{align*}
	and
	\begin{align*}
		P_{1,2}:=\int_{B_R}\int_{B_R}\frac{\left( |\tau_hu(x)|+|\tau_hu(y)|\right)^q|\eta(x)-\eta(y)|^p }{|x-y|^{N+s_pp}}\,dxdy.
	\end{align*}
	One can estimate $P_{1,2}$ from \eqref{eta} as follows:
	\begin{align}
		\label{9.7}
		|P_{1,2}|&\le \frac{C}{(R-r)^p}\int_{B_R}\int_{B_R}\frac{\left( |\tau_hu(x)|+|\tau_hu(x)|\right)^q }{|x-y|^{N+(s_p-1)p}}\,dxdy\nonumber\\
		&\le \frac{C}{(R-r)^p}\int^{2R}_0\frac{d\rho}{\rho^{(s_p-1)p+1}}\int_{B_R}|\tau_hu|^q\,dx\nonumber\\
		&\le \frac{C(N,p)}{(1-s_p)(R-r)^p}\int_{B_R}|\tau_hu|^q\,dx.
	\end{align}
	By Lemma \ref{lem2} and H\"{o}lder's inequality with exponents $\left( \frac{2}{p},\frac{2}{2-p}\right) $, we deduce
	\begin{align}
		\label{9.8}
		&\quad\int_{B_r}\int_{B_r}\frac{\left| J_{\frac{q}{p}+1}\left( \tau_hu(x)\right)-J_{\frac{q}{p}+1}\left( \tau_hu(y)\right) \right|^p }{|x-y|^{N+\frac{s_pp^2}{2}+\sigma\left( 1-\frac{p}{2}\right) }}\,dxdy\nonumber\\
		&\le C(p,q)\int_{B_r}\int_{B_r}\frac{\left( |\tau_hu(x)|+|\tau_hu(y)|\right)^{q-p}|\tau_hu(x)-\tau_hu(y)|^p }{|x-y|^{N+\frac{s_pp}{2}+\sigma\left( 1-\frac{p}{2}\right) }}\,dxdy\nonumber\\
		&\le C(p,q)P_{1,3}^{1-\frac{p}{2}}P_{1,1}^{\frac{p}{2}},
	\end{align}
	with
	\begin{align*}
		P_{1,3}:=\int_{B_r}\int_{B_r}\frac{\left( |u_h(x)-u_h(y)|+|u(x)-u(y)|\right)^p\left( |\tau_h(x)|+|\tau_h(y)|\right)^{q-p}}{|x-y|^{N+\sigma}}\,dxdy.
	\end{align*}
	For the purpose of deriving the norm $[u]_{W^{\gamma,q}}$, we use H\"{o}lder's inequality with exponents $\left( \frac{q}{p},\frac{q}{q-p}\right) $ to get
	\begin{align}
		\label{9.9}
		P_{1,3}&\le\left( \int_{B_r}\int_{B_r}\frac{\left( |u_h(x)-u_h(y)|+|u(x)-u(y)|\right)^q }{|x-y|^{N+\gamma q}}\,dxdy\right)^{\frac{p}{q}}\nonumber\\
		&\quad\times\left( \int_{B_r}\int_{B_r}\frac{\left( |\tau_hu(x)|+|\tau_hu(y)|^q\right) }{|x-y|^{N+(\sigma-\gamma p)\frac{q}{q-p}}}\,dxdy\right)^{\frac{q-p}{q}}\nonumber\\
		&\le C(p,q)[u]^p_{W^{\gamma,q}(B_r)}\left( \int_0^{2R}\frac{d\rho}{\rho^{(\sigma-\gamma p)\frac{q}{q-p}+1}}\int_{B_r}|\tau_hu(x)|^q\,dx\right)^{\frac{q-p}{q}}\nonumber\\
		&\le \frac{C(p,q)}{(\gamma p-\sigma)^{\frac{q-p}{q}}}[u]^p_{W^{\gamma,q}(B_r)}\left( \int_{B_r}|\tau_hu(x)|^q\,dx\right)^{\frac{q-p}{q}}.
	\end{align}
	
	Next, we pay our attention to the nonlocal term $P_2$. By Lemma \ref{lem8}, we obtain
	\begin{align}
		\label{9.10}
		|P_2|&\le \int_{B_{\frac{1}{2}(R+r)}}\left| \int_{\mathbb{R}^N\backslash B_{R}(x_0)}\frac{J_p(u_h(x)-u_h(y))-J_p(u(x)-u(y))}{|x-y|^{N+s_pp}}\,dy\right|\nonumber\\
		&\qquad\qquad\quad\times \left|J_{q-p+2}(\tau_hu(x))\right|\eta^p(x)\,dx\nonumber\\
		&\le\frac{C}{R^{s_pp}}\left( \frac{R}{R-r}\right) ^{N+s_pp+1}\int_{B_{\frac{1}{2}(R+r)}}\left( |\tau_hu|^{p-1}+\frac{|h|\mathcal{T}}{R}\right)|\tau_hu|^{q-p+1}\eta^p\,dx\nonumber\\ 
		&\le\frac{C}{(R-r)^{N+s_pp+1}}\left( \int_{B_{\frac{1}{2}(R+r)}}|\tau_hu|^q\,dx+|h|\mathcal{T}\int_{B_{\frac{1}{2}(R+r)}}|\tau_hu|^{q-p+1}\,dx\right). 
	\end{align}
	
	Finally, combining \eqref{9.3}--\eqref{9.10}, we get the desired inequality
	\begin{align*}
		&\quad\int_{B_r}\int_{B_r}\frac{\left| J_{\frac{q}{p}+1}\left( \tau_hu(x)\right)-J_{\frac{q}{p}+1}\left( \tau_hu(y)\right) \right|^p }{|x-y|^{N+\frac{s_pp^2}{2}+\sigma\left( 1-\frac{p}{2}\right) }}\,dxdy\\
		&\le C(p,q)P_{1,1}^{\frac{p}{2}}P_{1,3}^{1-\frac{p}{2}}\\
		&\le C(p,q)\left( I_{1,2}+I_2+P_2+P_{1,2}\right)^{\frac{p}{2}}P_{1,3}^{1-\frac{p}{2}}\\
		&\le \frac{C(p,q,N,s_1,s_p)}{(\gamma p-\sigma)^{\frac{q-p}{q}\left(1-\frac{p}{2}\right)}}\left([u]^{p}_{W^{\gamma,q}(B_r)}\int_{B_r}|\tau_hu|^q\,dx\right)^{\frac{q-p}{q}\left(1-\frac{p}{2}\right)}\\
		&\quad\times\left( \frac{\int_{B_R}|\tau_hu|^{q-p+1}\,dx}{R-r}+\frac{\int_{B_R}|\tau_hu|^{q-p+1}\,dx}{(R-r)^{s_1}}+\frac{\int_{B_{\frac{1}{2}(R+r)}}|\tau_hu|^{q}\,dx}{(R-r)^{N+s_pp+1}}\right.\\
		&\qquad\qquad\left.+\frac{\mathcal{T}^{p-1}|h|\int_{B_{\frac{1}{2}(R+r)}}|\tau_hu|^{q-p+1}\,dx}{(R-r)^{N+s_pp+1}}+\frac{\int_{B_R}|\tau_hu|^{q}\,dx}{(R-r)^p}\right)^{\frac{p}{2}}\\
		&\le \left( \frac{C}{(R-r)^{N+s_pp+1}}\right)^\frac{p}{2}\left[ [u]^p_{W^{\gamma,q}(B_r)}\left( \int_{B_r}|\tau_hu|^q\,dx\right)^\frac{q-p}{q} \right]^{1-\frac{p}{2}}\\
		&\quad\times\left(\int_{B_R}|\tau_hu|^{q-p+1}\,dx+\int_{B_R}|\tau_hu|^q\,dx+|h|\mathcal{T}^{p-1}\int_{B_R}|\tau_hu|^{q-p+1}\,dx\right) ^\frac{p}{2}.
	\end{align*}
This finishes the proof.
\end{proof}

\begin{remark}
	\label{rem10}
	In Proposition \ref{pro9}, the constant $C$ is given by $C:=\frac{\widetilde{C}(N,p,q,s_1,s_p)}{(\gamma p-\sigma)^\frac{q-p}{q}}$.
\end{remark}

\section{Sobolev regularity}
\label{sec4}

In this section, we will prove our main results by considering two distinct cases, i.e.,  
\begin{itemize}
	\item [(i)] For the case $s_p\in \left(0,  \frac{p-1}{p}\right]$, we show that the fractional differentiability index can be improved to any value less than $\frac{s_p p}{p-1}$. 
	
	\item [(ii)] For the case $s_p\in \left( \frac{p-1}{p}, 1\right) $, we prove that $Du$ is almost locally bounded.
\end{itemize}
Unlike the approach used in \cite{LZ25}, we devote only limited effort to raising the integrability from $p$ to any $q\ge p$. Since Proposition \ref{pro9} imposes no restriction on $\gamma$, it allows us to increase the integrability to any $q$. Although this procedure entails a considerable loss of differentiability, we can compensate for it by iteratively applying the improvements stated in Lemmas \ref{lem11} and \ref{lem140}.

\subsection{The case $s_p\in \left( 0,\frac{p-1}{p}\right]$}
We first demonstrate the improvement of the differentiability index through the following two lemmas.
\begin{lemma}
	\label{lem11}
	Let $p\in(1, 2)$, $s_p\in\left( 0, \frac{p-1}{p}\right]$, $q\ge p$ and $\gamma\in \left( 0, \frac{s_p p}{p-1}\right)$. Suppose that $u\in W^{\gamma,q}_\mathrm{loc}(\Omega)$ is a locally bounded weak solution to problem \eqref{1.1}. Then, we have
	\begin{align*}
		u\in W^{\alpha,q}_{\mathrm{loc}}(\Omega)
	\end{align*}
	for any $\alpha\in (\gamma,\beta)$, where $\beta:=\gamma\left( 1-\frac{p^2}{2q}+\frac{p}{2q}\right)+\frac{s_pp^2}{2q}$.
	Moreover, there exists a constant $C$ depending on $N,p,q,s_1,s_p,\alpha$ such that for any ball $B_R\equiv B_R(x_0)\subset\subset\Omega$ with $R\in(0,1)$ and any $r\in(0,R)$,
	\begin{align*}
		[u]^q_{W^{\alpha,q}(B_r)}\le \frac{C\left( \mathcal{T}+[u]_{W^{\gamma,q}(B_R)}+1\right)^q }{(R-r)^{N+2q+1}},
	\end{align*}
	where
	\begin{align*}
		\mathcal{T}=\|u\|_{L^\infty(B_{R}(x_0))}+\mathrm{Tail}(u;x_0,R).
	\end{align*}
\end{lemma}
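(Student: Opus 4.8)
The plan is to combine the energy estimate of Proposition~\ref{pro9} with the finite-difference iteration lemmas of Section~\ref{sec2}, structuring the argument as a single bootstrap step that raises the differentiability from $\gamma$ to (almost) $\beta$ while keeping the integrability fixed at $q$. Throughout, I would fix auxiliary radii $r<\rho<R$ with, say, $\rho=\tfrac12(R+r)$, and set $d\sim\tfrac14(R-r)$ so that all the geometric restrictions in Proposition~\ref{pro9} and Lemmas~\ref{lem6}, \ref{lem7} are met; I would then absorb the various powers of $(R-r)$ into the single exponent $\kappa=N+2q+1$ at the very end via a routine covering/interpolation-of-radii argument, so I will not track constants carefully in the intermediate steps.

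\textbf{Step 1: insert the a priori bound on $\tau_h u$ into Proposition~\ref{pro9}.} The left-hand side of Proposition~\ref{pro9} is a Gagliardo-type seminorm of $v_h:=J_{q/p+1}(\tau_h u)$ at order $\sigma_\ast:=\tfrac1p\big(\tfrac{s_pp^2}{2}+\sigma(1-\tfrac p2)\big)$ in $W^{\sigma_\ast,p}$; since $|J_{q/p+1}(\tau_h u)|=|\tau_h u|^{q/p}$, Lemma~\ref{lem2} identifies this, up to constants, with $[\,|\tau_h u|^{q/p}\,]^p_{W^{\sigma_\ast,p}(B_r)}$. On the right-hand side, every occurrence of $\int_{B_R}|\tau_h u|^{\,\bullet}$ with exponent $q-p+1\le q$ (using $p>1$) or $q$ can be estimated by Lemma~\ref{lem5} applied with the already-known regularity $u\in W^{\gamma,q}_{\rm loc}$: this yields $\int_{B_R}|\tau_h u|^q\le C|h|^{\gamma q}\big([u]^q_{W^{\gamma,q}(B_{R'})}+(R-r)^{-\text{(something)}}\|u\|^q_{L^q}\big)$ for $|h|$ small, and likewise (after Hölder in $B_R$, since $|B_R|$ is bounded) $\int_{B_R}|\tau_h u|^{q-p+1}\le C|h|^{\gamma(q-p+1)}(\cdots)$. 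Also $[u]_{W^{\gamma,q}(B_r)}\le[u]_{W^{\gamma,q}(B_R)}$, and $\mathcal T$ is controlled by $\mathcal T+[u]_{W^{\gamma,q}(B_R)}+1$ via Lemma~\ref{lem4}. Substituting these, the right-hand side becomes $C(R-r)^{-\kappa'}\big(\mathcal T+[u]_{W^{\gamma,q}(B_R)}+1\big)^q\,|h|^{\theta}$ for an explicit power $\theta$ of $|h|$; a bookkeeping check shows $\theta=\gamma p\,\frac{q-p}{q}(1-\tfrac p2)\cdot\text{(from the bracket)}+\gamma\cdot\text{(from the last factor, raised to }\tfrac p2)$, and after dividing through by the right normalization this matches exactly $|h|^{\,\sigma_\ast\cdot\frac{q}{p}\cdot p\,?}$—the point being that one wants the resulting inequality in the form $\int_{B_r}|\tau_h(\,|\tau_h u|^{q/p}\,)|^{\,?}$ is not literally what appears, so instead I would argue as follows.

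\textbf{Step 2: from a Gagliardo bound to a second-difference bound, then back up.} From Step~1 we have a bound of the form $[\,|\tau_h u|^{q/p}\,]^p_{W^{\sigma_\ast,p}(B_r)}\le C(R-r)^{-\kappa'}\mathcal A^q|h|^{\theta}$ where $\mathcal A:=\mathcal T+[u]_{W^{\gamma,q}(B_R)}+1$. By Lemma~\ref{lem5} (applied to the function $w=|\tau_h u|^{q/p}$ of order $\sigma_\ast$ and exponent $p$) this controls $\int_{B_{r-d}}|\tau_k(|\tau_h u|^{q/p})|^p\,dk$-averages; using the elementary inequality $|\,|a|^{q/p}-|b|^{q/p}\,|\ge c\,|a-b|^{q/p}$ when $q\ge p$ is \emph{false} in general, so instead I use $|\,|a|^{q/p}-|b|^{q/p}\,|^{p}\ge c\,|\,|a|^{?}\,|$; the clean route is: $|\tau_k\tau_h u|^{q}\le C|\,J_{q/p+1}(\tau_h u)(x+k)-J_{q/p+1}(\tau_h u)(x)\,|^{p}$ is again not an identity. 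Therefore the honest plan is to run the estimate \emph{directly at the level of $\tau_h u$}: Proposition~\ref{pro9} with the Hölder/Lemma~\ref{lem5} substitutions of Step~1 gives, after taking $k=h$ (diagonal second difference) and using $|\tau_h\tau_h u|^q\le C|\tau_h u|^{q-p}|\tau_h\tau_h u|^{2}\cdot(\text{absorb})$ together with Lemma~\ref{lem2}, a bound $\int_{B_r}|\tau_h(\tau_h u)|^{q}\,dx\le C(R-r)^{-\kappa'}\mathcal A^q|h|^{\beta q}$ with $\beta$ as in the statement. Then Lemma~\ref{lem6} (case $\gamma=\beta\in(0,1)$, legitimate since $\beta\le\tfrac{s_pp}{p-1}$ is \emph{not} always $<1$ here—one checks $\beta<1$ because $\gamma<\tfrac{s_pp}{p-1}$ and the affine map $\gamma\mapsto\beta$ has fixed point $\tfrac{s_pp}{p-1}$, so $\beta<\tfrac{s_pp}{p-1}$; if $\tfrac{s_pp}{p-1}\le1$ we are done, and if $>1$ one instead uses the $\gamma>1$ branch—but in this subsection $s_p\le\tfrac{p-1}{p}$ forces $\tfrac{s_pp}{p-1}\le1$, so $\beta<1$) upgrades this to $\int_{B_r}|\tau_h u|^q\le C(R-r)^{-\kappa''}\mathcal A^q|h|^{\beta q}$ for small $|h|$. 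Finally Lemma~\ref{lem7} converts the first-difference bound into $u\in W^{\alpha,q}_{\rm loc}$ for every $\alpha\in(\gamma,\beta)$ with $[u]^q_{W^{\alpha,q}(B_r)}\le C(R-r)^{-\kappa}\mathcal A^q$, which is the claim after renaming radii and collecting exponents into $\kappa=N+2q+1$.

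\textbf{Main obstacle.} The genuinely delicate point is the passage in Step~2 from the weighted Gagliardo seminorm of $J_{q/p+1}(\tau_h u)$ supplied by Proposition~\ref{pro9}—whose order $\sigma_\ast=\tfrac{s_pp}{2}+\tfrac{\sigma}{p}(1-\tfrac p2)$ depends on the free parameter $\sigma\in(0,\gamma p)$—to a \emph{clean} second-difference estimate for $\tau_h u$ itself with the sharp exponent $\beta q$ in $|h|$. One must optimize over $\sigma\uparrow\gamma p$ (which is why the constant blows up like $(\gamma p-\sigma)^{-(q-p)(2-p)/(2q)}$, cf.\ Remark~\ref{rem10}), control the subquadratic factor $(|\tau_h u(x)|+|\tau_h u(y)|)^{q-p}$ via Hölder against the already-known $W^{\gamma,q}$ norm without losing more than $\tfrac{p}{2}(1-\tfrac{p-1}{?})$ worth of differentiability, and then verify that the resulting exponent is exactly $\beta=\gamma(1-\tfrac{p^2}{2q}+\tfrac{p}{2q})+\tfrac{s_pp^2}{2q}$ so that iterating this lemma drives $\gamma_i\to\tfrac{s_pp}{p-1}$. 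Getting these exponents to close up correctly—rather than leaking a bit of regularity at each of the several Hölder steps—is where all the care goes; everything else is the standard difference-quotient machinery already packaged in Lemmas~\ref{lem5}--\ref{lem7}.
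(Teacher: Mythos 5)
Your overall blueprint matches the paper's: use Lemma~\ref{lem5} to insert the a priori $W^{\gamma,q}$ information into Proposition~\ref{pro9}, extract a diagonal second-difference estimate, and climb back up via Lemmas~\ref{lem6} and~\ref{lem7}. But Step~2 contains a genuine gap that the paper's argument does not have.

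The paper's pivotal algebraic fact is a direct consequence of Lemma~\ref{lem2} with $\gamma=q/p\geq1$: since $(|a|+|b|)^{q/p-1}\geq|b-a|^{q/p-1}$,
\begin{align*}
\bigl|J_{\frac{q}{p}+1}(b)-J_{\frac{q}{p}+1}(a)\bigr|\;\geq\;C_1\,|b-a|^{q/p},
\end{align*}
so that with $a=\tau_h u(x)$, $b=\tau_h u(x+h)$ one gets $C(p,q)\,|\tau_h(J_{\frac{q}{p}+1}(\tau_h u))|\geq|\tau_h(\tau_h u)|^{q/p}$, hence $\int_{B_r}|\tau_h(\tau_h u)|^q\leq C\int_{B_r}|\tau_h(J_{\frac{q}{p}+1}(\tau_h u))|^p$. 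In your Step~2 you actually write down this exact inequality (``$|\tau_k\tau_h u|^q\leq C|J_{q/p+1}(\tau_h u)(x+k)-J_{q/p+1}(\tau_h u)(x)|^p$'') and then reject it because it ``is again not an identity.'' That is a misreading of what is needed: you only require the one-sided lower bound, which Lemma~\ref{lem2} supplies precisely because $J_{q/p+1}$ is the \emph{signed} power $|a|^{q/p-1}a$ (your earlier objection, correctly noting that $\bigl||a|^{q/p}-|b|^{q/p}\bigr|\geq c|a-b|^{q/p}$ fails, applies to the unsigned power but not to $J_{q/p+1}$). Having abandoned the correct tool, you substitute the inequality $|\tau_h\tau_h u|^q\leq C|\tau_h u|^{q-p}|\tau_h\tau_h u|^2$, which is false in general (it amounts to $|\tau_h\tau_h u|^{q-2}\lesssim|\tau_h u|^{q-p}$ and can be violated, e.g.\ on a set where $\tau_h u$ is small but $\tau_h\tau_h u$ is not). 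A related slip: Lemma~\ref{lem5} should be applied in the second pass to $J_{q/p+1}(\tau_h u)$ (as in the paper's (11.2)), not to the unsigned $|\tau_h u|^{q/p}$, so that the diagonal choice $\lambda=h$ interacts cleanly with Lemma~\ref{lem2}.

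The rest of the scheme is sound: your identification of $\beta<1$ here (since $\gamma<\frac{s_pp}{p-1}\leq1$ and the affine update has fixed point $\frac{s_pp}{p-1}$) is correct, and tuning $\sigma\uparrow\gamma p$ so that $\tilde\alpha=\bigl[\gamma(q-\frac p2)+\sigma(1-\frac p2)+\frac{s_pp^2}{2}\bigr]/q$ falls strictly between $\alpha$ and $\beta$ is exactly what is needed for Lemma~\ref{lem7}. Fix the Step~2 passage by keeping the inequality you dismissed and the proof goes through as in the paper.
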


\begin{proof}
	Note that $u\in W^{\gamma,q}_{\mathrm{loc}}(\Omega)$, we first apply Lemma \ref{lem5} to obtain 
	\begin{align}\label{11.0}
		\int_{B_{\widetilde{R}}}\left|\tau_h u\right|^q d x \leq C|h|^{\gamma q}\left[(1-\gamma)[u]_{W^{\gamma, q}\left(B_R\right)}^q+\left(\frac{R^{(1-\gamma) q}}{d^q}+\frac{1}{\gamma d^{\gamma q}}\right)\|u\|_{L^{\infty}(B_R)}^q\right]
	\end{align}
	for any $h \in \overline{{B}_d} \setminus\{0\}$, where $d:=\frac{1}{7}(R-r)$ and $\widetilde{R}:=R-d$.\par
	Thus, setting $\tilde{r}=r+d$ and from Proposition \ref{pro9}, one can deduce the following estimate 
	\begin{align}
		\label{11.1}
		&\quad\int_{B_{\tilde{r}}} \int_{B_{\tilde{r}}} \frac{\left| J_{\frac{q}{p}+1}\left(\tau_h u(x)\right)-J_{\frac{q}{p}+1}\left(\tau_h u(y)\right)\right|^p}{|x-y|^{N+s_p p  \frac{p}{2}+\sigma\left(1-\frac{p}{2}\right)}} d x d y \nonumber \\
		&\leq \frac{C[u]_{W^{\gamma,q}\left(B_r\right)}^{p\left(1-\frac{p}{2}\right)}}{(R-r)^{(N+s_p p+1)\frac{p}{2}}}\left[\left([u]_{W^{\gamma,q}\left(B_R\right)}^q+\frac{\| u \|_{L^{\infty}\left(B_R\right)}^q}{(R-r)^q}\right)|h|^{\gamma q}\right]^{\frac{q-p}{q}\left(1-\frac{p}{2}\right)} \nonumber\\
		&\quad \times\left[\left(\int_{B_{\widetilde{R}}}\left|\tau_h u\right|^q d x\right)^{\frac{q-p+1}{q}}+\int_{B_{\widetilde{R}}}\left|\tau_h u\right|^q d x+|h| \mathcal{T}^{p-1}\left(\int_{B_{\widetilde{R}}}\left|\tau_h u\right|^q d x\right)^{\frac{q-p+1}{q}}\right]^{\frac{p}{2}}\nonumber\\
		&\leq \frac{C[u]^{p\left(1-\frac{p}{2}\right)}}{(R-r)^{(N+s_p p+1)\frac{p}{2}}}\left[\left([u]_{W^{\gamma, q}\left(B_R\right)}^q+\frac{\|u\|_{L^\infty\left(B_R\right)}^q}{(R-r)^q}\right)|h|^{\gamma q}\right]^{\frac{q-p}{q}\left(1-\frac{p}{2}\right)}\nonumber \\
		& \quad\times\Bigg[\left(\left([u]_{W^{\gamma, q}\left(B_R\right)}^q+\frac{\|u\|_{L^{\infty}\left(B_R\right)}^q}{(R-r)^q}\right)|h|^{\gamma q}\right)^{\frac{q-p+1}{q}}+\left([u]_{w^{\gamma ,q}\left(B_R\right)}^q+\frac{\|u\|_{L^\infty\left(B_R\right)}^q}{(R-r)^q}\right)|h|^{\gamma q}\nonumber\\
		&\quad\qquad+|h| \mathcal{T}^{p-1}\left(\left([u]_{W^{\gamma,q}\left(B_R\right)}+\frac{\|u\| _{L^\infty\left(B_R\right)}}{(R-r)^q}\right)|h|^{\gamma q}\right)^{\frac{q-p+1}{q}}\Bigg]^{\frac{p}{2}}\nonumber\\
		&\leq  \frac{C[u]_{W^{\gamma,q}\left(B_R\right)}^{p\left(1-\frac{p}{2}\right)}}{(R-r)^{N+2 q+1}}\left([ u]_{W^{\gamma, q}\left(B_R\right)}+\|u\|_{L^{\infty}\left(B_R\right)}\right)^{(q-p)\left(1-\frac{p}{2}\right)}|h|^{\gamma (q-p)\left(1-\frac{p}{2}\right)} \nonumber\\
		& \quad\times\Bigg[\left([u]_{W^{\gamma, q}\left(B_R\right)}+\|u\|_{L^{\infty}\left(B_R\right)}+1\right)^q|h|^{\gamma(q-p+1)}+\left([u]_{W^{\gamma, q}\left(B_R\right)}+\| u\|_{L^{\infty}\left(B_R\right)}\right)^q|h|^{\gamma q}\nonumber \\
		& \quad\qquad+\left(\mathcal{T}^q+\left([u]_{W^{\gamma, q}\left(B_R\right)}+\|u\|_{L^{\infty}\left(B_R\right)}\right)^q\right)|h|^{\gamma(q-p+1)+1}\Bigg]^{\frac{p}{2}}\nonumber\\
		&\leq \frac{C\left(\mathcal{T}+[u]_{w^{\gamma, q}\left(B_R\right)}+1\right)^q}{(R-r)^{N+2 q+1}}|h|^{\gamma \left(q-\frac{p}{2}\right)},
	\end{align}
	where Hölder inequality and Minkowski's inequality with exponents $\left( \frac{q}{q-p+1},\frac{q}{p-1}\right) $ are utilized  in the  calculation above.
	
	Now, let us deal with the term in the first line of \eqref{11.1}. By Lemma \ref{lem5}, we have, for any $\lambda \in \overline{{B}_d} \setminus\{0\}$,
	\begin{align}\label{11.2}
		& \int_{B_r}\left|\tau_\lambda\left(J_{\frac{q}{p}+1}\left(\tau_h u\right)\right)\right|^p d x \nonumber\\
		\leq & C|\lambda|^{s_p p \frac{p}{2}+\sigma\left(1-\frac{p}{2}\right)}\left(\left[J_{\frac{q}{p}+1}\left(\tau_h u\right)\right]^p_{W^{\frac{s_p p}{2}+\frac{\sigma}{p}\left(1-\frac{p}{2}\right), p}\left(B_{\tilde{r}}\right)}+\frac{\int_{B_{\tilde{r}}}\left|\tau_h u\right|^q dx}{s_p(R-r)^p}\right) .
	\end{align}
	Bringing \eqref{11.0} and \eqref{11.2} into \eqref{11.1}, we get 
	\begin{align}
		\label{11.3}
	&\quad	\int_{B_r}\left|\tau_\lambda\left(J_{\frac{q}{p}+1}\left(\tau_h u\right)\right)\right|^p d x \nonumber\\
		&\leq \frac{C\left(\mathcal{T}+[u]_{W^{\gamma,q }\left(B_R\right)}+1\right)^q}{(R-r)^{N+2 q+1}}\left(|h|^{\gamma\left(q-\frac{p}{2}\right)}+|h|^{\gamma q}\right)|\lambda|^{ \frac{s_pp^2}{2}+\sigma\left(1-\frac{p}{2}\right)}.
	\end{align}
	Choosing $\lambda=h$ and observing that 
	\begin{align*}
		C(p, q)\left|\tau_h\left(J_{\frac{q}{p}+1}\left(\tau_h u\right)\right)\right| \geq\left|\tau_h\left(\tau_h u\right)\right|^{\frac{q}{p}},
	\end{align*}
	one can estimate the left-hand side of \eqref{11.3} from below to derive 
	\begin{align}\label{11.30}
		\int_{B_r}\left|\tau_h\left(\tau_h u\right)\right|^q d x \leq \frac{C\left(\mathcal{T}+[u]_{W^{\gamma, q}\left(B_R\right)}+1\right)^q}{(R-r)^{N+2q+1} }|h|^{\gamma\left(q-\frac{p}{2}\right)+\sigma\left(1-\frac{p}{2}\right)+\frac{s_p p^2}{2}}
	\end{align}
	for any  $h \in \overline{{B}_d} \setminus\{0\}$. And a direct application of Lemma \ref{lem6} enables us to deduce
	\begin{align}\label{11.4}
		\int_{B_r}\left|\tau_h u\right|^q d x \leq C|h|^{\gamma\left(q-\frac{p}{2}\right)+\sigma\left(1-\frac{p}{2}\right)+\frac{s_p p^2}{2}}\left[\frac{\left(\mathcal{T}+[u]_{W^{\gamma, q}\left(B_R\right)}+1\right)^q}{(R-r)^{N+2 q+1} }+\frac{\|u\|_{L^\infty\left(B_R\right)}^q}{(R-r)^{3 p}}\right].
	\end{align}
	
	To ensure $u \in W^{\alpha, q}\left(B_r\right)$, for $\alpha\in(\gamma,\beta)$, we set 
	\begin{align*}
		\sigma=\frac{(\alpha+\beta)q-s_pp^2-\gamma(2q-p)}{2-p}.
	\end{align*}
	In this setting, we have 
	\begin{align*}
		\tilde{\alpha}:=\frac{\gamma\left(q-\frac{p}{2}\right)+\sigma\left(1-\frac{p}{2}\right)+\frac{s_p p^2}{2}}{q}>\alpha.
	\end{align*}
	After calculation, \eqref{11.4} implies that
	\begin{align*}
		\int_{B r}\left|\tau_h u\right|^q d x \leq \frac{C\left(\mathcal{T}+[u]_{W^{\gamma,q}\left(B_R\right)}+1\right)^q}{(1-\tilde{\alpha})^q(R-r)^{N+2 q+1} }|h|^{\tilde{\alpha} q}.
	\end{align*}
	As a result of applying Lemma \ref{lem7}, we arrive at 
	\begin{align}
		\label{11.5}
		\int_{B_r} \int_{B_r} \frac{|u(x)-u(y)|^q}{|x-y|^{N+\alpha q}} d x d y & \leq \frac{C}{(1-\tilde{\alpha})^q}\left[\frac{\left(\mathcal{T}+[u]_{w^{\gamma, q}\left(B_R\right)}+1\right)^q}{(\tilde{\alpha}-\alpha)(R-r)^{N+2 q+1} }+\frac{\|u\|_{L^{\infty}\left(B_R\right)}^q}{\alpha(R-r)^{\tilde{\alpha} q}}\right] \nonumber\\
		& \leq \frac{C\left(\mathcal{T}+[u]_{W^{\gamma, q}\left(B_R\right)}+1\right)^q}{(R-r)^{N+2q+1} },
	\end{align}
	where the constant $C$ takes the form of 
	\begin{align*}
		C=\frac{\widetilde{C}\left(N, p, q, s_1, s_p\right)}{(1-\alpha)^{q+1}  \alpha(\beta-\alpha)^{\frac{q-p}{p}}}.
	\end{align*}
	We remark that the term $(\beta-\alpha)^{\frac{q-p}{p}}$ arises in the application of Proposition \ref{pro9}.
	One can compute the quantity $\gamma p-\sigma$ to obtain
	\begin{align*}
		\gamma p-\sigma&=\gamma p-\frac{(\alpha+\beta)q-s_pp^2-\gamma(2q-p)}{2-p}\\
		&=\frac{\gamma p-\gamma p^2-(\alpha+\beta)q+s_pp^2+2\gamma q}{2-p}\\
		&\ge \frac{\gamma p-\gamma p^2-2\beta q+s_pp^2+2\gamma q+(\beta-\alpha)q}{2-p}\\
		&=\frac{(\beta-\alpha)q}{2-p}.
	\end{align*}
\end{proof}

We now iterate Lemma \ref{lem11} to achieve almost $W^{\frac{s_pp}{p-1},q}$-regularity of weak solutions.
\begin{lemma}\label{lem12}
	Let $p\in(1,2)$, $s_p\in \left(0,\frac{p-1}{p}\right]$, $q\geq p$ and $\gamma\in \left( 0, \frac{s_pp}{p-1}\right) $. Suppose that $u \in W_{\rm loc}^{\gamma, q}(\Omega)$ is a 
	locally bounded weak solution to problem \eqref{1.1}. Then, we have 	
	\begin{align*}
		u \in W_{\mathrm{loc}}^{\sigma, q}(\Omega)
	\end{align*}
	for any $\sigma \in\left(\gamma, \frac{s_p p}{p-1}\right)$. Moreover, there exist constants $\kappa$ and $C$ depending on $N,p,q,s_1,s_p,\gamma,\sigma$ such that for any ball $B_R\equiv B_R(x_0)\subset\subset\Omega$ with $R\in(0,1)$ and any $r\in(0,R)$,
	\begin{align*}
		[u]_{W^{\sigma, q}\left(B_r\right)}^q \leq \frac{C\left(\mathcal{T}+[u]_{W^{\gamma, q}\left(B_R\right)}+1\right)^q}{(R-r)^{\kappa}}.
	\end{align*}
\end{lemma}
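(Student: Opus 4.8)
The plan is to iterate Lemma \ref{lem11} finitely many times. Starting from the hypothesis $u\in W^{\gamma,q}_{\mathrm{loc}}(\Omega)$, set $\gamma_0=\gamma$ and define recursively $\gamma_{i+1}=\gamma_i\left(1-\frac{p^2}{2q}+\frac{p}{2q}\right)+\frac{s_pp^2}{2q}$, which is exactly the map $\gamma\mapsto\beta$ appearing in Lemma \ref{lem11}. Since $p\in(1,2)$ and $q\ge p$, the coefficient $\theta:=1-\frac{p^2}{2q}+\frac{p}{2q}=1-\frac{p(p-1)}{2q}$ lies in $(0,1)$, so the sequence $(\gamma_i)$ is a contraction iteration converging to its unique fixed point $\gamma_\infty=\frac{s_pp^2/(2q)}{1-\theta}=\frac{s_pp^2/(2q)}{p(p-1)/(2q)}=\frac{s_pp}{p-1}$. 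Moreover, because $\gamma_0=\gamma<\frac{s_pp}{p-1}$, the sequence is strictly increasing and stays below $\frac{s_pp}{p-1}$; hence for any target $\sigma\in\left(\gamma,\frac{s_pp}{p-1}\right)$ there is a smallest index $k=k(N,p,q,s_1,s_p,\gamma,\sigma)$ with $\gamma_k>\sigma$.

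First I would verify the elementary facts about the sequence $(\gamma_i)$ stated above (monotonicity, boundedness, the value of the limit), which pins down the number of iterations $k$ needed. Then I would apply Lemma \ref{lem11} successively on a chain of nested balls. Concretely, fix $B_R\equiv B_R(x_0)\subset\subset\Omega$ and $r\in(0,R)$, and introduce intermediate radii $R=\rho_0>\rho_1>\dots>\rho_k=r$ with, say, $\rho_i-\rho_{i+1}=\frac{R-r}{k}$. The first application of Lemma \ref{lem11} (with differentiability index $\gamma_0=\gamma$ on the pair $(\rho_1,\rho_0)$, choosing $\alpha\in(\gamma_0,\gamma_1)$ large enough) gives $u\in W^{\alpha,q}_{\mathrm{loc}}$ with a quantitative bound of the form $[u]^q_{W^{\alpha,q}(B_{\rho_1})}\le C(R-r)^{-(N+2q+1)}\big(\mathcal{T}+[u]_{W^{\gamma,q}(B_{\rho_0})}+1\big)^q$; feeding this into Lemma \ref{lem11} again on $(\rho_2,\rho_1)$ improves the index to something above $\gamma_2$, and so on. After $k$ steps one reaches differentiability index exceeding $\sigma$ on $B_r$, and then a final monotonicity remark (a higher fractional Sobolev seminorm on a ball controls a lower one, up to a constant depending on the radius) yields $u\in W^{\sigma,q}_{\mathrm{loc}}(\Omega)$ together with an estimate $[u]^q_{W^{\sigma,q}(B_r)}\le C(R-r)^{-\kappa}\big(\mathcal{T}+[u]_{W^{\gamma,q}(B_R)}+1\big)^q$, where $\kappa$ accumulates the powers $N+2q+1$ coming from each of the $k$ steps together with the factor $k^{N+2q+1}$ from splitting the annulus; both $C$ and $\kappa$ then depend only on $N,p,q,s_1,s_p,\gamma,\sigma$ since $k$ does.

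There are two points requiring a little care. The first is bookkeeping the constant through the iteration: each application of Lemma \ref{lem11} multiplies the previous seminorm bound into the base $\big(\mathcal{T}+[u]_{W^{\gamma_i,q}}+1\big)^q$, so one has to observe that the composition of $k$ such affine-in-the-seminorm estimates again produces a bound of the same shape, $C\,(R-r)^{-\kappa}\big(\mathcal{T}+[u]_{W^{\gamma,q}(B_R)}+1\big)^q$, with the new $C$ and $\kappa$ depending on $k$ — this uses that $k$ is a fixed finite number determined by the data, not on the solution. One should also note that at each stage $\mathcal{T}$ is evaluated on the current (shrinking) ball, so by Lemma \ref{lem4} all these tail quantities are dominated by the single quantity $\mathcal{T}$ associated to $B_R$, up to a constant depending on $N$ and the ratio of radii, i.e. on $k$. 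The second point, which I expect to be the only genuine obstacle, is making sure the intermediate indices can always be chosen strictly between consecutive $\gamma_i$'s while staying in the admissible range $\left(0,\frac{s_pp}{p-1}\right)$ required by Lemma \ref{lem11}; this is guaranteed precisely because $\gamma_i<\gamma_{i+1}<\frac{s_pp}{p-1}$ for all $i$, so one may pick the intermediate index as, e.g., the midpoint $\tfrac12(\gamma_i+\gamma_{i+1})$, and after finitely many steps overshoot $\sigma$. All remaining manipulations are the routine fractional-Sobolev estimates already packaged in Lemmas \ref{lem5}, \ref{lem6}, \ref{lem7} and the embedding lemmas of Section \ref{sec2}.
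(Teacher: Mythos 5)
Your proposal is correct and follows essentially the same strategy as the paper: iterate Lemma~\ref{lem11} a fixed finite number of times on a chain of nested balls, track the accumulated constants and tail terms via Lemma~\ref{lem4}, and conclude once the differentiability index overshoots $\sigma$.

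The only noteworthy difference is cosmetic. You aim the iteration at the full fixed point $\frac{s_pp}{p-1}$ and always pick the midpoint of the admissible interval $(\gamma_i,\beta(\gamma_i))$, which makes the per-step gap $\beta(\gamma_i)-\alpha_{i+1}$ shrink to zero as $i\to\infty$; this is harmless here because the number of steps $i_0$ is finite and fixed by the data, so the constants in Lemma~\ref{lem11} remain bounded. The paper instead targets the intermediate value $\tilde\sigma=\tfrac12(\sigma+\frac{s_pp}{p-1})$ with the modified recursion $\gamma_{i+1}=\gamma_i\theta+\tilde\sigma(1-\theta)$, which has the small bonus that the gap $\beta(\gamma_{i-1})-\gamma_i=\frac{p}{2q}\bigl(s_pp-\tilde\sigma(p-1)\bigr)$ is literally constant in $i$, giving a single uniform bound $C_*$ for all the per-step constants $C_j$ rather than a $j$-dependent bound; since one stops at $i_0$ anyway, both bookkeeping schemes deliver the same final form $C(R-r)^{-\kappa}(\mathcal T+[u]_{W^{\gamma,q}(B_R)}+1)^q$. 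Your remark on passing from a higher to a lower fractional seminorm on the final ball is also the right one and matches the $2^q$ factor the paper uses.
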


\begin{proof}
	Setting $\tilde{\sigma}=\frac{1}{2}\left(\sigma+\frac{s_p p}{p-1}\right)$ and defining
	\begin{align*}
		\gamma_0=\gamma,\quad \gamma_{i+1}=\gamma_i\left(1-\frac{p^2}{2 q}+\frac{p}{2 q}\right)+\tilde{\sigma}  \frac{p^2-p}{2 q}
	\end{align*}
	for $i\in\mathbb{N}$. So we have $\gamma_i\rightarrow\tilde{\sigma}$ as $i\rightarrow+\infty$. Additionally, we set
	\begin{align*}
		\rho_i:=r+\frac{1}{2^i}(R-r), \quad \beta_i:=\gamma_i\left(1-\frac{p^2}{2 q}+\frac{p}{2 q}\right)+\frac{s_p p^2}{2 q}
	\end{align*}
	and 
	\begin{align*}
		\mathcal{T}_i:=\|u\|_{L^{\infty}\left(B_{\rho_i}\right)}+\mathrm{Tail} \left(u ; \rho_i\right) .
	\end{align*}
	As a result of applying Lemma \ref{lem4},
	\begin{align}\label{12.1}
		\mathrm { Tail }\left(u ; \rho_{i-1}\right)^{p-1} \leq C(N)\left(\frac{R}{\rho_{i-1}}\right)^N \mathcal{T}^{p-1} \leq C(N)  2^{i N} \mathcal{T}^{p-1}.
	\end{align}

	Next, we will apply Lemma \ref{lem11} with parameters $(\alpha, \beta, \sigma, r, R)$ replaced by $\left(\gamma_i, \beta_i, \gamma_{i-1}, \rho_i, \rho_{i-1}\right)$ to deduce
	\begin{align*}
		[u]_{W^{\gamma_i ,q}\left(B_{p_i}\right)}^q \leq \frac{C\left(\mathcal{T}_{i-1}+[u]_{W^{\sigma_{i-1}, q}\left(B_{\rho_{i-1}}\right)}+1\right)^q}{\left(\rho_{i-1}-\rho_i\right)^{N+2 q+1}} .
	\end{align*}
	The above inequality holds when $[u]_{W^{\gamma_ {i+1}, q}{\left(B_{\rho_{i-1}}\right)}}<+\infty$.
	By definition of $\{\rho_i\}$ and \eqref{12.1}, we have 
	\begin{align}\label{12.2}
		[u]_{W^{\gamma_i,q }\left(B_{\rho_i}\right)} ^q &\leq \frac{2^{i(N+2q+1)}C\left(C(N, p) 2^{\frac{i N}{p-1}} \mathcal{T}+[u]_{W^{\gamma_{i-1}, q}\left(B_{\rho_{i-1}}\right)}+1\right)^q}{(R-r)^{N+2q+1}} \nonumber\\
		&\leq \frac{C_i 2^{i(N+2q+1)+i\frac{Nq}{p-1}}\left(\mathcal{T}+[u] _{W^{\gamma_{i-1},q}\left(B_{\rho_{i-1}}\right)}+1\right)^q}{(R-r)^{N+2q+1}} .
	\end{align}
	One can iterate \eqref{12.2} to obtain 
	\begin{align}\label{12.3}
		[u]^q_{w^{\gamma_i, q}\left(B_{\rho_i}\right)} \leq \frac{3^{i}\left(\prod\limits_{j=1}^i C_j\right)  2^{\left(N+2q+1+\frac{Nq}{p-1}\right)\left(\sum\limits_{j=1}^i j\right)}\left(\mathcal{T}+[u]_{W^{\gamma_0, q}\left(B_R\right)}+1\right)^q}{(R-r)^{(N+2q+1)i}}
	\end{align}
	for any $i\in \mathbb{N}$. Suppose $i_0$ be the smallest integer such that $\gamma_{i_0} \geq \sigma$, that is,
	\begin{align*}
		i_0:=\left[\frac{\ln \frac{\tilde{\sigma}-\sigma}{\sigma-\gamma}}{\ln \left(1-\frac{p^2}{2 q}+\frac{p}{2 q}\right)}\right]+2 \leq \frac{\ln \frac{\frac{s_p p}{p-1}-\sigma}{2(\sigma-\gamma)}}{\ln \left(1-\frac{p^2}{2 q}+\frac{p}{2 q}\right)}+2 .
	\end{align*}
	For the seminorm $[u]_{W^{\sigma, q}\left(B_r\right)}$, \eqref{12.3} implies 
	\begin{align*}
		[u]_{W^{\sigma, q}\left(B_r\right)}^q &\leq 2^q[u]_{W^{\sigma_{i_0}, q}\left(B_{\rho_{i_0}}\right)}^q \\
		& \leq \frac{\left(\prod\limits_{i=1}^{i_0} C_j\right) 3^{i_0} 2^{\left(N+2q+1+\frac{Nq}{p-1}\right)\left(\sum\limits_{j=1}^{i_0} j\right)}\left(\mathcal{T}+[u]_{W^{\gamma, q}\left(B_R\right)}+1\right)^q}{(R-r)^{(N+2q+1) i_0}} .
	\end{align*}
	Since $C_j$ takes the form of 
	\begin{align*}
		C_j&=\frac{C\left(N, p, q, s_1, s_p\right)}{(1-\gamma_i)^{q+1}\gamma_i(\beta_i-\gamma_i)^{\frac{q-p}{q}}}\\
		& \le \frac{C\left(N, p, q, s_1, s_p\right)}{\gamma \left(1-\sigma\right)^{q+1}[s_pp^2-(p-1)\tilde{\sigma}]^{\frac{q-p}{q}}}=:C_*,
	\end{align*}
	and $C_*$ is independent of $j$, we have 
	\begin{align*}
		[u]_{W^{\gamma, q}\left(B_r\right)}^q \leq \frac{C_*^{i_0}  3^{i_0} 2^{\left(N+2q+1+\frac{Nq}{p-1}\right)\left(\sum\limits_{j=1}^{i_0} j\right)}\left(\mathcal{T}+[u]_{W^{\gamma,q}\left(B_R\right)}+1\right)^q}{(R-r)^{(N+2q+1)i_0}}.
	\end{align*}
	By setting $C:=C_*^{i_0}  3^{i_0}  2^{\left(N+2q+1+\frac{Nq}{p-1}\right)\left(\sum\limits_{j=1}^{i_0} j\right)}$ and $\kappa:=(N+2q+1) i_0$, we conclude the desired claim.
\end{proof}

In the remainder of this subsection, we raise the integrability order and apply Lemma \ref{lemMorrey1} to establish the H\"{o}lder regularity of weak solutions.

\begin{proof}[Proof of Theorem \ref{th12}]
	Since $u$ is locally bounded in $\Omega$, we can prove $u\in W^{\frac{s_pp}{q},q}_{\mathrm{loc}}(\Omega)$. More precisely, we have 
	\begin{align}\label{13.1}
		[u]_{W^{\frac{s_p p}{q}, q}\left(B_R\right)}^q  & = \int_{B_R} \int_{B_R} \frac{|u(x)-u(y)|^{q-p}|u(x)-u(y)|^p}{|x-y|^{N+s_p p}} d x d y \nonumber\\
		& \leq 2^{q-p}\|u\|_{L^{\infty}\left(B_R\right)}^{q-p}[u]_{W^{s_p, p}\left(B_R\right)}^p \nonumber\\
		& \leq \overline{C}\left( [u]_{W^{s_p, p}\left(B_R\right)}+\|u\|_{L^{\infty}\left(B_R\right)}\right)^q .
	\end{align}
	By Lemma \ref{lem12}, we replace $\gamma$ by $\frac{s_pp}{q}$ to get 
	\begin{align*}
		[u]_{W^{\sigma, q}\left(B_r\right)}^q &\le \frac{\widetilde{C}\left(\mathcal{T}+[u]_{W^{\frac{s _pp}{q},q}\left(B_R\right)}+1\right)^q}{(R-r)^{\kappa} } \\
		& \le \frac{\widetilde{C}\left(\mathcal{T}+\overline{C}^{\frac{1}{q}}\left([u]_{W^{s_p,p}\left(B_R\right)}+\|u\|_{L^{\infty}\left(B_R\right)}\right)+1\right)^q}{(R-r)^{\kappa}} \\
		& \le \frac{4^q \widetilde{C} \overline{C}\left(\mathcal{T}+[u]_{W^{s_p,p}\left(B_R\right)}+1\right)^q}{(R-r)^{\kappa} } .
	\end{align*}
	One can set $C:=4^q \widetilde{C} \overline{C}$ to deduce desired inequality.
\end{proof}

\begin{proof}[Proof of Corollary \ref{cor13}]
	Let $\tilde{\gamma}=\frac{1}{2}\left( \gamma+\frac{s_pp}{p-1}\right) $ and $q=\frac{2N}{\frac{s_pp}{p-1}-\gamma}$. We apply the Morrey type embedding Lemma \ref{lemMorrey1} with $\gamma$ replaced by $\tilde{\gamma}$ and the claim of Theorem \ref{th12} to obtain
	\begin{align*}
		[u]_{C^{0,\gamma}(B_r)}=[u]_{C^{0,\tilde{\gamma}-\frac{N}{q}}(B_r)}&\le C[u]_{W^{\tilde{\gamma},q}(B_r)}\le \frac{C\left( \mathcal{T}+[u]_{W^{s_p,p}(B_R)}+1\right) }{(R-r)^{\kappa}}
	\end{align*}
	for some $C$ and $\kappa$ depending on $N, p, q, s_1, s_p, \gamma$.
\end{proof}

\subsection{The case $s_p\in \left( \frac{p-1}{p},1\right) $}

We divide our proof into two steps: (i) from $W^{s_p,p}_\mathrm{loc}(\Omega)$ to $W^{\frac{s_pp}{q},q}_\mathrm{loc}(\Omega)$; (ii) from  $W^{\frac{s_pp}{q},q}_\mathrm{loc}(\Omega)$ to $W^{1,q}_\mathrm{loc}(\Omega)$ for any $q\ge p$. As in the previous subsection, the first step is straightforward. However, in the second step, one must carefully handle the parameter that appears in the final stage of the iteration;  see \eqref{15.1} for details.

\begin{lemma}
	\label{lem140}
	Let $p\in (1,2)$, $s_p\in \left( \frac{p-1}{p}, 1\right) $ and $q\ge p$. Suppose that $u\in W^{\gamma,q}_{\mathrm{loc}}(\Omega)$ is a locally bounded weak solution to problem \eqref{1.1}. Then, we have
	\begin{align*}
		u\in W^{\alpha,q}_{\mathrm{loc}}(\Omega)
	\end{align*} 
	for any $\alpha\in (\gamma,\min\left\lbrace \beta,1\right\rbrace )$ with $\beta:=\gamma\left(1-\frac{p^2}{2q}+\frac{p}{2q}\right)+\frac{s_pp^2}{2q}$. Moreover, there exists a constant $C$ depending on $N,p,q,s_1,s_p,\alpha$ such that for any ball $B_R\equiv B_R(x_0)\subset\subset\Omega$ with $R\in(0,1)$ and any $r\in (0,R)$,
	\begin{align*}
		[u]^q_{W^{\alpha,q}(B_r)}\le \frac{C\left( \mathcal{T}+[u]_{W^{\gamma,q}(B_R)}+1\right)^q }{(R-r)^{N+s_pp+1}}.
	\end{align*}
\end{lemma}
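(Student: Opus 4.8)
The plan is to transcribe the proof of Lemma \ref{lem11} almost verbatim, the only structural change being forced by the fact that here $\beta$ may exceed $1$. First I would fix $d:=\tfrac{1}{7}(R-r)$, $\widetilde R:=R-d$, $\tilde r:=r+d$, and use Lemma \ref{lem5} to control $\int_{B_{\widetilde R}}|\tau_hu|^q\,dx$ by $C|h|^{\gamma q}$ times a combination of $[u]^q_{W^{\gamma,q}(B_R)}$ and $\|u\|^q_{L^\infty(B_R)}$ divided by a power of $R-r$, uniformly in $h\in\overline{B_d}\setminus\{0\}$; this is the analogue of \eqref{11.0}. Substituting into Proposition \ref{pro9} with the pair $(\gamma,\sigma)$ for an admissible $\sigma\in(0,\gamma p)$, and then applying Lemma \ref{lem5} once more to $J_{\frac qp+1}(\tau_hu)$ together with the elementary inequality $C(p,q)\,|\tau_h(J_{\frac qp+1}(\tau_hu))|\ge|\tau_h(\tau_hu)|^{q/p}$, I would arrive --- exactly as in \eqref{11.1}--\eqref{11.30} --- at the second-difference bound
\[
\int_{B_r}|\tau_h(\tau_hu)|^q\,dx\le\frac{C\big(\mathcal T+[u]_{W^{\gamma,q}(B_R)}+1\big)^q}{(R-r)^{\theta}}\;|h|^{\,\gamma(q-\frac p2)+\sigma(1-\frac p2)+\frac{s_pp^2}{2}},\qquad h\in\overline{B_d}\setminus\{0\},
\]
for a positive exponent $\theta$, after which a routine bookkeeping of the accumulated $R-r$-powers identifies $\theta$ with $N+s_pp+1$.

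The only genuinely new point is the calibration of $\sigma$ and the passage from the second to the first difference. Write $\tilde\alpha:=\tfrac1q\big(\gamma(q-\tfrac p2)+\sigma(1-\tfrac p2)+\tfrac{s_pp^2}{2}\big)$, an affine increasing function of $\sigma$ with $\tilde\alpha=\beta$ precisely at $\sigma=\gamma p$. When $s_p>\tfrac{p-1}{p}$ the endpoint $\beta$ may lie above $1$, whereas Lemma \ref{lem7} --- the tool that converts a first-difference bound $\int_{B_r}|\tau_hu|^q\,dx\le C|h|^{\tilde\alpha q}$ into membership in $W^{\alpha,q}$ --- only accepts $\tilde\alpha\le1$; this is exactly why the target index in the statement is capped at $\min\{\beta,1\}$. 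Given $\alpha\in(\gamma,\min\{\beta,1\})$, I would therefore select $\sigma\in(0,\gamma p)$ so that $\alpha<\tilde\alpha<\min\{\beta,1\}$. With such a $\tilde\alpha\in(0,1)$, Lemma \ref{lem6} in the regime $\gamma\in(0,1)$ turns the displayed second-difference bound into $\int_{B_r}|\tau_hu|^q\,dx\le C|h|^{\tilde\alpha q}$ with a right-hand side of the same form, and then Lemma \ref{lem7} (its $\beta$ replaced by $\alpha$, its $\gamma$ by $\tilde\alpha$) delivers $u\in W^{\alpha,q}_{\mathrm{loc}}(\Omega)$ together with the quantitative seminorm estimate; absorbing the factors $\tfrac1{\gamma p-\sigma}$, $\tfrac1{1-\tilde\alpha}$ and $\tfrac1{\tilde\alpha-\alpha}$ into $C=C(N,p,q,s_1,s_p,\alpha)$ leaves the exponent $N+s_pp+1$ in the final estimate.

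I expect the delicate step to be exactly this calibration: $\sigma$ must stay inside the window $(0,\gamma p)$ needed by Proposition \ref{pro9} and keep the constant $(\gamma p-\sigma)^{-(q-p)/q}$ of Remark \ref{rem10} finite, while $\tilde\alpha$ must be pinned into $(\alpha,\min\{\beta,1\})\subset(0,1)$ so that Lemmas \ref{lem6}--\ref{lem7} stay applicable (this is the role of the cap $\min\{\beta,1\}$). When the naive midpoint choice $\tilde\alpha=\tfrac{\alpha+\beta}{2}$ of Lemma \ref{lem11} would overshoot $1$, one instead takes $\tilde\alpha$ just above $\alpha$; and in the borderline situation where even this would push $\sigma$ out of its admissible window, one falls back either on the $\gamma\ge1$ cases of Lemma \ref{lem6} combined with Lemma \ref{lem16} (which already yield $u\in W^{1,q}_{\mathrm{loc}}(\Omega)$, hence $u\in W^{\alpha,q}_{\mathrm{loc}}(\Omega)$ by Lemma \ref{embed} for every $\alpha<1$) or, for the lower end of the range, on the trivial inclusion $W^{\alpha',q}_{\mathrm{loc}}(\Omega)\hookrightarrow W^{\alpha,q}_{\mathrm{loc}}(\Omega)$ for $\gamma<\alpha<\alpha'$ on the bounded balls involved. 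Apart from this, everything is a verbatim repetition of the computations in \eqref{11.0}--\eqref{11.5}.
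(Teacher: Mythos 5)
Your proposal is essentially the paper's own argument: repeat the \eqref{11.0}--\eqref{11.30} computation from Lemma~\ref{lem11} to get the second-difference bound with exponent $\gamma(q-\tfrac p2)+\sigma(1-\tfrac p2)+\tfrac{s_pp^2}{2}$ on $|h|$, then calibrate $\sigma$ so that the resulting $\tilde\alpha$ lands where Lemmas~\ref{lem6} and~\ref{lem7} can be applied, with the cap $\min\{\beta,1\}$ forced by Lemma~\ref{lem7}'s restriction to indices below $1$. The only point where you and the paper diverge is the choice of $\sigma$ when $\beta>1$: you aim for $\tilde\alpha\in(\alpha,1)$ strictly, so that the $\gamma\in(0,1)$ regime of Lemma~\ref{lem6} suffices, whereas the paper fixes $\sigma$ so that the second-difference exponent equals exactly $q$ (i.e.\ $\sigma=\tfrac{2q-s_pp^2-\gamma(2q-p)}{2-p}$; the printed formula's numerator ``$2$'' in place of ``$2q$'' is a typo), then invokes the borderline $\gamma=1$ case of Lemma~\ref{lem6} with $\lambda=\tfrac{1+\alpha}{2}$ and feeds that into Lemma~\ref{lem7}. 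The two calibrations require the same admissibility of $\sigma$ (namely $\gamma(q-\tfrac p2)+\tfrac{s_pp^2}{2}<q$, equivalently $\gamma<\tfrac{2q-s_pp^2}{2q-p}$) and lead to the same final estimate, so the difference is cosmetic; your variant has the small advantage of never leaving the $\gamma\in(0,1)$ case of Lemma~\ref{lem6}, while the paper's saves a line of case analysis by always hitting the exponent $q$ dead-on. Your instinct to flag a fallback (via the $\gamma>1$ case of Lemma~\ref{lem6} plus Lemma~\ref{lem16}, or via monotone inclusion of the fractional seminorms) when the above admissibility condition fails is sound and in fact covers a case the paper's proof glosses over; in the iteration of Lemma~\ref{lem14} this condition is implicitly maintained because the iterates stay below $\tfrac{1+\alpha}{2}$, so the paper never needs the fallback in practice. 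One bookkeeping note: the $R-r$ power the computation actually produces is $N+2q+1$, exactly as in \eqref{11.5} and as appears in the paper's own proof of this lemma; the exponent $N+s_pp+1$ in the lemma statement (which you matched) is evidently another typo, so ``identifying $\theta$ with $N+s_pp+1$'' cannot come out of the calculation and you should replace it by $N+2q+1$.
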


\begin{proof}
	We will prove the desired result by using the method developed in the proof of Lemma \ref{lem11}. For the case $\beta\le1$, since the power of $|h|$ that in the right-hand side of \eqref{11.30} is less than $q$, that is,
	\begin{align*}
		\gamma\left( q-\frac{p}{2}\right)+\sigma\left( 1-\frac{p}{2}\right)+\frac{s_pp^2}{2}<\gamma\left( q-\frac{p}{2}\right)+\gamma p\left( 1-\frac{p}{2}\right)+\frac{s_pp^2}{2}=\beta q\le q,
	\end{align*}
	one can directly obtain the inequality \eqref{11.5}.
	
	For the case $\beta>1$, we set
	\begin{align*}
		\sigma:=\frac{2-s_pp^2-\gamma(2q-p)}{2-p}\in (0,\gamma p)
	\end{align*}
	and estimate the term in the right-hand side of \eqref{11.30} from above to deduce
	\begin{align*}
		\int_{B_r}|\tau_h(\tau_hu)|^q\,dx\le \frac{C\left( \mathcal{T}+[u]_{W^{\gamma,q}(B_R)}+1\right)^q }{(R-r)^{N+2q+1}}|h|^q.
	\end{align*}
	By Lemma \eqref{lem6}, we have
	\begin{align*}
		\int_{B_r}|\tau_hu|^q\,dx&\le C(q)|h|^{\tilde{\alpha}q}\left[ \frac{C\left( \mathcal{T}+[u]_{W^{\gamma,q}(B_R)}+1\right)^q }{(1-\tilde{\alpha})^q(R-r)^{N+2q+1}}+\frac{R^N\|u\|^q_{L^\infty(B_R)}}{(R-r)^{\alpha q}}\right]\\
		&\le \frac{\widetilde{C}(N,p,q,s_1,s_p)\left( \mathcal{T}+[u]_{W^{\gamma,q}(B_R)}+1\right)^q}{(1-\tilde{\alpha})^q(R-r)^{N+2q+1}}|h|^{\tilde{\alpha}q},
	\end{align*}
	where $\tilde{\alpha}:=\frac{1+\alpha}{2}$.
	
	Finally, we utilize Lemma \ref{lem7} to obtain
	\begin{align*}
		[u]^q_{W^{\alpha,q}(B_r)}&\le C(N,q)\left[ \frac{C(N,p,q,s_1,s_p)\left( \mathcal{T}+[u]_{W^{\gamma,q}(B_R)}+1\right)^q}{(\tilde{\alpha}-\alpha)(1-\tilde{\alpha})^q(R-r)^{N+2q+1}}+\frac{R^N\|u\|^q_{L^\infty(B_R)}}{\alpha(R-r)^{\alpha q}}\right] \\
		&\le \frac{C\left( \mathcal{T}+[u]_{W^{\gamma,q}(B_R)}+1\right)^q}{(R-r)^{N+2q+1}},
	\end{align*}
	where the constant $C$ is given by
	\begin{align*}
		C:=\frac{\widetilde{C}(N,p,q,s_1,s_p)}{(1-\alpha)^{q+1}\alpha(\min\left\lbrace \beta,1\right\rbrace-\alpha )^{\frac{q-p}{q}}}.
	\end{align*}
\end{proof}

\begin{lemma}\label{lem14}
	Let $p\in (1,2)$, $s_p\in \left(\frac{p-1}{p},1\right)$, $\gamma\in (0,1)$ and $q\geq p$. Suppose that $u \in W_{\mathrm{loc}}^{\gamma, q}(\Omega)$ is
	a locally bounded weak solution to problem \eqref{1.1}. Then, we have
	\begin{align*}
		u \in W_{\mathrm{loc}}^{\alpha, q}(\Omega)
	\end{align*}
	for any $\alpha\in (\gamma,1)$. Moreover, there exist constants $C$ and $\kappa$ depending on $N, p, q, s_1, s_p, \alpha, \gamma$ such that 
	for any ball $B_R \equiv B_R\left(x_0\right)$ with $R\in(0,1)$ and any $r\in (0,R)$,
	\begin{align*}
		[u]_{W^{\alpha, q}\left(B_r\right)}^q \leq \frac{C\left(\mathcal{T}+[u]_{W^{\gamma, q}\left(B_R\right)}+1\right)^q}{(R-r)^{\kappa}}.
	\end{align*}
\end{lemma}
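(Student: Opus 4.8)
The plan is to run the Moser-type iteration of Lemma \ref{lem12}, except that here the iteration must be stopped strictly below the differentiability level $1$ rather than allowed to approach $\frac{s_pp}{p-1}$. Fix $\alpha\in(\gamma,1)$ and set $\tilde{\sigma}:=\frac{1+\alpha}{2}\in(\alpha,1)$. Define the scalar sequence
\[
\gamma_0=\gamma,\qquad \gamma_{i+1}=\gamma_i\Bigl(1-\frac{p^2}{2q}+\frac{p}{2q}\Bigr)+\tilde{\sigma}\,\frac{p^2-p}{2q},
\]
the nested radii $\rho_i:=r+2^{-i}(R-r)$, and the localized quantities $\mathcal{T}_i:=\|u\|_{L^\infty(B_{\rho_i})}+\mathrm{Tail}(u;x_0,\rho_i)$. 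Writing $c:=1-\frac{p^2}{2q}+\frac{p}{2q}\in(0,1)$, the affine map $x\mapsto cx+(1-c)\tilde{\sigma}$ has fixed point $\tilde{\sigma}$, so $\{\gamma_i\}$ increases to $\tilde{\sigma}$; in particular $\gamma\le\gamma_i<\tilde{\sigma}<1$ for every $i$.

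The structural point that makes the scheme work is that each step is admissible for Lemma \ref{lem140}. Put $\beta_i:=c\gamma_i+\frac{s_pp^2}{2q}$. Since $s_p>\frac{p-1}{p}$ gives $\frac{s_pp}{p-1}>1>\tilde{\sigma}$, we have $(1-c)\tilde{\sigma}<(1-c)\frac{s_pp}{p-1}=\frac{s_pp^2}{2q}$, hence $\gamma_i<\gamma_{i+1}<\beta_i$; combined with $\gamma_{i+1}<\tilde{\sigma}<1$ this yields $\gamma_{i+1}\in\bigl(\gamma_i,\min\{\beta_i,1\}\bigr)$. Therefore Lemma \ref{lem140}, applied with $(\gamma,\alpha,\beta,r,R)$ replaced by $(\gamma_i,\gamma_{i+1},\beta_i,\rho_{i+1},\rho_i)$, gives
\[
[u]_{W^{\gamma_{i+1},q}(B_{\rho_{i+1}})}^q\le\frac{C_{i+1}\bigl(\mathcal{T}_i+[u]_{W^{\gamma_i,q}(B_{\rho_i})}+1\bigr)^q}{(\rho_i-\rho_{i+1})^{\kappa_0}},
\]
where $\kappa_0$ is the exponent furnished by Lemma \ref{lem140} and $C_{i+1}$ depends on $N,p,q,s_1,s_p$ and on $(1-\gamma_{i+1})^{-1}$, $\gamma_{i+1}^{-1}$, $(\min\{\beta_i,1\}-\gamma_{i+1})^{-(q-p)/q}$. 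Using Lemma \ref{lem4} to bound $\mathrm{Tail}(u;x_0,\rho_i)^{p-1}\le C(N)2^{iN}\mathcal{T}^{p-1}$, so $\mathcal{T}_i\le C(N,p)2^{iN/(p-1)}\mathcal{T}$, together with $\rho_i-\rho_{i+1}=2^{-(i+1)}(R-r)$, the above becomes a recursion
\[
[u]_{W^{\gamma_{i+1},q}(B_{\rho_{i+1}})}^q\le\frac{C_{i+1}\,2^{a(i+1)}}{(R-r)^{\kappa_0}}\bigl(\mathcal{T}+[u]_{W^{\gamma_i,q}(B_{\rho_i})}+1\bigr)^q
\]
for some fixed $a=a(N,p,q)$.

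Next I would iterate this recursion exactly $i_0$ times, where $i_0$ is the smallest index with $\gamma_{i_0}\ge\alpha$; since $\gamma_i\to\tilde{\sigma}>\alpha$, $i_0$ is finite and obeys an explicit bound of the type $i_0\le 2+\ln\bigl((\tilde{\sigma}-\alpha)/(\alpha-\gamma)\bigr)/\ln c$. Because $\gamma_i\in[\gamma,\tilde{\sigma}]$ is bounded away from $0$ and $1$, and $\min\{\beta_i,1\}-\gamma_{i+1}\ge\min\{\beta_i,1\}-\tilde{\sigma}$ is bounded below by a positive constant depending only on $p,q,s_p,\alpha$, all the $C_i$ are dominated by a single $C_*=C_*(N,p,q,s_1,s_p,\alpha)$. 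Telescoping then yields $[u]_{W^{\gamma_{i_0},q}(B_{\rho_{i_0}})}^q\le (R-r)^{-\kappa_0 i_0}C_*^{i_0}2^{b i_0^2}\bigl(\mathcal{T}+[u]_{W^{\gamma,q}(B_R)}+1\bigr)^q$. Finally, since $\alpha\le\gamma_{i_0}$, $B_r\subset B_{\rho_{i_0}}$, and $R\in(0,1)$ forces $|x-y|<2$ on $B_{\rho_{i_0}}$, the elementary comparison of $|x-y|^{-(N+\alpha q)}$ with $|x-y|^{-(N+\gamma_{i_0}q)}$ on the range $|x-y|<2$ gives the embedding $[u]_{W^{\alpha,q}(B_r)}^q\le C(N,q)[u]_{W^{\gamma_{i_0},q}(B_{\rho_{i_0}})}^q$, and hence the asserted estimate with $\kappa:=\kappa_0 i_0$ and $C$ the resulting product constant.

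The main obstacle is precisely what the paragraph preceding the statement warns about. The one-step gain \eqref{11.30} underlying Lemma \ref{lem140} drives the differentiability index toward the fixed point $\frac{s_pp}{p-1}$, which in the regime $s_p>\frac{p-1}{p}$ exceeds $1$; but the finite-difference lemmas (Lemma \ref{lem6}, Lemma \ref{lem7}) cannot convert a $|h|$-power larger than $q$ into anything beyond first-order differentiability, so the iteration cannot be pushed to its natural limit. One is therefore forced to cap it at a level $\tilde{\sigma}<1$, and the delicate part is to check at \emph{every} step — in particular the final one, where $\beta_i$ may already be larger than $1$ — that $\gamma_{i+1}$ stays strictly inside $(\gamma_i,\min\{\beta_i,1\})$, so that the factor $(\min\{\beta_i,1\}-\gamma_{i+1})^{-(q-p)/q}$ entering the constant of Lemma \ref{lem140} does not blow up. This constraint is exactly what makes the final constants $C$ and $\kappa$ depend on $\alpha$ and degenerate as $\alpha\uparrow1$.
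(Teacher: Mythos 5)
Your proof follows essentially the same route as the paper's: the same recursion $\gamma_{i+1}=c\gamma_i+(1-c)\tilde\sigma$ with $c=1-\tfrac{p^2}{2q}+\tfrac{p}{2q}$ and cap $\tilde\sigma=\tfrac{1+\alpha}2$, the same use of Lemma~\ref{lem140} together with Lemma~\ref{lem4} at each step, the same uniform bound on the per-step constants, and the same stopping index $i_0$. You are right that the crux is keeping $\gamma_{i+1}$ in the admissible window $(\gamma_i,\min\{\beta_i,1\})$ and that this is what makes $C,\kappa$ degenerate as $\alpha\uparrow1$; this is exactly the paper's reasoning.

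One step in your justification of the uniform bound on the $C_i$ is off, though the conclusion is still correct. You write $\min\{\beta_i,1\}-\gamma_{i+1}\ge\min\{\beta_i,1\}-\tilde\sigma$ and then claim that the right-hand side is bounded below by a positive constant. But for small $i$ (for instance $i=0$ with $\gamma$ close to $0$ and $q$ large so that $c$ is close to $1$), one has $\beta_0=c\gamma+(1-c)\tfrac{s_p p}{p-1}<\tilde\sigma$, so the quantity $\min\{\beta_i,1\}-\tilde\sigma$ can actually be negative; it is not the right lower bound. The correct uniform bound comes directly from the algebra: when $\beta_i\le1$, $\beta_i-\gamma_{i+1}=(1-c)\left(\tfrac{s_pp}{p-1}-\tilde\sigma\right)$ exactly, which is a positive constant independent of $i$; when $\beta_i>1$, $1-\gamma_{i+1}>1-\tilde\sigma=\tfrac{1-\alpha}2$. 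Taking the minimum of these two quantities yields a lower bound depending only on $p,q,s_p,\alpha$, which is all you need. Replace your intermediate inequality with this case distinction and the argument goes through.
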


\begin{proof}
	Similar to the proof of Lemma \ref{lem11}, we have 
	\begin{align}\label{14.1}
		[u]_{W^{\sigma, q}\left(B_{\tilde{r}}\right)}^q \leq \frac{C\left(\mathcal{T}+ [u]_{W^{\gamma, q}\left(B_{\widetilde{R}}\right)}+1\right)^q}{\left(\widetilde{R}-\tilde{r}\right)^{N+2q+1}}.
	\end{align}
	for any ball $B_{\widetilde{R}} \equiv B_{\widetilde{R}}\left(x_0\right)$ with $\widetilde{R}\in(0,1)$ and any $\tilde{r}\in (0,\widetilde{R})$, 
	where $\sigma\in \left(\gamma, \text{min}\{1, \beta\}\right)$ and $\beta:=\gamma\left(1-\frac{p^2}{2 q}+\frac{p}{2 q}\right)+\frac{s_p p^2}{2 q}$.\par
	Next, we define 
	\begin{align*}
		\gamma_0=\gamma, \quad \gamma_{i+1}=\gamma_i\left(1-\frac{p^2}{2 q}+\frac{p}{2 q}\right)+\frac{(p^2-p)(1+\alpha)}{4 q}
	\end{align*}
	and appply (\ref{14.1}) with parameters $\alpha$, $\beta$, $\sigma$, $r$, $R$ replaced by $\gamma_i$, $\gamma_i\left(1-\frac{p^2}{2 q}+\frac{p}{2 q}\right)+\frac{p^2-p}{2 q}$, $\gamma_{i+1}$, $\rho_{i+1}$, $\rho_i$ respectively to deduce
	\begin{align}\label{14.2}
		[u]_{W^{\gamma_{i+1},q}\left(B_{\rho_{i+1}}\right)}^q \leq \frac{C\left(\mathcal{T}_i+[u]_{W^{\gamma_i,q}\left(B_R\right)}+1\right)^q}{\left(\rho_i-\rho_{i+1}\right)^{N+2q+1}},
	\end{align}
	where $\{\rho_i\}$ and $\mathcal{T}_i$ are defined as in the proof of Lemma \ref{lem12}.\par
	Thus, from (\ref{12.1}) we get 
	\begin{align*}
		[u]_{W^{\gamma_i,q}\left(B_{\rho_ i}\right)}^q \leq \frac{C_i  2^{i\left(N+2q+1+\frac{Nq}{p-1}\right)}\left(\mathcal{T}+[u]_{W^{\gamma _{i-1}, q\left(B_{\rho_{i-1}}\right)}}+1\right)^q}{(R-r)^{N+2q+1}}.
	\end{align*}
	After iterating the above inequality, one can obtain the following inequality take the form similar to (\ref{12.3}), that is 
	\begin{align}\label{14.3}
		[u]_{W^{\gamma_i, q}\left(B_{\rho_i}\right)}^q \leq \frac{3^i\left(\prod\limits_{j=1}^i C_j\right) 2^{\left(N+2q+1+\frac{Nq}{p-1}\right)\left(\sum\limits_{j=1}^i j\right)}\left(\mathcal{T}+[u]_{W^{\gamma, q}\left(B_{R }\right)}+1\right)^q}{(R-r)^{(N+2 q+1) i}}
	\end{align}
	for any $i\in \mathbb{N}$.\par
	Now, we suppose $i_0$ be the smallest integer such that $\gamma_{i_0}\geq \rho_0$. In the case,
	\begin{align*}
		i_0:=\left[\frac{\ln \frac{1-\alpha}{2(\alpha-\gamma)}}{\ln \left(1-\frac{p^2}{2q}+\frac{p}{2q}\right)}\right]+2,
	\end{align*}
	and (\ref{14.3}) enables us to obtain
	\begin{align*}
		[u]_{W^{\alpha, q\left(B_r\right)}}^q \leq \frac{C\left(\mathcal{T}+[u]_{W^{\gamma,q}\left(B_R\right)}+1\right)^q}{(R-r)^{\kappa}},
	\end{align*}
	where $C:=\widetilde{C}_*^{i_0} 3^{i_0} 2^{\left(N+2q+1+\frac{Nq}{p-1}\right)\left(\sum\limits_{j=1}^{i_0} j\right)}$, $ \kappa:=(N+2q+1) i_0$ and $\widetilde{C}_*$ is a constant that satisfies for any $j$, 
	\begin{align*}
		C_j &\le \frac{\widetilde{C}\left(N, p, q, s_1, s_p\right)}{\gamma_j(1-\gamma_{j+1})^{q+1}(\min\left\lbrace \beta_i,1\right\rbrace-\gamma_i )^{\frac{q-p}{q}}}\le \frac{\widetilde{C}\left(N, p, q, s_1, s_p\right)}{\gamma(1-\alpha)^{q+2}}=:\widetilde{C}_*.
	\end{align*}
\end{proof}

With Lemma \ref{lem14} in hand, we can show that the weak solutions to problem \eqref{1.1} are almost Lipschitz continuous.

\begin{proof}[Proof of Theorem \ref{th26}]
	By (\ref{13.1}), we know that $u \in W_{\mathrm{loc}}^{\frac{s_p p}{q}, q}(\Omega)$, 
	which enables us to apply Lemma \ref{lem14} to obtain 
	\begin{align}\label{15.0}
		[u]_{W^{\alpha, q}\left(B_{\tilde{r}}\right) }^q & \leq \frac{\widetilde{C}\left(\mathcal{T}+[u]_{W^{\frac{s_p p}{q}, q}\left(B_R\right)}+1\right)^q}{(R-\tilde{r})^{\tilde{\kappa}} } \nonumber\\
		& \leq \frac{C\left(\mathcal{T}+[u]_{W^{s_ p, p}\left(B_R\right)}+1\right)^q}{(R-\tilde{r})^{\tilde{\kappa}}}
	\end{align}
	for any $\alpha\in(\frac{s_pp}{q},1)$ with $\tilde{r}=\frac{R+r}{2}$. We take $k:=\frac{1}{2}\left(1+\frac{1-s_p p}{2-p}\right)$, and apply (\ref{11.30}) under the assumption that $u \in W_{\mathrm{loc}}^{\alpha, q}(\Omega)$
	with $\sigma=k\alpha p$ to arrive at
	\begin{align}\label{15.1}
		\int_{B_r}\left|\tau_h\left(\tau_hu\right)\right|^q d x \leq \frac{C\left(\mathcal{T}+[u]_{W^{\alpha, q}\left(B_{\tilde{r}}\right)}+1\right)^q}{\left(\tilde{r}-r\right)^{N+2q+1}}|h|^{\alpha\left(q-\frac{p}{2}\right)+k\alpha p\left(1-\frac{p}{2}\right)+\frac{s_pp^2}{2}}.
	\end{align}
	Here we choose $\alpha=\frac{2q-s_p p^2}{4q-2p+4kp-2kp^2}+\frac{1}{2}$ to ensure 
	\begin{align*}
		\alpha\left(q-\frac{p}{2}\right)+k\alpha p\left(1-\frac{p}{2}\right)+\frac{s_p p^2}{2}>q .
	\end{align*}
	Therefore, Lemma \ref{lem6} and  (\ref{15.1}) enable us to conclude the following inequality
	\begin{align}\label{15.3}
		\int_{B_r}\left|\tau_h u\right|^q d x & \leq C|h|^q\left[\frac{C\left(\mathcal{T}+[u]_{W^{\alpha, q}(B_{\tilde{r}})}+1\right)^q}{\left(\tilde{r}-r\right)^{N+2q+1}}+\frac{\|u\|_{L^{\infty}\left(B_{\tilde{r}}\right)}^q}{\left(\frac{\tilde{r}-r}{2}\right)^q}\right]\nonumber\\
		& \leq \frac{C|h|^q\left(\mathcal{T}+[u]_{W^{\alpha, q}\left(B_r\right)}+1\right)^q}{(\tilde{r}-r)^{N+2q+1}}.
	\end{align}\par
	After a direct application of Lemma \ref{lem16}, one concludes that
	\begin{align}
		\int_{B_r}|\nabla u|^q d x \leq \frac{C\left(\mathcal{T}+[u]_{W^{\alpha, q}}\left(B_r\right)+1\right)^q}{(\tilde{r}-r)^{N+2q+1}}.
	\end{align}
	Bringing (\ref{15.0}) into (\ref{15.3}), we get 
	\begin{align*}
		\int_{B r}|\nabla u|^q d x&\leq \frac{C\left(\mathcal{T}+\frac{\left(\mathcal{T}+[u]_{W^{s_p,p}\left(B_R\right)}+1\right)}{\left(R-\tilde{r}\right)^{\frac{\tilde{\kappa}}{q}}}+1\right)^q}{\left(\tilde{r}-r\right)^{N+2q+1}}\\
		&\leq \frac{C\left(\mathcal{T}+[u]_{W^{s_p, p}\left(B_R\right)}+1\right)^q}{(R-r)^{\kappa}},
	\end{align*}
	where $\kappa:=\tilde{\kappa}+N+2 q+1$.
\end{proof}

\begin{proof}[Proof of Corollary \ref{cor27}]
	From Theorem \ref{th26}, we know that the locally bounded weak solution $u\in W^{1,q}_\mathrm{loc}(\Omega)$ for any $q\ge p$, provide that $p\in (1,2)$ and $s_p\in\left( \frac{p-1}{p},1\right) $. Thus, for any fixed $\gamma\in(0,1)$, we apply Lemma \ref{Morrey2} with the parameter $q:=\frac{N}{1-\gamma}$ to obtain the estimate
	\begin{align*}
		[u]_{C^{0,\gamma}(B_r)}= [u]_{C^{0,1-\frac{N}{q}}(B_r)}\le C\|\nabla u\|_{L^q(B_r)}\le \frac{C\left( \mathcal{T}+[u]_{W^{s_p,p}(B_R)}+1\right) }{(R-r)^{\kappa}}.
	\end{align*}
\end{proof}

Finally, we derive the Sobolev regularity of the gradient of weak solutions.
To this end, we first present a useful lemma that provides a new formulation of Proposition \ref{pro9}.
\begin{lemma}
	\label{lem20}
		Let $p\in(1,2)$, $s_1\in(0,1)$, $s_p\in \left( \frac{p-1}{p},1\right) $ and $q\ge p$. There exists a constant $C$ depending on $N,p,q,s_1,s_p,\sigma$ such that for any $u\in W^{1,q}_{\mathrm{loc}}(\Omega)$ being a locally bounded weak solution to problem \eqref{1.1} in the sense of Definition \ref{def1}, we have, for any $R\in(0,1)$, $r\in(0,R)$, $d\in\left(\frac{1}{4}(R-r) \right] $, $B_{R+d}\equiv B_{R+d}(x_0)\subset\subset\Omega$, $h\in \overline{B_d}\backslash\left\lbrace 0\right\rbrace $ and $\sigma\in(0,p)$,
	\begin{align*}
		&\quad\int_{B_r}\int_{B_r}\frac{\left| J_{\frac{q}{p}+1}\left( \tau_hu(x)\right)-J_{\frac{q}{p}+1}\left( \tau_hu(y)\right) \right|^p }{|x-y|^{N+\frac{s_pp^2}{2}+\sigma\left( 1-\frac{p}{2}\right) }}\,dxdy\\
		&\le \left( \frac{C}{(R-r)^{N+s_pp+1}}\right)^\frac{p}{2}\left[ \|\nabla u\|^q_{L^q(B_r)}\left( \int_{B_r}|\tau_hu|^q\,dx\right)^\frac{q-p}{q} \right]^{1-\frac{p}{2}}\\
		&\quad\times\left(\int_{B_R}|\tau_hu|^{q-p+1}\,dx+\int_{B_R}|\tau_hu|^q\,dx+|h|\mathcal{T}^{p-1}\int_{B_R}|\tau_hu|^{q-p+1}\,dx\right) ^\frac{p}{2}
	\end{align*}
	with
	\begin{align*}
		\mathcal{T}:=\|u\|_{L^\infty(B_{R+d}(x_0))}+\mathrm{Tail}(u;x_0,R+d).
	\end{align*}
\end{lemma}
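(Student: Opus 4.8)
The plan is to observe that Lemma \ref{lem20} is essentially Proposition \ref{pro9} specialized to the situation in which the weak solution already lies in $W^{1,q}_{\mathrm{loc}}(\Omega)$, so the main task is to replace the seminorm $[u]_{W^{\gamma,q}(B_r)}$ appearing on the right-hand side of Proposition \ref{pro9} by the gradient norm $\|\nabla u\|_{L^q(B_r)}$. First I would revisit the single place in the proof of Proposition \ref{pro9} where the factor $[u]_{W^{\gamma,q}(B_r)}$ enters, namely the estimate \eqref{9.9} for the term $P_{1,3}$. There, Hölder's inequality with exponents $\left(\frac{q}{p},\frac{q}{q-p}\right)$ was used to split $P_{1,3}$ into a factor controlled by $[u]_{W^{\gamma,q}(B_r)}^p$ and a factor controlled by $\left(\int_{B_r}|\tau_hu|^q\,dx\right)^{\frac{q-p}{q}}$, with the Gagliardo kernel $|x-y|^{-N-\gamma q}$ on the first factor and $|x-y|^{-N-(\sigma-\gamma p)\frac{q}{q-p}}$ on the second; the integrability of the latter required $\sigma<\gamma p$.

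The key modification is to perform this splitting with $\gamma$ replaced by $1$: since $u\in W^{1,q}_{\mathrm{loc}}(\Omega)$, I would apply the embedding $W^{1,q}\hookrightarrow W^{\gamma,q}$ of Lemma \ref{embed} with exponent $\gamma$ chosen so that the kernel $|x-y|^{-N-\gamma q}$ is handled, converting
\begin{align*}
	\left(\int_{B_r}\int_{B_r}\frac{\left(|u_h(x)-u_h(y)|+|u(x)-u(y)|\right)^q}{|x-y|^{N+\gamma q}}\,dxdy\right)^{\frac p q}\le C(N,q)\,R^{(1-\gamma)p}\|\nabla u\|_{L^q(B_r)}^p
\end{align*}
up to the $R$-dependent constant which can be absorbed since $R<1$. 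The remaining factor $\left(\int_{B_r}\int_{B_r}\frac{|\tau_hu(x)|^q+|\tau_hu(y)|^q}{|x-y|^{N+(\sigma-\gamma p)\frac{q}{q-p}}}\,dxdy\right)^{\frac{q-p}{q}}$ is radially integrable exactly when $(\sigma-\gamma p)\frac{q}{q-p}<0$, i.e. $\sigma<\gamma p$; choosing $\gamma$ close to $1$ this is guaranteed for every $\sigma<p$, which is precisely the hypothesis $\sigma\in(0,p)$ in Lemma \ref{lem20} (as opposed to $\sigma<\gamma p$ in Proposition \ref{pro9}). With this replacement in hand, the entire chain of estimates \eqref{9.3}--\eqref{9.10} goes through verbatim — the treatment of $I$ (the $1$-growth term) and of $P_{1,1}$, $P_{1,2}$, $P_2$ is unchanged — and collecting terms yields exactly the claimed inequality with $\|\nabla u\|_{L^q(B_r)}^q$ in place of $[u]_{W^{\gamma,q}(B_r)}^q$.

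The main obstacle, which is minor, is bookkeeping the dependence of the constant: one must check that after fixing $\gamma=\gamma(\sigma,p,q)\in\left(\frac{\sigma}{p},1\right)$ the resulting constant depends only on $N,p,q,s_1,s_p,\sigma$ and not on any auxiliary differentiability index — this is clear since the only new input is $\gamma$, which is itself a function of $\sigma,p$. A secondary point is that one should confirm $J_{q-p+2}(\tau_hu)\eta^p$ still lies in the correct test-function space; since $u\in W^{1,q}_{\mathrm{loc}}(\Omega)\subset W^{s_p,p}_{\mathrm{loc}}(\Omega)\cap W^{s_1,1}_{\mathrm{loc}}(\Omega)$ and $u$ is locally bounded, this follows exactly as in Proposition \ref{pro9}. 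Hence the proof reduces to rerunning the argument of Proposition \ref{pro9} with the one-line substitution described above, and I would present it as such: "The proof is identical to that of Proposition \ref{pro9}, except that in the estimate of $P_{1,3}$ one applies Lemma \ref{embed} to bound the Gagliardo seminorm by $\|\nabla u\|_{L^q(B_r)}$, which relaxes the constraint $\sigma<\gamma p$ to $\sigma<p$."
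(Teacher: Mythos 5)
Your proposal is correct and matches the paper's approach: the paper likewise applies Lemma \ref{embed} to bound $[u]_{W^{\gamma,q}(B_r)}$ by $\|\nabla u\|_{L^q(B_r)}$ and chooses $\gamma$ strictly between $\sigma/p$ and $1$ (the paper makes the concrete choice $\gamma=\frac{p+\sigma}{2p}$, i.e.\ the midpoint) so that the constraint $\sigma<\gamma p$ of Proposition \ref{pro9} is relaxed to $\sigma<p$. The only cosmetic difference is that the paper invokes Proposition \ref{pro9} as a black box and post-processes its conclusion via \eqref{embedd}, whereas you propose inserting the embedding directly into step \eqref{9.9}; the two are equivalent.
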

\begin{proof}
	After a direct application of Lemma \ref{embed} to the inequality obtained in Proposition \ref{pro9}, we derive the desired result. More precisely, we set $\gamma:=\frac{p+\sigma}{2p}$ in Proposition \ref{pro9} and take \eqref{embedd} into consideration to have
	\begin{align*}
		&\quad\int_{B_r}\int_{B_r}\frac{\left| J_{\frac{q}{p}+1}\left( \tau_hu(x)\right)-J_{\frac{q}{p}+1}\left( \tau_hu(y)\right) \right|^p }{|x-y|^{N+\frac{s_pp^2}{2}+\sigma\left( 1-\frac{p}{2}\right) }}\,dxdy\\
		&\le \left( \frac{C}{(R-r)^{N+s_pp+1}}\right)^\frac{p}{2}\left[ [u]^p_{W^{\gamma,q}(B_r)}\left( \int_{B_r}|\tau_hu|^q\,dx\right)^\frac{q-p}{q} \right]^{1-\frac{p}{2}}\\
		&\quad\times\left(\int_{B_R}|\tau_hu|^{q-p+1}\,dx+\int_{B_R}|\tau_hu|^q\,dx+|h|\mathcal{T}^{p-1}\int_{B_R}|\tau_hu|^{q-p+1}\,dx\right) ^\frac{p}{2}\\
		&\le \left( \frac{C}{(R-r)^{N+s_pp+1}}\right)^\frac{p}{2}\left[\|\nabla u\|^q_{L^q(B_r)}\left( \int_{B_r}|\tau_hu|^q\,dx\right)^\frac{q-p}{q} \right]^{1-\frac{p}{2}}\\
		&\quad\times\left(\int_{B_R}|\tau_hu|^{q-p+1}\,dx+\int_{B_R}|\tau_hu|^q\,dx+|h|\mathcal{T}^{p-1}\int_{B_R}|\tau_hu|^{q-p+1}\,dx\right) ^\frac{p}{2},
	\end{align*}
	where $C$ takes the form of
	\begin{align*}
		C=\frac{C(N,p,q,s_1,s_p)}{(p-\sigma)^{\frac{q-p}{q}+\frac{p}{2}}}.
	\end{align*}
\end{proof}

Thanks to Lemma \ref{lem21}, which establishes the relationship between second-order differences and Sobolev regularity, we can show the regularity of the gradient of weak solutions.

\begin{proof}[Proof of Corollary \ref{corG}]
	We set $\widetilde{R}:=\frac{R+r}{2}$, $\tilde{r}:=\frac{1}{3}r+\frac{2}{3}\widetilde{R}$ and $d:=\frac{1}{6}(\widetilde{R}-r)$. Applying Lemmas \ref{lem16} and \ref{lem4} and following the method used in proof of Lemma \ref{lem11}, we can deduce  an inequality similar to \eqref{11.30}, that is,
	\begin{align*}
		\int_{B_{\tilde{r}}}\left|\tau_h\left(\tau_h u\right)\right|^q d x \leq \frac{C\left(\mathcal{T}+\|\nabla u\|_{L^q(B_{\widetilde{R}})}+1\right)^q}{(R-r)^{N+2q+1}}|h|^{\left(q-\frac{p}{2}\right)+\sigma\left(1-\frac{p}{2}\right)+\frac{s_p p^2}{2}}.
	\end{align*}
	Choosing $\sigma:=\frac{\left[\alpha+\frac{p}{2q}(s_pp-p+1)\right]+p-s_pp^2}{2-p}<p$ in the previous inequality, we have
	\begin{align*}
		\int_{B_{\tilde{r}}}\left|\tau_h\left(\tau_h u\right)\right|^q d x \leq \frac{C\left(\mathcal{T}+\|\nabla u\|_{L^q(B_{\widetilde{R}})}+1\right)^q}{(R-r)^{N+2q+1}}|h|^{\tilde{\alpha}q+q}
	\end{align*}
	with $\tilde{\alpha}:=\frac{\alpha+\frac{p}{2q}(s_pp-p+1)}{2}$. As a result of application of Lemma \ref{lem21}, we conclude
	\begin{align*}
		\nabla u\in \left( W^{\alpha,q}(B_r)\right)^N 
	\end{align*}
	for any $\alpha\in \left( 0,\frac{p}{2q}(s_pp-p+1)\right) $ and there exists a constant $C=C(N, p, q, s_1, s_p, \alpha)$ such that
	\begin{align*}
		[\nabla u]^{q}_{W^{\alpha,q}(B_r)}&\le C\left[ \frac{\left( \mathcal{T}+\|\nabla u\|_{L^q(B_{\widetilde{R}})}+1\right)^q}{(R-r)^{N+s_pp+1}}+\frac{\|\nabla u\|^q_{L^q(B_{\widetilde{R}})}}{(R-r)^{\left( \frac{p}{2q}(s_pp-p+1)+1\right) q}}\right] \\
		&\le \frac{C\left( \mathcal{T}+\|\nabla u\|_{L^q(B_{\widetilde{R}})}+1\right)^q}{(R-r)^{N+2q+2}}.
	\end{align*}
	By Theorem \ref{th26}, one can estimate the term $\|\nabla u\|^q_{L^q(B_{\widetilde{R}})}$ to deduce
	\begin{align*}
		[\nabla u]^{q}_{W^{\alpha,q}(B_r)}\le \frac{C\left( \mathcal{T}+[u]_{W^{s_p,p}(B_R)}+1\right)^q}{(R-r)^{\kappa}},
	\end{align*}
which completes the proof.
\end{proof}

\subsection*{Acknowledgments}
This work was supported by the National Natural Science Foundation of China (No. 12471128).

\subsection*{Conflict of interest} The authors declare that there is no conflict of interest. We further declare that this manuscript has no associated data.
	
\subsection*{Data availability}
No data was used for the research described in the article.

\end{document}